\numberwithin{equation}{section}
\newtheorem{theorem}{Theorem}[section]
\newtheorem{lemma}[theorem]{Lemma}
\newtheorem{proposition}[theorem]{Proposition}
\theoremstyle{definition}
\newtheorem{example}[theorem]{Example}
\newtheorem*{question}{Question}
\newcommand{\beas}{\begin{eqnarray*}}
\newcommand{\eeas}{\end{eqnarray*}}
\newcommand{\bes} {\begin{equation*}}
\newcommand{\ees} {\end{equation*}}
\newcommand{\be} {\begin{equation}}
\newcommand{\ee} {\end{equation}}
\newcommand{\bea} {\begin{eqnarray}}
\newcommand{\eea} {\end{eqnarray}}
\newcommand{\beals} {\begin{align*}}
\newcommand{\eeals} {\end{align*}}
\newcommand{\beal} {\begin{align}}
\newcommand{\eeal} {\end{align}}
\newcommand{\hol}{\mathcal {O}}
\newcommand{\rt}{\mathcal{\widetilde{R}}}
\newcommand{\cont}{\mathcal C}
\newcommand{\C}{\mathbb C}
\newcommand{\Cn}{{\mathbb{C}^n}}
\newcommand{\N}{\mathbb N}
\newcommand{\Z}{\mathbb Z}
\newcommand{\R}{\mathbb{R}}
\newcommand{\Le}{\mathbb L}
\newcommand{\ltt}{\left(}
\newcommand{\rtt}{\right)}
\newcommand{\ptt}{\check{p}}
\newcommand{\Om}{\Omega}
\newcommand{\zbar}{\overline z}
\newcommand\smpartl[2]{\frac{\partial{#1}}{\partial{#2}}}
\newcommand\partl[2]{\dfrac{\partial{#1}}{\partial{#2}}}
\begin{document}

\begin{abstract}
   {The space of Laplace transforms of holomorphic Hardy-space functions have been characterized as weighted Bergman spaces of entire functions in two cases: that of planar convex domains (Lutsenko--Yumulmukhametov, 1991), and that of strongly convex domains in higher dimensions (Lindholm, 2002). In this paper, we establish such a Paley--Wiener result for a class of (weakly) convex Reinhardt domains in $\C^2$ that are well-modelled by the so-called egg domains. We consider Hardy spaces on these domains with respect to a canonical choice of boundary Monge--Amp{\`e}re measure. This class of domains was introduced by Barrett--Lanzani (2009) to study the $L^2$-boundedness of the Leray transform in the absence of either strongly convexity or $\cont^2$-regularity. The boundedness of the Leray transform plays a crucial role in understanding the image of the Laplace transform. As a supplementary result, we expand the known class of convex Reinhardt domains for which the Leray transform is $L^2$-bounded (with respect to the aforementioned choice of boundary measure).
    Finally, we also produce an example to show that the Lutsenko--Yumulmukhametov result cannot be expected to generalize to all convex domains in higher dimensions.}
\end{abstract}

\title{The Laplace and Leray transforms on some (weakly) convex domains in $\C^2$}
\author{Agniva Chatterjee}
\keywords{Laplace transform, Leray transform, Paley-Wiener theorems, Hardy spaces.}
\subjclass[2020]{32A26. 44A10. 30H10. 32A36.}
\address{Department of Mathematics, Indian Institute of Science, Bangalore 560012, India}
\email{agnivac@iisc.ac.in}
\maketitle

\section{Introduction}

Let $\mathcal X$ be a topological vector space of functions on a subset of $\Cn$ such that all exponential functions of the form $\zeta\mapsto e^{\langle z,\zeta\rangle}$, $z\in\Cn$, belong to $\mathcal X$. It is sometimes useful to identify $\mathcal X'$, the dual space of $\mathcal X$, with a space of functions on $\Cn$ via the Laplace transform
    \bes
        \mathcal L(f)(z)=f(e^{\langle z,\cdot\rangle}),\quad f\in\mathcal X'.
    \ees
Some classical examples of $\mathcal X'$, for which this identification has been used to great effect, include the space of $L^2$-functions with compact convex support in $\R^n$ (the Paley--Wiener theorem), the space of distributions with compact convex support in $\R^n$ (the Paley-Wiener-Schwartz theorem), the space of hyperfunctions with convex compact support in $\R^n$ (the Paley-Wiener-Martineau theorem), and the space of analytic functionals carried by compact convex sets in $\C^n$ (the P{\'o}lya-Martineau-Ehrenpreis theorem). More recently, results in this spirit have appeared in the context of interpolation and sampling, \cite{Ro07, Be97}, and in the proofs of certain geometric inequalities for convex bodies, \cite{MR22, Be22}.

\subsection{Laplace transforms of holomorphic Hardy functions} In this paper, we consider the case when $\mathcal X=\mathcal X'$ is a Hardy space of holomorphic functions on a convex Reinhardt domain of the type considered by Barrett--Lanzani \cite{BaLa09}. Our motivation is as follows. Given a bounded convex domain $\Om\subset\Cn$ and a Radon measure $\mu$ on $b\Om$, let $L^2(b\Om,\mu)$ denote the space of complex-valued functions on $b\Om$ that are square-integrable with respect to $\mu$, and $\mathcal A(\Om)$ denote the space of continuous $\C$-valued functions on $\overline\Om$ that are holomorphic on $\Om$. Consider the Laplace transform $\mathcal L$ on $L^2(b\Om,\mu)$ given by
    \bes
        \mathcal L:f\mapsto F(z)=\int_{b\Om}  \overline{f(\zeta)}\,e^{\langle z,\zeta\rangle}\,d\mu(\zeta),\quad  \forall f\in L^2(b\Om,\mu).
    \ees
Since $\mathcal L$ annihilates the orthogonal complement of
    \bes
        \mathcal H^2(\Om,\mu)=\text{the $L^2(\mu)$-closure of $\mathcal A(\Om)|_{b\Om}$ in }L^2(b\Om,\mu),
    \ees
we might as well consider the restriction of $\mathcal L$ to $\mathcal H^2(\Om,\mu)$ (also denoted by $\mathcal L$). 

The P{\'o}lya-Martineau-Ehrenpreis theorem gives a necessary condition for an entire function to be the Laplace transform of some $f\in\mathcal H^2(\Om,\mu)$. In some cases, it is possible to completely characterize the range of $\mathcal L$ as a weighted Bergman space of the form
        \bes
A^2(\Cn,\omega)=\left\{F\in\hol(\Cn):\Vert F\Vert_\omega^2=\int_\Cn|F|^2d\omega<\infty\right\}      \text{ with norm }\Vert\cdot\Vert_\omega.
    \ees
One advantage of shifting the analysis to a weighted Bergman space is the availability of H{\"o}rmander's $L^2$-theory to construct functions with desired properties. The following rather comprehensive result completely tackles the case of planar convex domains when $\mu$ is the arc-length measure; {see \cite{LuYl91}}.

\begin{theorem}[Lutsenko--Yumulmukhametov]\label{th:Lu and Yul}   Let $\Om\subset\C$ be a bounded convex domain, and $\sigma_\Om$ be the arc-length measure on $b\Om$. Let $H_\Om$ denote the support function of $\Om$ (see \eqref{eq:supp}). Then, $\mathcal L$ is a normed space isomorphism between $\mathcal H^2(\Om,\sigma_\Om)$ and $A^2(\C,\widetilde\omega_{\Om})$, where 
    \be\label{E:LY}
        \widetilde\omega_\Om(z)=\Vert e^{\langle z,\cdot\rangle}\Vert^{-2}_{L^2(\sigma_\Om)}\Delta H_\Om(z),\quad z\in\C,
        \ee
        and $\Delta H_\Om$ is the Riesz measure of the convex function $H_\Om$.
\end{theorem}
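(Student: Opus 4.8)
The plan is to realize $\mathcal L$ as the analysis operator of a continuous tight frame on $\mathcal H^2(\Om,\sigma_\Om)$ and then to identify its range.

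\smallskip

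\noindent\textbf{Reformulation as a frame identity.} Write $e_z$ for the entire function $\zeta\mapsto e^{\langle z,\zeta\rangle}$. For every $z\in\C$ it belongs to $\mathcal H^2(\Om,\sigma_\Om)$, and since differentiating $z\mapsto e_z$ recovers all monomials in $\zeta$ — polynomials being dense in $\mathcal A(\Om)|_{b\Om}$ by convexity of $\Om$ — the family $\{e_z\}_{z\in\C}$ is total in $\mathcal H^2(\Om,\sigma_\Om)$, so $\mathcal L$ is injective. Now $\mathcal L f(z)=\langle e_z,f\rangle_{L^2(\sigma_\Om)}$ is entire and satisfies $|\mathcal L f(z)|=|\langle f,e_z\rangle|$, while $d\widetilde\omega_\Om=\Vert e_z\Vert^{-2}_{L^2(\sigma_\Om)}\,d(\Delta H_\Om)$; hence the isometry half of the theorem is exactly the statement that $\{\Vert e_z\Vert^{-1}e_z\}_{z\in\C}$ is a normalized tight frame for $\mathcal H^2(\Om,\sigma_\Om)$ with respect to the measure $\Delta H_\Om$, i.e.
\[
\int_\C\bigl|\bigl\langle f,\ \Vert e_z\Vert^{-1}e_z\bigr\rangle\bigr|^2\,d(\Delta H_\Om)(z)=\Vert f\Vert^2_{\mathcal H^2(\Om,\sigma_\Om)}\qquad\text{for all }f\in\mathcal H^2(\Om,\sigma_\Om).
\]
An equivalent — and more convenient — reformulation is that the Szeg\H{o} kernel of $\mathcal H^2(\Om,\sigma_\Om)$ equals $S(\zeta,\eta)=\int_\C e^{\langle z,\zeta\rangle}\,\overline{e^{\langle z,\eta\rangle}}\,\Vert e_z\Vert^{-2}\,d(\Delta H_\Om)(z)$. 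As $\mathcal H^2(\Om,\sigma_\Om)$ is a reproducing kernel space on which the exponentials are total, it suffices by density to verify the scalar identities $\int_\C\langle e_w,e_z\rangle\langle e_z,e_{w'}\rangle\Vert e_z\Vert^{-2}\,d(\Delta H_\Om)(z)=\langle e_w,e_{w'}\rangle$ for $w,w'\in\C$.

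\smallskip

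\noindent\textbf{Proof of the frame identity.} Here I would pass to the geometry of the support function. For smooth, strictly convex $\Om$ the Gauss map identifies $b\Om$ with $S^1$ via the outer-normal angle $\psi\mapsto\zeta(\psi)$, carrying $\sigma_\Om$ to the surface-area measure $\rho(\psi)\,d\psi$, where $\rho=h+h''$ is the radius of curvature and $h(\psi)=H_\Om(e^{i\psi})$; in polar coordinates $z=re^{i\theta}$ one computes $\Delta H_\Om=\rho(\theta)\,dr\,d\theta$ on $\C\setminus\{0\}$. Thus in each direction $\theta$ the $\C$-side integral is an integral against $\rho(\theta)\,dr$, while $\Vert e_{re^{i\theta}}\Vert^2=\int_{b\Om}e^{2r\,\rea\langle e^{i\theta},\zeta\rangle}\,d\sigma_\Om$ concentrates, as $r\to\infty$, at the boundary point of normal angle $\theta$, producing $e^{2rh(\theta)}$ times a factor governed by that very curvature $\rho(\theta)$. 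The mechanism of the proof is that this curvature factor cancels the weight $\rho(\theta)$, reducing the identity to a one-parameter family of elementary integrals in $r$ evaluable in closed form; the prototype is $\Om=\mathbb D$, where $H_\Om(z)=|z|$, $\Delta H_\Om=(2\pi|z|)^{-1}\,dA$, the monomials are orthogonal, $\Vert e_z\Vert^2=2\pi I_0(2|z|)$, and the identity reduces to a classical-type integral identity for $\int_0^\infty r^{2n}I_0(2r)^{-1}\,dr$. A general — possibly non-smooth, or merely weakly convex — $\Om$ is reached by Hausdorff approximation by smooth strictly convex domains, checking that both sides depend continuously on $\Om$; one should bear in mind that $\Delta H_\Om$ genuinely acquires a singular part in general (line masses along edge-normals for polygons, say). \emph{Making the curvature cancellation an exact, rather than merely asymptotic, identity and controlling the non-smooth limit is the main obstacle.}

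\smallskip

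\noindent\textbf{Surjectivity.} Being a conjugate-linear isometry out of a complete space, $\mathcal L$ has closed range, and the reproducing property gives the clean relation $\mathcal L\bigl(S(\cdot,\eta)\bigr)=\bigl[z\mapsto e^{\langle z,\eta\rangle}\bigr]$ for $\eta\in\Om$. Since $\{S(\cdot,\eta):\eta\in\Om\}$ is total in $\mathcal H^2(\Om,\sigma_\Om)$, it follows that $\mathrm{ran}\,\mathcal L$ is precisely the closed span in $A^2(\C,\widetilde\omega_\Om)$ of the exponentials $\{z\mapsto e^{\langle z,\eta\rangle}:\eta\in\Om\}$; differentiating in $\eta$ (after a harmless translation making $0\in\Om$), this span contains every polynomial. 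Hence surjectivity of $\mathcal L$ follows from the density of the polynomials in $A^2(\C,\widetilde\omega_\Om)$, which I would deduce from the rapid decay and regularity of the weight $\widetilde\omega_\Om$ — either directly, via norm-convergence of Taylor truncations, or by solving a $\bar\partial$-equation with a weight adapted to $\widetilde\omega_\Om$ through H\"ormander's $L^2$-estimates, which is where the geometry of $\Om$ enters quantitatively. Together with the isometry this shows that $\mathcal L$ is a normed space isomorphism from $\mathcal H^2(\Om,\sigma_\Om)$ onto $A^2(\C,\widetilde\omega_\Om)$.
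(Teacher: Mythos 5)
This statement is quoted in the paper as a known theorem of Lutsenko--Yumulmukhametov (cited as \cite{LuYl91}); the paper itself supplies no proof, so your attempt has to stand on its own, and as written it does not. The central problem is that you have replaced the actual assertion — that $\mathcal L$ is a \emph{normed space isomorphism}, i.e. $\Vert\mathcal Lf\Vert_{\widetilde\omega_\Om}\approx\Vert f\Vert_{\sigma_\Om}$ with two-sided constants — by an exact isometry, reformulated as a tight continuous frame / exact Szeg\H{o}-kernel resolution identity. That stronger statement is false (the paper even remarks, in its section on operator-norm bounds, that $\mathcal L$ is only an isomorphism, not an isometric one). Already for $\Om=\{|\zeta|<1\}$ the monomials are orthogonal and the ratio $\Vert\mathcal L(\zeta^n)\Vert^2_{\widetilde\omega_\Om}/\Vert\zeta^n\Vert^2_{\sigma_\Om}$ reduces, up to a fixed constant, to $(n!)^{-2}\int_0^\infty r^{2n}I_0(2r)^{-1}\,dr$; by the asymptotics of $I_0$ this is bounded above and below uniformly in $n$, but it is not independent of $n$, so the family $\{\Vert e_z\Vert^{-1}e_z\}_{z\in\C}$ is not a tight frame with respect to $\Delta H_\Om$ and the scalar identities you propose to verify do not hold. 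In other words, the step you yourself single out as ``the main obstacle'' — promoting the Laplace-method curvature cancellation from an asymptotic comparison to an exact identity — is not merely unproved in your write-up; it cannot be proved, and since it carries the entire two-sided estimate, the core of the argument is missing. (Note also that the cancellation is only partial even asymptotically: for smooth strictly convex $\Om$ one gets $\Vert e_{re^{i\theta}}\Vert^2_{\sigma_\Om}\approx e^{2rh(\theta)}\rho(\theta)^{1/2}r^{-1/2}$ while $\Delta H_\Om=\rho(\theta)\,dr\,d\theta$, so a factor $\rho(\theta)^{1/2}$ survives; this is harmless for two-sided bounds but fatal for an exact identity.)

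A second substantive gap is the passage from smooth strictly convex domains to a general bounded convex $\Om$ ``by Hausdorff approximation, checking that both sides depend continuously on $\Om$.'' Both the Hardy norm and the weight $\widetilde\omega_\Om$ (in particular the singular part of $\Delta H_\Om$, e.g. line masses for polygons) change with $\Om$, and what is needed is uniformity of the comparability constants along the approximating sequence together with a limiting argument identifying the limit spaces; nothing of the sort is indicated, and this is exactly where the difficulty of the general (weakly) convex case lies — it is what separates the Lutsenko--Yumulmukhametov theorem from the earlier strongly convex results of Katsnel'son and Lyubarskii. The surjectivity half is closer to workable (closed range needs only the two-sided bounds, the relation $\mathcal L(S(\cdot,\eta))=e^{\langle\cdot,\eta\rangle}$ is correct when the Szeg\H{o} reproducing structure and boundary values are available, and density of polynomials in $A^2(\C,\widetilde\omega_\Om)$ is plausible via dilations), but it too rests on the unproven lower bound for $\mathcal L$ and on reproducing-kernel facts for $\mathcal H^2(\Om,\sigma_\Om)$ on an arbitrary bounded convex planar domain that you assert rather than establish.
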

Special cases of this theorem were previously proved by Levin (polygons), Likht (discs), Katsnel'son and Lyubarskii (strongly convex domains); see \cite[Appendix I]{Le64}, \cite{Lik64}, \cite{Ka65} and \cite{Ly88}, respectively. The case of strongly convex domains in $\Cn$ is also well-understood. 

\begin{theorem}[Lindholm]\label{th:lindholm} Let $\Om\subset \Cn$ be a bounded strongly convex domain, and $\sigma_\Om$ be the surface area measure on $b\Om$. Let $H_\Om$ denote the support function of $\Om$. Then, $\mathcal L$ is a normed space isomorphism between $\mathcal H^2(\Om,\sigma_\Om)$ and  $A^2(\Cn,\widetilde\omega_\Om)$, where 
\be\label{E:lindholm}
\widetilde\omega_\Om(z)=\Vert e^{\langle z,\cdot\rangle}\Vert^{-2}_{L^2(\sigma_\Om)}\left(dd^c H_\Om\right)^n(z),\quad z\in\Cn,
\ee
and $(dd^c H_\Om)^n$ is the Monge--Amp{\`e}re measure of $H_\Om$.
\end{theorem}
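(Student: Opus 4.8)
The plan is to identify $\mathcal L$ with the analysis operator of a continuous frame for $\mathcal H^2(\Om,\sigma_\Om)$, indexed by $z\in\Cn$ and weighted by the Monge--Amp\`ere measure $(dd^cH_\Om)^n$, and to derive the theorem from a single two-sided estimate for the corresponding frame operator. Writing $\langle g,h\rangle=\int_{b\Om}g\overline h\,d\sigma_\Om$ and $u_z=e^{\langle z,\cdot\rangle}/\|e^{\langle z,\cdot\rangle}\|_{L^2(\sigma_\Om)}$, we have $\mathcal Lf(z)=\langle e^{\langle z,\cdot\rangle},f\rangle$ (note $\mathcal L$ is conjugate-linear in $f$), hence
\[
\|\mathcal Lf\|_{\widetilde\omega_\Om}^2=\int_{\Cn}|\langle f,u_z\rangle|^2\,(dd^cH_\Om)^n(z)=\langle Tf,f\rangle,\qquad Tf:=\int_{\Cn}\langle f,u_z\rangle\,u_z\,(dd^cH_\Om)^n(z).
\]
The first ingredient is the asymptotics of the exponential norm. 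By Laplace's method, using strong convexity --- so that $\zeta\mapsto\rea\langle z,\zeta\rangle$ attains its maximum over $\overline\Om$ at a single point of $b\Om$, with nondegenerate Hessian along $b\Om$ --- one obtains $\|e^{\langle z,\cdot\rangle}\|_{L^2(\sigma_\Om)}^2\asymp|z|^{-(n-\frac12)}e^{2H_\Om(z)}$ as $|z|\to\infty$, the leading coefficient being governed by the curvature of $b\Om$ at the maximizing point. Together with the homogeneity of $H_\Om$ (whose complex Hessian is homogeneous of degree $-1$), this shows $\widetilde\omega_\Om$ is a finite measure, so $A^2(\Cn,\widetilde\omega_\Om)$ is a bona fide reproducing kernel Hilbert space of entire functions.

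With these normalizations the theorem amounts to: (i) $T\asymp I$ on $\mathcal H^2(\Om,\sigma_\Om)$, which at once gives that $\mathcal L$ is bounded, bounded below, injective, of closed range, and --- being exactly the assertion $\|\mathcal Lf\|\asymp\|f\|$ --- a normed-space isomorphism onto its range; and (ii) $\mathcal L$ has dense range. For (ii) I would invoke the explicit formal adjoint $\mathcal L^*G(\zeta)=\int_{\Cn}e^{\langle z,\zeta\rangle}\overline{G(z)}\,\widetilde\omega_\Om(z)\,dV(z)$: for $\zeta$ in a compact subset of $\Om$ one has $\rea\langle z,\zeta\rangle\le H_\Om(z)-\delta|z|$, so the exponential growth of $|e^{\langle z,\zeta\rangle}|$ and of $|G(z)|$ is beaten by the $e^{-2H_\Om(z)}$-decay of $\widetilde\omega_\Om$, and $\mathcal L^*G$ extends to a holomorphic function on $\Om$ whose boundary values represent it in $\mathcal H^2(\Om,\sigma_\Om)$. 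If $\mathcal L^*G=0$, this holomorphic function vanishes identically on $\Om$, so its Taylor coefficients at an interior point vanish, i.e.\ $\int_{\Cn}\overline{G(z)}z^\alpha\,\widetilde\omega_\Om(z)\,dV(z)=0$ for every multi-index $\alpha$; since polynomials are dense in $A^2(\Cn,\widetilde\omega_\Om)$ (the decay of $\widetilde\omega_\Om$ is more than enough), $G=0$. Hence $\mathcal L^*$ is injective, $\mathcal L$ has dense range, and with (i) it is onto.

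The crux is (i), i.e.\ the two-sided bound for the kernel $\mathcal K(\eta,\zeta)=\int_{\Cn}\|e^{\langle z,\cdot\rangle}\|_{L^2(\sigma_\Om)}^{-2}\,e^{\langle z,\eta\rangle}\,\overline{e^{\langle z,\zeta\rangle}}\,(dd^cH_\Om)^n(z)$ of $T$. The upper bound I would get from Schur's test: applying the Laplace asymptotics to the numerator $e^{\langle z,\eta\rangle+\overline{\langle z,\zeta\rangle}}$ --- whose exponent equals $2H_\Om(z)$ only when $\eta=\zeta$ and decays off the diagonal --- shows $|\mathcal K(\eta,\zeta)|$ is dominated by a near-diagonal kernel controlled by the Szeg\H{o} kernel $S_\Om(\zeta,\zeta)^{1/2}$, an admissible Schur weight. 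The lower bound is the genuinely hard part: one must show $\mathcal K$ is not merely dominated by, but comparable to, the Szeg\H{o} kernel of $\mathcal H^2(\Om,\sigma_\Om)$ near the diagonal. This is where strong convexity is indispensable: nondegeneracy of the Hessian of $\rea\langle z,\cdot\rangle|_{b\Om}$ makes the inner $z$-integral genuinely Gaussian, and the Hessian determinant produced by the stationary-phase evaluation is cancelled exactly by the factor $(dd^cH_\Om)^n$ --- this is the Legendre/Gauss-map duality between $b\Om$ and the graph of $H_\Om$. Equivalently, under the map $z\mapsto$ (the point of $b\Om$ maximizing $\rea\langle z,\cdot\rangle$, essentially the complex gradient of $H_\Om$), $(dd^cH_\Om)^n$ pushes forward to a measure comparable to $\sigma_\Om$, and $\mathcal K$ reduces, to leading order, to an approximate reproducing formula on $b\Om$. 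Making this rigorous \emph{uniformly in $(\eta,\zeta)$}, with honest control of the error terms both near and away from the diagonal, is the main obstacle; I expect it to require the full stationary-phase expansion together with good a priori size and off-diagonal decay estimates for the Szeg\H{o} kernel of a strongly convex domain.
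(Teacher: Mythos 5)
Your proposal attempts to prove Lindholm's theorem from scratch, and its decisive step is missing. You reduce the statement to a two-sided bound for the frame operator $T$ (equivalently $\|\mathcal Lf\|_{\widetilde\omega_\Om}\approx\|f\|_{\sigma_\Om}$) plus density of the range, but the lower bound for $T$ --- which is the entire analytic content of the theorem --- is not established: you yourself call it ``the genuinely hard part'' and ``the main obstacle,'' and only state that you \emph{expect} it to follow from a full stationary-phase expansion together with size and off-diagonal decay estimates for the Szeg\H{o} kernel of a strongly convex domain. Several supporting steps are likewise asserted rather than proved: the Schur test for the upper bound needs a verified admissible weight (the claim that $S_\Om(\zeta,\zeta)^{1/2}$ works is not justified), and the surjectivity argument uses density of polynomials in $A^2(\Cn,\widetilde\omega_\Om)$, which for a non-radial weight of the form $e^{-2H_\Om}$ requires proof. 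As written, the proposal is a plausible research program, not a proof.

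For comparison, the paper does not reprove Lindholm's result at all. It quotes Lindholm's original theorem, in which the weight is $\nu_\Om(z)=e^{-2H_\Om(z)}\|z\|^{n-\frac12}\left(dd^cH_\Om\right)^n(z)$, and then shows that the weight $\widetilde\omega_\Om$ in \eqref{E:lindholm} ``works equally well'': Lemma~\ref{le:str cvx weight comp} proves $\Vert e^{\langle z,\cdot\rangle}\Vert^{2}_{\sigma_\Om}\approx e^{2H_\Om(z)}\|z\|^{-(n-\frac12)}$ for $\|z\|>1$ (by an explicit computation on balls and a reduction of the general strongly convex case to the ball via the polar domain and the estimate of \cite{Fo86}), and Proposition~\ref{pr:str cvx weight} handles the comparison of the two Bergman norms over the remaining compact region. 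Your Laplace-method asymptotics for $\Vert e^{\langle z,\cdot\rangle}\Vert^{2}_{\sigma_\Om}$ is exactly the content of that lemma, so that part of your plan is on target; but the correct (and intended) route to the stated theorem is to combine this weight comparison with Lindholm's theorem for $\nu_\Om$, rather than to rebuild the isomorphism ab initio. If you do want a self-contained proof, the missing lower frame bound would have to be carried out in full --- which is essentially redoing Lindholm's work.
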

The measure $\widetilde\omega_\Om$ in \eqref{E:lindholm} is different from the one appearing in Lindholm's original text \cite{Li02}, but we show in Proposition~\ref{pr:str cvx weight} below that $\widetilde\omega_\Om$ works equally well. Theorem~\ref{th:lindholm} raises the 
\begin{question}Are the Laplace transforms of holomorphic Hardy functions on weakly convex domains in $\Cn$ characterized by a weighted Bergman condition?
\end{question}

A natural class of model domains in this context are the so-called egg domains:    \be\label{E:eggs}
        \left\{(z_1,z_2)\in\C^2:a_1|z_1|^{p_1}+a_2|z_2|^{p_2}<1\right\},\quad a_1,a_2\in(0,\infty), p_1,p_2\in(1,\infty).
    \ee
These are also sometimes called complex ellipsoids. 
We show that, instead of the surface area measure $\sigma_\Om$, if one considers the {\em boundary Monge--Amp{\`e}re measure} $\mu_\Om$ associated to the Minkowski functional of $\Om$, see \eqref{eq:defn MA min}, then Theorem~\ref{th:lindholm} generalizes to a class of Reinhardt domains in $\C^2$ that are modelled by egg domains.
Note that the measures $\mu_\Om$ and $\sigma_\Om$ are equivalent for $\Om$ as in Theorems~\ref{th:Lu and Yul} and ~\ref{th:lindholm}, but not for the domains considered in this paper. For some remarks on this particular choice of measure, see Section~\ref{sub:bdy meas}.


\begin{theorem}\label{th:main} Let $\Om\subset \C^2$ be a bounded convex Reinhardt domain that is $\cont^2$-smooth and strongly convex away from $\mathcal Z=\left\{(z_1,z_2)\in\C^2:z_1z_2=0\right\}$. Given $\zeta=(\zeta_1,\zeta_2)\in b\Om\setminus\mathcal Z$, 
let $\kappa_1(\zeta)$, $\kappa_2(\zeta)$, and $\kappa_3(\zeta)$ be the principal curvatures of $b\Om$ at $\zeta$, where the first two correspond to the directions $\left(i\zeta_1,0\right)$ and $\left(0,i\zeta_2\right)$, respectively. Let 
    \be\label{E:main}
        \omega_\Om(z)=\Vert e^{\langle z,\cdot\rangle}\Vert^{-2}_{L^2(\mu_\Om)}\left(dd^c H_\Om\right)^2(z),\quad z\in\C^2.
    \ee
If the function $\kappa_\Om=(\kappa_1\kappa_2)^{-1}\kappa_3$ is uniformly continuous and bounded away from zero on $b\Om\setminus\mathcal Z$, then $\Om$ has $\cont^1$-smooth boundary, and $\mathcal L$ is a normed space isomorphism between $\mathcal H^2(\Om,\mu_\Om)$ and $A^2(\C^2,\omega_\Om)$.
\end{theorem}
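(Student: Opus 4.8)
The plan is to follow the broad architecture of the strongly convex case (Theorem~\ref{th:lindholm} / Lindholm) but to replace every place where strong convexity and $\cont^2$-regularity are used with the weaker geometric hypothesis on $\kappa_\Om$, exploiting Reinhardt symmetry and the ``egg-model'' structure to handle the degeneracy along $\mathcal Z$. First I would establish the boundary regularity claim: from $\kappa_\Om$ bounded away from zero and uniformly continuous on $b\Om\setminus\mathcal Z$, together with convexity and the known $\cont^2$-behavior off $\mathcal Z$, one deduces that the Minkowski functional is $\cont^1$ up to and across $\mathcal Z$ (the outward normal extends continuously), so $\Om$ has $\cont^1$ boundary; this is essentially a computation in the $(s,\theta_1,\theta_2)$ coordinates adapted to Reinhardt domains, comparing the vanishing rates of $\kappa_1,\kappa_2,\kappa_3$ near $z_1z_2=0$. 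Next I would set up the two-sided estimate that is the analytic heart of the matter: a pointwise comparison, uniform in $z\in\C^2$,
\be\label{E:plan-kernel}
    c\,\Vert e^{\langle z,\cdot\rangle}\Vert_{L^2(\mu_\Om)}^2 \;\le\; \frac{(dd^c H_\Om)^2(z)}{\text{(local density of the relevant measure near }\partial H_\Om(z))} \;\le\; C\,\Vert e^{\langle z,\cdot\rangle}\Vert_{L^2(\mu_\Om)}^2,
\ee
i.e.\ that the weight $\omega_\Om$ in \eqref{E:main} is comparable to the push-forward under the gradient map $\nabla H_\Om$ of the boundary measure $\mu_\Om$, so that the reproducing-kernel / Laplace-transform machinery transfers between $\mathcal H^2(\Om,\mu_\Om)$ and $A^2(\C^2,\omega_\Om)$.

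\textbf{Key steps.} The argument would proceed in the following order. (1) Show $\mathcal L$ is bounded from $\mathcal H^2(\Om,\mu_\Om)$ into $A^2(\C^2,\omega_\Om)$: this is the ``easy'' inclusion, following from Cauchy--Schwarz in the integral defining $\mathcal L(f)$ and then integrating against $\omega_\Om$, using that $\omega_\Om$ is normalized exactly by $\Vert e^{\langle z,\cdot\rangle}\Vert_{L^2(\mu_\Om)}^{-2}$ and that $(dd^cH_\Om)^2$ has finite mass (equal to the Euclidean volume of the polar-type body, hence finite since $\Om$ is bounded). (2) Show $\mathcal L$ is injective with closed range and dense range: injectivity is the statement that exponentials are total in $\mathcal H^2(\Om,\mu_\Om)$, which for Reinhardt domains follows from examining monomials $\zeta^\alpha$ and a Fourier-coefficient argument on the torus action; density of the range in $A^2(\C^2,\omega_\Om)$ I would get from an H\"ormander $\bar\partial$-argument, constructing preimages of a dense subclass (e.g.\ finite linear combinations of exponentials, or functions supported near a generic $z_0$) using the $L^2$-estimate with plurisubharmonic weight built from $H_\Om$. (3) The crucial surjectivity/norm-equivalence: given $F\in A^2(\C^2,\omega_\Om)$, reconstruct $f\in\mathcal H^2(\Om,\mu_\Om)$ with $\mathcal L f=F$ and $\Vert f\Vert_{\mu_\Om}\asymp\Vert F\Vert_{\omega_\Om}$. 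Here I would use the boundedness of the Leray transform on $L^2(b\Om,\mu_\Om)$ (guaranteed in this setting by the supplementary result alluded to in the abstract, under exactly the hypothesis that $\kappa_\Om$ is bounded and bounded away from $0$): the Leray transform provides the projection onto $\mathcal H^2$ needed to turn an a priori reconstructed $\bar\partial$-solution into a genuine Hardy function, and its $L^2$-boundedness is what upgrades a one-sided estimate into the two-sided norm equivalence. Throughout, the passage between the geometry of $b\Om$ and the weight on $\C^2$ is mediated by the Legendre-type duality between the Minkowski functional and the support function $H_\Om$, together with the identity $(dd^cH_\Om)^2 = (\nabla H_\Om)_*\big(\text{const}\cdot\mu_\Om\big)$ up to the curvature factor $\kappa_\Om$ — which is precisely why the hypothesis is phrased in terms of $\kappa_\Om$.

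\textbf{Main obstacle.} The hard part will be step (3) near the degeneracy set $\mathcal Z$: the curvature $\kappa_3$ (and hence the Monge--Amp\`ere density) degenerates as $z_1z_2\to 0$, the map $\nabla H_\Om$ is no longer a $\cont^2$-diffeomorphism, and the naive change-of-variables producing \eqref{E:plan-kernel} breaks down there. The egg-domain model is what saves the day: one compares $\Om$ locally near $\mathcal Z$ with an explicit egg domain \eqref{E:eggs} for which all of $\Vert e^{\langle z,\cdot\rangle}\Vert_{L^2(\mu)}^2$, $(dd^cH)^2$, and the relevant asymptotics can be computed by hand (via the triple integrals $\int_0^1\int_0^{2\pi}\int_0^{2\pi}$ over the Reinhardt parameters), and then transfers the comparability \eqref{E:plan-kernel} by the uniform two-sided control on $\kappa_\Om$. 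The second, more technical obstacle is verifying that the Leray transform is actually $L^2(\mu_\Om)$-bounded on this class — this is where the uniform continuity of $\kappa_\Om$ (not just boundedness) enters, to control the Calder\'on--Zygmund-type singular integral with the non-$\cont^2$ boundary; I expect this to be extracted from the Barrett--Lanzani framework \cite{BaLa09} extended to the present domains, and it is the linchpin that makes the Bergman-space characterization possible at all.
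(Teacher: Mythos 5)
Your overall architecture---deduce $\cont^1$-regularity from the behavior of $\kappa_\Om$ in the Reinhardt coordinates, and then run the isomorphism through the $L^2(\mu_\Om)$-boundedness of the Leray transform---does match the paper's route (the paper derives Theorem~\ref{th:main} from Theorem~\ref{th:Leray} and Theorem~\ref{th:main thm 2}, and your regularity argument is essentially the paper's calculus with $\phi'$). However, your step (1) contains a genuine error. The measure $(dd^cH_\Om)^2$ does \emph{not} have finite mass on $\C^2$: since $H_\Om$ is $1$-homogeneous, the coarea-type formula \eqref{eq:cov} with $\psi\equiv 1$ gives $(dd^cH_\Om)^2\ltt\{H_\Om\leq R\}\rtt=\tfrac{R^2}{2}\,\mu_\Om(b\Om)\rightarrow\infty$. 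Hence the Cauchy--Schwarz bound $|\mathcal Lf(z)|^2\leq\Vert f\Vert^2_{\mu_\Om}\Vert e^{\langle z,\cdot\rangle}\Vert^2_{\mu_\Om}$, multiplied by $\omega_\Om$ and integrated, only produces $\Vert\mathcal Lf\Vert^2_{\omega_\Om}\leq\Vert f\Vert^2_{\mu_\Om}\int_{\C^2}(dd^cH_\Om)^2=\infty$; the ``easy inclusion'' is not easy. In the paper even the upper bound requires the Leray transform: combining \eqref{eq:coeff of power series}, Lemma~\ref{le:series Bergman space} and \eqref{eq:Lmn}, one gets $\Vert\mathcal Lf\Vert^2_{\nu_\Om}\approx\sum_{(m_1,m_2)}|a_{m_1,m_2}|^2\,\Vert\Le_{m_1,m_2}\Vert^2_{\mu_\Om}\int_0^1 r_1(s)^{2m_1}r_2(s)^{2m_2}ds$ (and $\nu_\Om\approx\omega_\Om$ off a compact set under the hypothesis), so boundedness of $\mathcal L$ is \emph{equivalent} to the uniform boundedness of the rank-one pieces $\Le_{m_1,m_2}$. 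Leray boundedness thus enters both directions, not only your surjectivity step; your division into an easy forward bound and a hard inverse is structurally wrong.

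The second gap is that the linchpin---$L^2(\mu_\Om)$-boundedness of $\Le_b$ under exactly these hypotheses---is assumed (``extracted from the Barrett--Lanzani framework'') rather than proved, and your description of it as a Calder\'on--Zygmund estimate in which uniform continuity of $\kappa_\Om$ tames the singular integral misidentifies where that hypothesis is used. By Reinhardt symmetry, $\Le_b$ decomposes into rank-one projections on monomial subspaces (Lemma~\ref{le:Leray bdd criteria rt}); uniform continuity together with the lower bound on $\kappa_\Om$ serves only to guarantee that the exponent function $\check p$ extends continuously to $[0,1]$ (i.e.\ $\Om\in\mathcal R'$), which makes $r_1,r_2$ regularly varying at the endpoints and permits the Laplace-method/dominated-convergence analysis of the norms \eqref{eq:Lmn} along all degenerate sequences (Lemma~\ref{le:key prop for Leray}); this extension beyond the Barrett--Lanzani class $\mathcal R$ is new content, not a citation. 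Likewise, your H\"ormander $\bar\partial$ plan for density/surjectivity is both unnecessary and unsubstantiated (it is unclear how it yields an $\mathcal L$-preimage or closed range); the paper instead inverts the coefficient relation \eqref{eq:coeff of power series} explicitly, again using the two-sided control of $\Vert\Le_{m_1,m_2}\Vert_{\mu_\Om}$. Finally, your two-sided kernel comparison is precisely the content of Theorem~\ref{th:main thm 2}$(ii)$, i.e.\ $\Vert e^{\langle z,\cdot\rangle}\Vert^2_{\mu_\Om}\approx e^{2H_\Om(z)}\Vert z\Vert^{-3/2}$ away from a compact set; your instinct to compare with egg domains is the right one (it is how Lemma~\ref{le:comparision} proceeds), but as written the proposal offers no argument for this estimate.
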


See Section~\ref{SS:normconstants} for bounds on $\Vert\mathcal L\Vert_{op}$ and $\Vert\mathcal L^{-1}\Vert_{op}$ for $\Om$ as above. On a strictly convex Reinhardt domain, the function $\kappa_\Om=(\kappa_1\kappa_2)^{-1}\kappa_3$ is well-defined on $b\Om\setminus\mathcal Z$. The additional hypothesis on $\kappa_\Om$ in Theorem~\ref{th:main} forces the domain to be in the class $\rt$ defined in \eqref{eq:rtilde}.
All domains of finite type in $\rt$, including the egg domains, satisfy the hypothesis of Theorem~\ref{th:main}; see Proposition~\ref{pr:infinite type}. Although the conditions on $\Om$ may seem very restrictive, we show that even within the class of convex Reinhardt domains, the conclusion of Theorem~\ref{th:main} fails to hold if $\kappa_3$ is allowed to vanish; see Theorem~\ref{th:negative}.


\subsection {Boundedness of the Leray transform} A key ingredient in the proof of Theorem~\ref{th:main} is the $L^2$-boundedness of the so-called Leray transform. Since the Leray transform is of independent interest, we state the relevant result separately. Classically, the {\em Leray integral} of a function $f$ on the boundary of a bounded $\cont^2$-smooth convex domain $\Om\subset\Cn$ is defined as 
\be\label{eq:Leray transform}
\Le(f)(z)=\int_{b\Om}\frac{f(\zeta)}{\langle\partial \rho(\zeta),\zeta-z\rangle^n} \lambda_\rho(\zeta),\quad z\in\Om,\quad 
\ee
where $\rho$ is a $\cont^2$-smooth defining function of $\Om$, $\partial\rho$ is shorthand for $(\partial\rho/{\partial\zeta_1},...,\partial\rho/{\partial\zeta_n})$,  and  $\lambda_\rho$ is the {\em Leray--Levi measure} given by the pull-back of the form $ d^c\rho\wedge\ltt dd^c\rho \rtt^{n-1}$ under the inclusion of $b\Om$ into $\C^n$. Due to the convexity of $\Om$, $\Le$ is a well-defined operator from $L^2(b\Om,\lambda_\rho)$ to $\hol(\Om)$. It reproduces holomorphic functions that are continuous up to the boundary; see \cite[Theorem 3.14]{Ra98}. If there is a dense subspace $\mathcal D\subset L^2(b\Om,\lambda_\rho)$ such that holomorphic functions in $\Le(\mathcal D)$ admit boundary values in $L^2(b\Om,\lambda_\rho)$, then one obtains a densely-defined operator $\Le_b:\mathcal D \rightarrow L^2(b\Om,\lambda_\rho)$ given by
\beas
    \Le_b:f&\mapsto& \text{b.v. }\Le(f).
\eeas
This is the {\em Leray transform} on $\Om$, and one can ask the 

\begin{question}
Does $\Le_b$ extend to a bounded operator from $L^2(b\Om,\lambda_\rho)$ onto $\mathcal H^2(\Om,\lambda_\rho)$?    
\end{question}

If yes, then the reproducing property of $\Le$ allows for functions in $\mathcal H^2(\Om,\lambda_\rho)$ to be decomposed into rational functions. In the plane, $\mathbb L_b$ is the Cauchy transform, and $\lambda_\rho$ is equivalent to the arc-length measure $\sigma_\Om$ on $b\Om$. The $L^p(\sigma)$-regularity, $1<p<\infty$, of the Cauchy transform, was established in the general setting of Lipschitz domains in the works of Calder{\' o}n \cite{Ca77}, Coifman-McIntosh-Meyer \cite{CMM82}, and David \cite{Da87}. For $\cont^2$-smooth domains in higher dimensions, the $L^2(\lambda_\rho)$-regularity of $\Le_b$ was established for egg domains by Hansson \cite{Ha99}, and for strongly convex bounded domains by Kerzman--Stein \cite{KS78} and Lanzani--Stein \cite{LS14}. Lanzani--Stein \cite{LS17}, {\cite{LaSt17},} also gave a counterexample to $L^2(\lambda_\rho)$-regularity of $\Le_b$ when strong convexity is dropped. Some unbounded hypersurfaces were considered by Barrett--Edholm {\cite{BE20}}, \cite{BE21}, {Edholm--Shelah\cite{EdSh25}, and Ha--Trung\cite{HaTr24}}. 

The Leray integral can be defined for any $\cont^1$-smooth convex domain $\Om\subset\Cn$ by replacing $\rho$ and $\lambda_\rho$ in \eqref{eq:Leray transform} by the Minkowski functional $m_\Om$ of $\Om$, and the boundary Monge--Amp{\`e}re measure $\mu_\Om$ associated to $m_\Om$, respectively. When $\Om$ is $\cont^2$-smooth, this coincides with the original definition of $\Le$. For $\cont^{1,1}$ domains, the $L^p(\mu_\Om)$-boundedness of $\Le_b$, $1<p<\infty$, was established for strongly $\C$-linearly convex domains by Lanzani--Stein \cite{LS14}. The class 
    \bea\label{eq:rtilde}
    \widetilde{\mathcal R}=\{\Om\subset\C^2: \Om\   \text{is a bounded convex $\cont^1$-smooth Reinhardt domain}\quad&& \notag\\
   \qquad \text{that is $\cont^2$-smooth and strongly convex away from $\mathcal Z$}\}.&&
    \eea
was studied by Barrett--Lanzani as a generalization of egg domains. They showed that for each $\Om\in\widetilde{\mathcal R}$, $\Le_b$ is a densely-defined operator on $L^2(b\Om,\mu)$, where $\mu$ belongs to a class of rotation-invariant finite measures on $b\Om$, which always includes $\mu_\Om$. They further described a subclass of domains $\mathcal R\subset\widetilde{\mathcal R}$ for which $\Le_b$ extends as a bounded projection operator from $L^2(b\Om,\mu)$ onto $\mathcal H^2(\Om,\mu)$. The class $\mathcal R$ has a technical definition, but essentially consists of domains in $\widetilde{\mathcal R}$ that are osculated by weighted $L^p$-balls even at the axes. For $\mu=\mu_\Om$, we describe a larger class of domains in $\widetilde{\mathcal R}$ for which $\Le_b$ is $L^2(\mu_\Om)$-bounded. Membership in this class imposes a weaker condition on the limiting behavior of $\kappa_1, \kappa_2$ and $\kappa_3$ than in $\mathcal R$; see Example~\ref{ex:rprime}.  

\begin{theorem}\label{th:Leray} Let $\Om\in\rt$, and $\Le_b$ denote the (densely-defined) Leray transform on $\Om$.
\begin{enumerate}[$(i)$]
\item If $(\kappa_1\kappa_2)^{-1}\kappa_3$ is uniformly continuous and bounded away from zero on $b\Om\setminus\mathcal Z$, then $\Le_b$ extends to a bounded projection operator from $L^2(b\Om,\mu_\Om)$ onto $\mathcal H^2(\Om,\mu_\Om)$.

\item If $(\kappa_1\kappa_2)^{-1}\kappa_3$ is either not bounded above or not bounded away from zero, then $\Le_b$ does not extend as a bounded operator to $L^2(b\Om,\mu_\Om)$.
\end{enumerate}
\end{theorem}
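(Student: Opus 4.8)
The plan is to reduce everything to the Fourier analysis of the Leray transform along the lines of Barrett--Lanzani, and then carry out a precise symbol computation on the class $\rt$. Since $\Om$ is a convex Reinhardt domain, both $L^2(b\Om,\mu_\Om)$ and $\mathcal H^2(\Om,\mu_\Om)$ decompose as orthogonal direct sums over the lattice $\Z^2$ of characters $\zeta\mapsto\zeta_1^{j}\zeta_2^{k}$; on each such monomial the operator $\Le_b$ acts as a scalar $c_{j,k}$ on the holomorphic sector $(j,k)\in\N_0^2$ and (because $\Le$ reproduces holomorphic functions continuous up to $b\Om$) annihilates the non-holomorphic sectors, so that $\Le_b$ is automatically a projection as soon as it is bounded. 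Thus part $(i)$ reduces to the uniform bound $\sup_{(j,k)\in\N_0^2}|c_{j,k}|<\infty$, and part $(ii)$ to producing a subsequence along which $|c_{j,k}|\to\infty$. Here I would use the slicing $\zeta=\rrp$, $s\in[0,1]$, that parametrizes $b\Om$ away from $\mathcal Z$ (via the profile curves $r_1^*,r_2^*$ of the associated $L^p$-type model), express $\mu_\Om$ in these coordinates using \eqref{eq:defn MA min}, and write $c_{j,k}$ as a ratio of one-dimensional integrals of the form $\int_0^1 r_1^*(s)^{2j+1}r_2^*(s)^{2k+1}\,w(s)\,ds$, with $w$ built out of $\kappa_1,\kappa_2,\kappa_3$ and the Leray denominator $\langle\partial m_\Om,\zeta\rangle$ (which on a Reinhardt domain reduces to the constant $1$ after normalization, by Euler's identity for the homogeneous function $m_\Om$). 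The denominator $\langle\partial m_\Om(\zeta),\zeta-z\rangle^2$ contributes the monomials upon Taylor expansion in $z$, and the weight normalization $\Vert e^{\langle z,\cdot\rangle}\Vert^{-2}$ disappears when one compares the two Hilbert-space norms.

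The key step is then an asymptotic (Laplace-method / Watson's lemma) analysis of these integrals as $j,k\to\infty$. Writing $j+k=N$ and $j/N\to t\in[0,1]$, the integrand $r_1^*(s)^{2j+1}r_2^*(s)^{2k+1}$ concentrates, as $N\to\infty$, near the unique $s=s(t)$ maximizing $t\log r_1^*(s)+(1-t)\log r_2^*(s)$ — this is exactly the point where the outward normal to $b\Om$ points in the $(j,k)$-weighted coordinate direction. A stationary-phase expansion shows that the ratio defining $c_{j,k}$ converges to a bounded continuous function of the value of $\kappa_\Om=(\kappa_1\kappa_2)^{-1}\kappa_3$ at that boundary point (the Gaussian second-derivative factors from numerator and denominator cancel, leaving the curvature data of the two relevant directions), uniformly in $t$ once $\kappa_\Om$ is uniformly continuous and bounded away from $0$; this gives $(i)$. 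For $(ii)$, if $\kappa_\Om\to\infty$ (resp.\ $\kappa_\Om\to 0$) along a sequence of boundary points approaching $\mathcal Z$, one chooses the direction $(j,k)$ so that $s(j/N)$ tracks that sequence, and the same asymptotic formula forces $|c_{j,k}|\to\infty$ (resp.\ forces the ratio of the two sectorial norms to blow up, contradicting boundedness); one must also check that the degenerate behavior of $r_1^*,r_2^*$ near the endpoints $s=0,1$ (where one profile vanishes) does not salvage the estimate, which it does not because $\mu_\Om$ charges neighborhoods of $\mathcal Z$.

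I expect the main obstacle to be the \emph{uniformity} in the two-parameter family $(j,k)$ as the ratio $j/N$ runs over all of $[0,1]$, including the regime where $s(j/N)$ drifts toward an endpoint: there the Laplace-method error terms are not uniformly controlled by naive estimates, because the profile functions $r_i^*$ degenerate (the model at the axes is only $\cont^1$, not $\cont^2$). Handling this requires splitting $[0,1]$ into a compact "interior" part, on which the stationary-phase expansion is uniform, and two "boundary" parts near $s=0,1$, on which one replaces the exact integrals by those for the osculating weighted $L^p$-model — here precisely the hypothesis that $\kappa_\Om$ is uniformly continuous and bounded away from zero on $b\Om\setminus\mathcal Z$ (equivalently, that $\Om$ is well-modelled by a fixed egg domain near each axis) makes the comparison error tend to $0$. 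A secondary technical point is verifying the claimed gain in boundary regularity, namely that these hypotheses force $b\Om$ to be $\cont^1$ and $\Om\in\rt$; this follows by integrating the ODE for the profile curves and checking that the slopes match up at $\mathcal Z$ when $\kappa_\Om$ has finite nonzero limits, and is most naturally disposed of first, before the harmonic-analytic estimates.
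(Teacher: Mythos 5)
Your overall architecture is the right one and matches the paper's (and Barrett--Lanzani's): decompose $L^2(b\Om,\mu_\Om)$ into the monomial sectors, reduce boundedness of $\Le_b$ to uniform control of the sectorial norms, and compute those norms asymptotically by Laplace's method, with the limit expressed through $\check p$ (equivalently $\kappa_\Om$) at the concentration point. But two steps as you describe them are genuinely wrong or unjustified. First, the reduction: the sector $L^2_{m_1,m_2}(b\Om,\mu_\Om)$ of $(m_1,m_2)$-monomials $g(s)e^{i(m_1\theta_1+m_2\theta_2)}$ is infinite-dimensional, and $\Le_b$ does \emph{not} act on it as a scalar $c_{j,k}$; it acts as a rank-one, non-orthogonal projection onto the span of $r_1(s)^{m_1}r_2(s)^{m_2}e^{i(m_1\theta_1+m_2\theta_2)}$. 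Its eigenvalue on that monomial is $1$ (reproduction of polynomials), so the condition $\sup_{j,k}|c_{j,k}|<\infty$ as you state it is vacuously true and cannot distinguish case $(i)$ from case $(ii)$. The correct quantity, which the paper takes from \cite[Prop.~26, Thm.~30]{BaLa09} (Lemma~\ref{le:Leray bdd criteria rt}), is the operator norm of the sectorial projection, $\Vert\Le_{m_1,m_2}\Vert^2_{\mu_\Om}=\gamma_{m_1,m_2}^2\bigl(\int_0^1 r_1^{2m_1}r_2^{2m_2}ds\bigr)\bigl(\int_0^1 (r_1^{*})^{2m_1}(r_2^{*})^{2m_2}ds\bigr)$ with $\gamma_{m_1,m_2}=(m_1+m_2+1)!/(m_1!m_2!)$: a \emph{product} of a profile integral for $\Om$ and one for the dual complement $\Om^*$, against a combinatorial factor, not a single curvature-weighted ratio. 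The eventual limit $\tfrac12\sqrt{\check p\,\check p^*}$, which drives both halves of the theorem, comes precisely from this product structure.

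Second, your plan for the endpoint regime (indices with $j/N\to 0$ or $1$, concentration point drifting to $\mathcal Z$) --- ``replace the exact integrals by those for the osculating weighted $L^p$-model, with comparison error tending to $0$'' --- is exactly what fails on the class the theorem is designed for. Continuity and positivity of $\kappa_\Om$ up to $b\Om$ only puts $\Om$ in $\mathcal R'$, not in $\mathcal R$; for a domain such as the one with $\check p(s)=2+1/\log(10/s)$ (Example~\ref{ex:rprime}) one has $r_1(s)=s^{1/\check p_0}\tau(s)$ with $\tau$ slowly varying but unbounded as $s\to 0$, so the multiplicative error against any fixed egg model diverges and your comparison does not close. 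The paper's resolution is different: it treats the genuinely new cases (one index fixed, the other tending to infinity) by regular-variation/Karamata arguments, normalizing the $\Om$-integral and the $\Om^*$-integral by factors $\tau(1/m_j)^{\mp 2m_0}$ and exploiting the duality identities $r_2^*(0)=1/r_2(0)$, $\tau^*=\tau^{-1}$ so that the divergent slowly varying factors cancel in the product $E_jE_j^*$; the remaining cases (both indices large, any ratio in $[0,\infty]$) go through Barrett--Lanzani's Theorem~45(a) using only continuous extendability of $\check p$ to $[0,1]$. Your part $(ii)$ outline and the remark about first upgrading the boundary regularity to $\cont^1$ (placing $\Om$ in $\rt$, which the paper does in the proof of Theorem~\ref{th:main}, not here) are fine in spirit, but without the corrected sectorial norm and a mechanism replacing the fixed-model comparison, the proof as proposed does not go through.
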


The case where $(\kappa_1\kappa_2)^{-1}\kappa_3$ is bounded above and away from zero, but does not extend continuously to $b\Om$, appears to be quite subtle. We have briefly elaborated on this in Section~\ref{se:rmk}.

\subsection{Relating the Leray and the Laplace transform}  Lindholm's original version of Theorem~\ref{th:lindholm} states that on strongly convex domains, $\mathcal L(\mathcal H^2(\Om,\sigma))=A^2(\Cn,\nu_\Om)$, where 
\be\label{E:origlind}
\nu_\Om(z)=e^{-2H_\Om(z)}\|z\|^{n-\frac{1}{2}}\left(dd^c H_\Om\right)^n(z),\quad z\in\Cn.
\ee
We show that this choice of measure works for precisely those domains in $\rt$ whose Leray transform is $L^2$-bounded. In order to recover Theorem~\ref{th:main} from this result, we must compare $A^2(\Cn,\nu_\Om)$ and $A^2(\Cn,\omega_\Om)$. While these spaces are the same (with comparable norms) for strongly convex domains, see Proposition~\ref{pr:str cvx weight}, we are only able to provide a sufficient condition for this equality to hold for a domain in $\rt$.


\begin{theorem}\label{th:main thm 2}
Let $\Om\in\widetilde{\mathcal R}$, and $\Le_b$ be the Leray transform on $\Om$. Let $\kappa_1, \kappa_2$ and $\kappa_3$ be as in Theorem~\ref{th:main}. Then, the following hold. 

\begin{enumerate}[$(i)$]
 \item 
The Leray transform $\Le_b$ extends to a bounded operator on $L^2(b\Om,\mu_\Om)$ if and only if $\mathcal L$ is a normed space isomorphism between $\mathcal H^2(\Om,\mu_\Om)$ and $A^2(\C^2,\nu_\Om)$.
\item
If the function $(\kappa_1\kappa_2)^{-1}\kappa_3$ is bounded above and away from zero on $b\Om\setminus\mathcal Z $, then the identity map is a normed space isomorphism between $
A^2(\C^2,\omega_\Om)$ and $A^2(\C^2,\nu_\Om)$.
\end{enumerate}
 \end{theorem}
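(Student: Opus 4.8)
The plan is to treat the two parts separately, with part $(i)$ relying on an identity between the Laplace and Leray transforms, and part $(ii)$ on a pointwise comparison of the two weights.

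For part $(i)$, the strategy is to produce an explicit factorization of $\mathcal L$ through $\Le_b$ (or its adjoint). The key computation is that for $\zeta \in b\Om$ the kernel of the Leray integral, $\langle \partial m_\Om(\zeta), \zeta - z\rangle^{-2}$, can be recovered from the exponential kernel $e^{\langle w, \zeta\rangle}$ by an integral transform in an auxiliary variable; concretely, up to normalization $\langle \partial m_\Om(\zeta), \zeta\rangle^{-2} e^{\langle z,\zeta\rangle}\langle\partial m_\Om(\zeta),\cdot\rangle^{\text{\tiny$\bullet$}}$-type manipulations let one write $\mathcal L$ as the composition of (a) multiplication by the Gaussian-type normalizing factor $\|e^{\langle z,\cdot\rangle}\|^{-1}_{L^2(\mu_\Om)}$, and (b) an isometry between $\mathcal H^2(\Om,\mu_\Om)$ and a "reproducing-kernel" realization of $A^2(\C^2,\nu_\Om)$ whose surjectivity is equivalent to surjectivity of $\Le_b$ onto $\mathcal H^2(\Om,\mu_\Om)$. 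In one direction, if $\Le_b$ is $L^2(\mu_\Om)$-bounded, then by Theorem~\ref{th:Leray}-type reasoning the reproducing property of $\Le$ gives a rational-function decomposition of $\mathcal H^2(\Om,\mu_\Om)$, and one computes directly that the Laplace transform of such a decomposition lands in, and exhausts, $A^2(\C^2,\nu_\Om)$ with equivalent norms; the weight $\nu_\Om$ from \eqref{E:origlind} is exactly what makes the change of variables $\zeta \mapsto z$ (boundary point to frequency) into an isometry, because $e^{-2H_\Om(z)}\|z\|^{n-1/2}$ is the asymptotic size of $\|e^{\langle z,\cdot\rangle}\|^2_{L^2(\mu_\Om)}$ as $|z|\to\infty$ along the relevant cones — this asymptotic is where most of the work sits. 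Conversely, if $\mathcal L$ is a normed space isomorphism onto $A^2(\C^2,\nu_\Om)$, one inverts the above: the reproducing kernel of $A^2(\C^2,\nu_\Om)$ pulls back under $\mathcal L^{-1}$ to a bounded reproducing operator on $\mathcal H^2(\Om,\mu_\Om)$, and unwinding the kernel identity identifies this operator with $\Le_b$, forcing its $L^2(\mu_\Om)$-boundedness.

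For part $(ii)$, since $A^2(\C^2,\omega_\Om)$ and $A^2(\C^2,\nu_\Om)$ consist of the same entire functions, it suffices to show that $\omega_\Om$ and $\nu_\Om$ are mutually absolutely continuous with Radon--Nikodym derivative bounded above and below. Both weights share the common Monge--Amp\`ere factor $(dd^c H_\Om)^2$, so the ratio is $\|e^{\langle z,\cdot\rangle}\|^{-2}_{L^2(\mu_\Om)} \big/ \big(e^{-2H_\Om(z)}\|z\|^{n-1/2}\big)$. The heart of the matter is then a two-sided estimate
\[
  c\,\|z\|^{n-\frac{1}{2}} \;\le\; e^{2H_\Om(z)}\,\|e^{\langle z,\cdot\rangle}\|^{2}_{L^2(\mu_\Om)} \;\le\; C\,\|z\|^{n-\frac{1}{2}},\qquad \|z\|\ \text{large},
\]
obtained by a Laplace-type (stationary phase) analysis of the boundary integral $\int_{b\Om} e^{2\,\mathrm{Re}\langle z,\zeta\rangle}\,d\mu_\Om(\zeta)$: the integrand concentrates near the point $\zeta(z)\in b\Om$ where $\mathrm{Re}\langle z,\zeta\rangle = H_\Om(z)$, and the Hessian of the phase there is governed by the principal curvatures $\kappa_1,\kappa_2,\kappa_3$ of $b\Om$. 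The condition that $(\kappa_1\kappa_2)^{-1}\kappa_3$ be bounded above and away from zero on $b\Om\setminus\mathcal Z$ is precisely what guarantees that this Hessian is nondegenerate with eigenvalue ratios controlled uniformly as $\zeta(z)\to\mathcal Z$, so that the leading term $\|z\|^{-(n-1/2)}e^{2H_\Om(z)}$ of the asymptotic expansion has coefficients bounded above and below. Near $\mathcal Z$ one must be careful because $H_\Om$ itself is only $\cont^1$ and the curvatures degenerate; here one passes to the polar/Reinhardt coordinates in which $b\Om$ is modelled on an egg domain and checks that the contribution of a neighborhood of $\mathcal Z$ to the integral is comparable to the contribution predicted by the model $L^p$-ball, using the curvature hypothesis to absorb the discrepancy.

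The main obstacle I anticipate is the uniformity of the stationary-phase estimate in part $(ii)$ as the critical point $\zeta(z)$ approaches the set $\mathcal Z$, where $b\Om$ loses $\cont^2$-smoothness and two of the three curvatures may blow up or vanish: ordinary stationary phase gives pointwise asymptotics but not the uniform two-sided bounds one needs, so the argument must be organized as a genuinely uniform estimate, most likely by decomposing $b\Om$ into the strongly convex part (where Theorem~\ref{th:lindholm}'s machinery applies directly) and dyadic annular neighborhoods of $\mathcal Z$ (where one rescales to the egg-domain model and tracks constants explicitly). The same uniformity issue — packaged as the $L^2(\mu_\Om)$-boundedness of $\Le_b$ — is exactly what part $(i)$ feeds on, which is why both halves of the theorem hinge on the curvature quotient $(\kappa_1\kappa_2)^{-1}\kappa_3$ behaving well near the axes.
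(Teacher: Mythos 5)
Your proposal does not reach the paper's proof, and both halves have genuine gaps. For part $(i)$, the factorization of $\mathcal L$ through $\Le_b$ (via a Fantappi\'e/Borel-type kernel identity and reproducing kernels) is only gestured at: no actual identity is written, the claim that surjectivity of your ``isometry'' is equivalent to surjectivity of $\Le_b$ is not substantiated, and the converse direction (``pull back the reproducing kernel of $A^2(\C^2,\nu_\Om)$ under $\mathcal L^{-1}$ and identify it with $\Le_b$'') is asserted rather than proved. Worse, you invoke the asymptotics of $\Vert e^{\langle z,\cdot\rangle}\Vert^2_{\mu_\Om}$ as the engine of part $(i)$, but that asymptotic is precisely the content of part $(ii)$ and requires the curvature hypothesis; part $(i)$ is stated for every $\Om\in\rt$ and must be proved without it. The paper instead exploits the toric symmetry: Fourier/monomial expansions on $b\Om$ and power series on $\C^2$ (Lemmas~\ref{pr:fourier series of Hardy space}--\ref{le:series Bergman space}) diagonalize $\mathcal L$, and the resulting multipliers are exactly the rank-one norms $\Vert\Le_{m_1,m_2}\Vert_{\mu_\Om}$ from Lemma~\ref{le:Leray bdd criteria rt}; uniform boundedness of these is equivalent to boundedness of $\Le_b$, and the ``only if'' direction is an explicit construction of an $f\in\mathcal H^2(\Om,\mu_\Om)$ with $\mathcal L(f)\notin A^2(\C^2,\nu_\Om)$ when they are unbounded. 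You would need to supply an argument of comparable precision; the commutative-diagram route is mentioned in the paper only as an alternative that is not carried out.

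For part $(ii)$, your reduction ``it suffices to show the Radon--Nikodym derivative of $\nu_\Om$ with respect to $\omega_\Om$ is bounded above and below'' is not available: near $z=0$ the factor $\Vert z\Vert^{3/2}$ in $\nu_\Om$ vanishes while the density of $\omega_\Om$ stays bounded, so the weights are comparable only outside a compact set, and on the compact piece one must use holomorphy of $F$ (the paper does this via the coarea formula \eqref{eq:cov} and a series computation over $k\Om^*$). Your displayed asymptotic also has the exponential on the wrong side; the correct statement is $\Vert e^{\langle z,\cdot\rangle}\Vert^2_{\mu_\Om}\approx e^{2H_\Om(z)}\Vert z\Vert^{-3/2}$ for $\Vert z\Vert$ large. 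Most importantly, the exterior estimate itself---the uniform two-sided Laplace-type bound as the maximizing boundary point approaches $\mathcal Z$, where $\kappa_1,\kappa_2$ individually degenerate and only their ratio with $\kappa_3$ is controlled---is the heart of the matter, and you explicitly defer it as ``the main obstacle I anticipate,'' offering only a plan (dyadic decomposition, rescaling to egg models). The paper's mechanism is different and global: in the Barrett--Lanzani parametrization it proves the pointwise two-sided comparison $1-F_\Om\approx 1-F_{\mathbb B^2}$ of the phase functions (Lemma~\ref{le:comparision}, by comparing with the egg $\Om_{p_\ell}$ and elementary optimization using the monotone factors $h_1,h_2$ in \eqref{eq:r_1 with h_1}--\eqref{eq:r_2 with h_2}), and then reduces to the explicitly computable ball case (Lemma~\ref{le:str cvx weight comp}). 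Until the uniform estimate near $\mathcal Z$ is actually carried out, your argument for $(ii)$ is a statement of the problem rather than a proof.
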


See Section~\ref{SS:normconstants} for bounds on $\Vert\mathcal L\Vert_{op}$ and $\Vert\mathcal L^{-1}\Vert_{op}$ in the case when $\Le_b$ is $L^2(\mu_\Om)$-bounded.

We elaborate briefly on the role that the Leray transform  plays in studying the range of $\mathcal L$. On strongly convex domains, the Laplace and Leray transforms are related via the following commutative diagram:
\bes \begin{tikzcd}
\mathcal H^2(\Om,\mu_\Om) \arrow{dr}{\mathcal L} \arrow{r}{\mathcal S} & \mathcal L^2(b\Om^*,\mu_{\Om^*})  \arrow{r}{\Le_b} 
& \mathcal H^2(\Om^*,\mu_{\Om^*})  \\%
& A^2(\Cn,\omega_\Om)=A^2(\Cn,\nu_\Om) 
\arrow{ur}{\mathcal B}&  
\end{tikzcd}
\ees
Here, $\Om^*$ 
denotes the dual complement of $\Om$, see \eqref{de:dual complement}. 
There is a natural bijection $s:b\Om\rightarrow b\Om^*$, which induces the map $\mathcal S$. The map $\mathcal F_b=\Le_b\circ \mathcal S$ is the restriction to $\mathcal H^2(\Om,\mu_\Om)$ of the well-known Fantappi{\' e} transform, which is an isomorphism between $\hol'(\overline\Om)$ and $\hol(\text{int \!}\Om^*)$. The map $\mathcal B$ is the restriction to $A^2(\Cn,\nu_\Om)$ of the so-called Borel transform, see \cite[Definition~14]{Li02}. In order to establish that $\mathcal L$ is an isomorphism, Lindholm shows that $\mathcal F$ is  an isomorphism between $\mathcal H^2(\Om,\mu_\Om)$ and $\mathcal H^2(\Om^*,\mu_{\Om^*})$, and $\mathcal B$ is an isomorphism between $A^2(\Cn,\nu_\Om)$ and $\mathcal H^2(\Om^*,\mu_{\Om^*})$. In our case, it can be shown that
\begin{enumerate}
    \item [(a)]$\mathcal B:A^2(\C^2,\nu_\Om)\rightarrow \mathcal H^2(\Om^*,\mu_{\Om^*})$ is a well-defined isomorphism for all $\Om\in\widetilde{\mathcal R}$, 
    \item [(b)] $\mathcal F:\mathcal{H}^2\ltt \Om,\mu_\Om\rtt\rightarrow \mathcal{H}^2\ltt \Om^*,\mu_{\Om^*}\rtt$ is a well-defined isomorphism if and only if the Leray transform $\Le_b$ is a bounded projection operator from $L^2\ltt b\Om,\mu_\Om\rtt$ onto $\mathcal H^2\ltt \Om,\mu_\Om\rtt$.
\end{enumerate}
However, the symmetries of the domains allow us to take a more direct approach via series expansions. 



\subsection{A negative result} The following example shows that the measure $\omega_\Om$ will not yield as comprehensive a result in higher dimensions as Theorem~\ref{th:Lu and Yul}.

\begin{theorem}\label{th:negative}
    Let $\Om=\{(z_1,z_2)\in\C^2:|z_1|+|z_2|<1\}$. Then, 
    \bes \mathcal{L}\ltt\mathcal{H}^2\ltt \Om, \mu_\Om\rtt\rtt\subsetneq A^2\ltt\C^2,\nu_\Om\rtt\subsetneq A^2\ltt\C^2,\omega_\Om\rtt.
    \ees
\end{theorem}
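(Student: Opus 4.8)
The plan is to treat $\Om = \{|z_1|+|z_2|<1\}$, the prototypical non-smooth convex Reinhardt domain (the $p_1=p_2=1$ "egg"), by exploiting the monomial orthogonal basis. First I would compute, on the Reinhardt domain $\Om$, the three relevant measures explicitly as rotation-invariant measures: the boundary Monge--Amp\`ere measure $\mu_\Om$ on $b\Om\setminus\mathcal Z$, the support function $H_\Om$ of $\Om$ (which for this $\Om$ is $H_\Om(z)=\max(|z_1|,|z_2|)$, a piecewise-linear function whose Monge--Amp\`ere mass $(dd^cH_\Om)^2$ is supported on the torus-like set $\{|z_1|=|z_2|\}$), and the normalizing exponential norm $\Vert e^{\langle z,\cdot\rangle}\Vert_{L^2(\mu_\Om)}^2$. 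Because everything is Reinhardt-invariant, the Hilbert spaces $\mathcal H^2(\Om,\mu_\Om)$, $A^2(\C^2,\nu_\Om)$, and $A^2(\C^2,\omega_\Om)$ all decompose as orthogonal direct sums over monomials $z^\alpha=z_1^{\alpha_1}z_2^{\alpha_2}$, $\alpha\in\N^2$, so each space is determined by its sequence of monomial norms $\{a_\alpha\}$, $\{b_\alpha\}$, $\{c_\alpha\}$ respectively. The two claimed strict inclusions then become two claimed asymptotic statements about these sequences: $a_\alpha/b_\alpha\to\infty$ along some subsequence (so $\mathcal L$ is not onto $A^2(\C^2,\nu_\Om)$, only into it), and $b_\alpha/c_\alpha\to\infty$ along some subsequence (so the identity embeds $A^2(\C^2,\nu_\Om)$ strictly into $A^2(\C^2,\omega_\Om)$), while in each case the reverse ratio stays bounded so the inclusion is genuine and continuous.

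The core computation is therefore a Laplace-type asymptotic of the monomial integrals. For $b_\alpha$ and $c_\alpha$ I would integrate $|z^\alpha|^2$ against $\nu_\Om$ and $\omega_\Om$; since $(dd^cH_\Om)^2$ concentrates on $\{|z_1|=|z_2|=t\}$, these reduce to one-dimensional integrals in $t\in(0,\infty)$ of the form $\int_0^\infty t^{2|\alpha|}\,e^{-2H_\Om}\,(\text{power of }t)\,dt$ versus the same with the factor $\Vert e^{\langle z,\cdot\rangle}\Vert_{L^2(\mu_\Om)}^{-2}$ in place of $e^{-2H_\Om}\Vert z\Vert^{n-1/2}$; Stirling/saddle-point gives their ratio. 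For $a_\alpha$ I would use the P\'olya--Ehrenpreis--Martineau picture: $\mathcal L(z^\alpha/\|z^\alpha\|_{\mathcal H^2})$ up to constants is (a multiple of) the monomial $\zeta\mapsto \zeta^\alpha$ again — more precisely, $\mathcal L$ maps the monomial $\zeta^\alpha\in\mathcal H^2(\Om,\mu_\Om)$ to a constant times $z^\alpha$ — so $a_\alpha$ is essentially $\|\zeta^\alpha\|_{L^2(\mu_\Om)}^{-2}$ times $\|z^\alpha\|_{A^2(\C^2,\nu_\Om)}^{-1}$-type factors; this is where the discrepancy with $b_\alpha$ originates, because the exponential-norm normalization $\Vert e^{\langle z,\cdot\rangle}\Vert_{L^2(\mu_\Om)}^{-2}$ in $\omega_\Om$ and the normalization implicit in $\nu_\Om$ differ by a factor that degenerates (polynomially in $|\alpha|$) precisely at the axes, i.e.\ along $\alpha=(k,0)$ or $(0,k)$ as $k\to\infty$. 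I expect the cleanest route is to compute all three sequences along the "diagonal" multi-indices $\alpha=(k,k)$ (where $H_\Om$ and the curvature data are uniform) and along the "axis" multi-indices $\alpha=(k,0)$ (where $\kappa_3$ vanishes in the limit and the normalizations fall apart at different rates), and read off both strictness statements from these two families.

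The main obstacle is the first strict inclusion, $\mathcal L(\mathcal H^2(\Om,\mu_\Om))\subsetneq A^2(\C^2,\nu_\Om)$: proving $\mathcal L$ is bounded and injective into $A^2(\C^2,\nu_\Om)$ is one thing, but showing it is genuinely \emph{not} surjective requires producing (or identifying asymptotically) a specific sequence $\{F_\alpha\}$ of monomials with $\sum |c_F(\alpha)|^2\, b_\alpha^2<\infty$ but no $\mathcal H^2$-preimage, i.e.\ exhibiting $a_\alpha/b_\alpha\to\infty$. This hinges on the fact that for $\Om\notin\rt$ (the boundary is not even $\cont^1$ here, and $\kappa_3$ vanishes along $\mathcal Z$), the Leray transform $\Le_b$ is \emph{unbounded} on $L^2(b\Om,\mu_\Om)$ — which is exactly the regime of Theorem~\ref{th:Leray}$(ii)$ and Theorem~\ref{th:main thm 2}$(i)$ — so $\mathcal L$ cannot be onto $A^2(\C^2,\nu_\Om)$; the work is to make the rate of blow-up of $\|\Le_b\|$ on the monomial subspaces visible as the ratio $a_\alpha/b_\alpha$ and confirm it is unbounded (rather than merely non-invertible with bounded inverse-defect). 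The second inclusion $A^2(\C^2,\nu_\Om)\subsetneq A^2(\C^2,\omega_\Om)$ is more routine: by Theorem~\ref{th:main thm 2}$(ii)$ the two weights give the \emph{same} space whenever $(\kappa_1\kappa_2)^{-1}\kappa_3$ is bounded above and away from zero, and for this $\Om$ that ratio is bounded above but \emph{tends to $0$} at the axes, so the identity is bounded $A^2(\C^2,\nu_\Om)\hookrightarrow A^2(\C^2,\omega_\Om)$ but the comparison of monomial norms along $\alpha=(k,0)$ shows $b_\alpha/c_\alpha\to\infty$, giving strictness. I would close by assembling the two monomial-norm asymptotics into the displayed chain of strict inclusions.
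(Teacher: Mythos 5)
Your overall framework (computing $H_\Om=\max\{|z_1|,|z_2|\}$, locating $(dd^cH_\Om)^2$ on $M=\{|z_1|=|z_2|\}$, decomposing all three spaces into monomial norms, and comparing the resulting weight sequences via Stirling) is exactly the paper's route, but your geometric diagnosis — and hence the family of multi-indices you plan to extract strictness from — is wrong. For this $\Om$ the Reinhardt shadow is the straight segment $r_1+r_2=1$, so $\kappa_3$ (equivalently $p-1$) vanishes \emph{identically} on $b\Om\setminus\mathcal Z$; the quotient $(\kappa_1\kappa_2)^{-1}\kappa_3$ does not merely ``tend to $0$ at the axes,'' and at $\mathcal Z$ the boundary is not even $\cont^1$, so $\Om\notin\rt$ and you cannot invoke Theorem~\ref{th:Leray}$(ii)$ or Theorem~\ref{th:main thm 2} at all — this is precisely why the paper re-proves the series lemmas and the exponential-norm estimate by hand for this domain. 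The actual source of the discrepancy is the global flatness: on $M$ one finds $\Vert e^{\langle z,\cdot\rangle}\Vert^2_{\mu_\Om}\approx e^{2H_\Om(z)}\Vert z\Vert^{-1}$, because the Laplace-type integral over $b\Om$ has a whole segment of maximizers in the $s$-variable (one Gaussian factor $\Vert z\Vert^{-1/2}$ is lost), whereas $\nu_\Om$ carries the strongly convex normalization $\Vert z\Vert^{-3/2}$. Since both Bergman weights are supported on $M$ and are rotation invariant, their monomial norms depend only on the total degree, and the gap is the modest factor $\Gamma(2|\alpha|+\tfrac52)/(2|\alpha|+1)!\approx |\alpha|^{1/2}$; so strictness of $A^2(\C^2,\nu_\Om)\subsetneq A^2(\C^2,\omega_\Om)$ requires a borderline lacunary series (the paper uses $\sum_k k^{-3/4}2^{2k}((4k+1)!)^{-1/2}z_1^kz_2^k$), not a single monomial, and it has nothing to do with the axes.

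The more serious failure is in the first inclusion. Writing $h_\alpha=\int_0^1 s^{2m_1}(1-s)^{2m_2}ds$ for the Hardy weight, $\tfrac14 h_\alpha/(m_1!m_2!)$ for the diagonal Laplace multiplier, and $n_\alpha\approx 2^{-2|\alpha|}\Gamma(2|\alpha|+\tfrac52)$ for the $\nu_\Om$-weight, surjectivity of $\mathcal L$ onto $A^2(\C^2,\nu_\Om)$ is equivalent to boundedness of $R_\alpha=(m_1!m_2!)^2/(h_\alpha n_\alpha)$. Stirling gives $R_{(k,0)}\approx 1$ but $R_{(k,k)}\approx \sqrt k$: the obstruction is visible only when $\min\{m_1,m_2\}\to\infty$, i.e.\ along the diagonal multi-indices, which is exactly where you expected the data to be ``uniform,'' while along your proposed axis family $(k,0)$ the normalizations agree and no counterexample can be extracted. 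The paper's witness for non-surjectivity is accordingly a diagonal series $\sum_k b_{k,k}z_1^kz_2^k\in A^2(\C^2,\nu_\Om)$ whose formal $\mathcal L$-preimage has divergent $\mathcal H^2(\Om,\mu_\Om)$-norm. So while your computational machinery would eventually reveal this, the proposal as written rests on an incorrect picture of where the degeneration lives (axes versus the flat faces dual to the diagonal), predicts the wrong test family for the key strictness statement, and leans on theorems whose hypotheses this domain does not satisfy.
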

Here, unlike in the case of $\rt$, the measure $(dd^c H_\Om)^2$ is supported on a three-dimensional surface in $\C^2$. This is analogous to the case of planar polygons, where the Riesz measure $\triangle H_\Om$ is supported on a union of real lines. However, for planar polygons, we have that $ A^2\ltt\C^2,\nu_\Om\rtt\subsetneq A^2\ltt\C^2,\omega_\Om\rtt=\mathcal{L}\ltt\mathcal{H}^2\ltt \Om, \mu_\Om\rtt\rtt$. Currently, we do not know of any $\cont^2$-smooth domains in $\C^2$ for which  $\mathcal{L}\ltt\mathcal{H}^2\ltt \Om, \mu_\Om\rtt\rtt \neq A^2\ltt\C^2,\omega_\Om\rtt$.

\subsection{Remarks on the choice of boundary measure}\label{sub:bdy meas} 
The suitability of $\mu_\Om$ for studying $\mathcal L$ was already indicated by Lindholm \cite{Li02}. We collect some observations to support this. 

\begin{enumerate}
    \item [(a)] Since Leray--Levi measures appear explicitly in the Leray kernel, boundedness of the Leray operator has often been studied with respect to these measures. Hansson makes a strong case for this measure in \cite{Ha99}. However, Leray--Levi measures are not independent of the choice of defining functions. For convex domains, the Minkowski functional allows for a canonical choice of defining function. Furthermore, this choice behaves well under the duality operation: $s^*(\mu_{\Om^*})=\mu_\Om$, where $s:b\Om\rightarrow b\Om^*$ is the natural bijection mentioned above.
    \item [(b)] Up to a constant factor, $\mu_\Om$ coincides with the special measure $\mu_0$ of order $0$ considered in \cite[\S 8]{BaLa09}; see \eqref{eq:boundary MA wrt param}. The $L^2$-adjoint of $\Le_b$ with respect to this measure is somewhat special, as demonstrated in \cite[Prop. 49]{BaLa09}.
    \item [(c)] When tackling the Borel transform $\mathcal B$, Lindholm uses a coarea-type formula over the level sets of $\mu_\Om$; see Proposition~\ref{pr:cov} for a version for $\rt$. Such a formula is needed to transfer data from $b\Om$ to $\Cn$. It is in this formula that the measures $\omega_\Om$ and $\mu_\Om$ appear naturally as the analogues of $\omega_{std}$ and $\sigma_\Om$ in the classical coarea formula.

\end{enumerate} 

\subsection{Bounds on the operator norms}\label{SS:normconstants} In applications, the classical Paley--Wiener theorem is used in conjunction with the Plancherel theorem. In all the results above, $\mathcal L$ is only shown to be a normed space isomorphism (as opposed to an isometric isomorphism). By tracking the constants in our proofs, we obtain the following bounds.
\begin{enumerate}
    \item [(a)] If $\Om\in\rt$ and $\Le_b$ extends as a bounded operator on $L^2(b\Om,\mu_\Om)$, then 
    \be\label{eq:norm bound laplace}
      \frac{\sqrt{\pi}}{e^2}c_\Om^{\frac{3}{2}}\Vert f\Vert_{\mu_\Om}^2
        \leq \Vert\mathcal L f\Vert_{\nu_\Om}^2\leq 
        \frac{5^3\sqrt{e}}{2^{15/2}\pi}C_\Om^{\frac{3}{2}}\Vert \Le_b\Vert_{op}^2\Vert f\Vert_{\mu_\Om}^2,
    \ee
for all $f\in \mathcal H^2(\Om,\mu_\Om)$, where $c_\Om^{-1}=\sup_{\Vert z\Vert=1}H_\Om(z)$ and $C_\Om^{-1}=\inf_{\Vert z\Vert=1}H_\Om(z).$
\medskip 

\item [(b)] If $\nu_\Om$ is replaced by the equivalent measure
    \bes        \widetilde\nu_\Om=e^{-2H_\Om(z)}H_\Om(z)^{\frac{3}{2}}\left(dd^c H_\Om\right)^2(z),\quad z\in\Cn,
    \ees
above, then $c_\Om^{3/2}$ and $C_\Om^{3/2}$ drop out of the above inequalities.  

\item [(c)] If $\Om\in\rt$ satisfies the hypothesis of Theorem~\ref{th:main thm 2} $(ii)$, $p_\ell=\inf\{p(\zeta):\zeta\in b\Om\}$, and $p_g=\sup\{p(\zeta):\zeta\in b\Om\}$, where $p$ is defined as in Section \ref{sub:domains}. Then
    \be\label{}
      k_1(\alpha_\Om C_\Om^{-1})^{\frac{3}{2}}\Vert F\Vert_{\nu_\Om}^2
        \leq \Vert F\Vert_{\omega_\Om}^2\leq 
        k_2(\beta_\Om c_\Om^{-1})^{\frac{3}{2}}\Vert F\Vert_{\nu_\Om}^2,\quad
        F\in A^2(\C^2,\omega_\Om),
    \ee 
    where $\alpha_\Om=p_g^{-1}$ and $\beta_\Om={p_g}^{-1}{(p_g-1)}$ when $p_\ell\geq 2$, $\alpha_\Om={p_g^{-1}}{(p_\ell-1)}$ and $\beta_\Om={p_g}^{-1}{(p_l-1)^{-1}}{(p_g-1)}$ when $1<p_\ell\leq 2$, $c_\Om$ and $ C_\Om$ are as in \eqref{eq:norm bound laplace}, and
$k_1,k_2$  are two positive constants that are independent of $\Om$.
\end{enumerate}

\subsection{Organization of the paper}
 In Section \ref{sub:prelim}, we elaborate on the domains, measures and function spaces appearing in the main results. In Section \ref{sub:tech tools}, we collect some technical tools that are used extensively in the rest of the paper. Of particular note are the following two ingredients from \cite{BaLa09}: 
 \begin{itemize}
\item [(i)] a special parametrization of any domain in $\rt$; see \eqref{eq:param s}, and
\item [(ii)] a decomposition of the Leray transform on any domain in $\rt$ into bounded rank-one projection operators; see Lemma ~\ref{le:Leray bdd criteria rt}.
 \end{itemize}
The proof of Theorem~\ref{th:Leray} is presented in Section \ref{sub:proof PW part 2}. It is a variation of the proof of Theorem~45 in \cite{BaLa09}, wherein one performs an asymptotic analysis of the norms of the rank-one operators obtained in (ii). The main distinction in our case is the weaker regularity of the parametrizing functions appearing in \eqref{eq:r_1 in s} and \eqref{eq:r_2 in s}. The proof of Theorem~\ref{th:main thm 2} is presented in Section \ref{sub:proof of PW}. The first part of the proof heavily relies on the series representations obtained in Section~\ref{subsec:series expansion}. The second half of the proof involves a direct comparison of the two measures in question. The proofs of Theorem \ref{th:main} and Theorem \ref{th:negative} are placed in Section~\ref{sub:counter}. Finally, in Section \ref{sub:comp str cvx}, we establish the equality of the two weighted Bergman spaces $A^2\ltt\C^n,\omega_\Om\rtt$ and $A^2\ltt\C^n,\nu_\Om\rtt$   
for strongly convex domains, thus unifying Lindholm's higher-dimensional result with the planar one. 
\vspace{2pt}
\newline
\textbf{Acknowledgements.} 
I am grateful to my thesis advisor, Purvi Gupta, for suggesting this problem and sharing many invaluable insights, as well as engaging in fruitful discussions throughout the course of this project. I would also like to thank her for helping me with the writing of this paper. I am also thankful to Koushik Ramachandran for directing me towards literature on regularly varying functions. Finally, I am grateful to the anonymous referees for their useful comments. 
\textbf{Funding.}This work is supported by a scholarship from the Indian Institute of Science, and the DST-FIST
programme (grant no. DST FIST-2021 [TPN-700661]).

\section{Preliminaries}\label{sub:prelim}
We collect here some preliminary results and observations regarding our main objects of study.  
\subsection{Notation} The following notation will be used throughout the paper. 
\begin{itemize}
    \item [(1)] $\mathbb B^n(R)$ denotes the Euclidean ball in $\C^n$ centered at the origin and of radius $R>0$.
    \item [(2)] $\mathbb B^n=\mathbb B^n(1)$ denotes the unit Euclidean ball in $\C^n$.
    \item [(3)] $\mathcal Z$ denotes the set $\{(z_1,z_2)\in\C^2:z_1z_2=0\}$.
    \item [(4)]
    $\langle\zeta,z\rangle$ denotes the pairing $\zeta_1z_1+\zeta_2z_2\cdots+\zeta_nz_n$, $\zeta=(\zeta_1,\cdots,\zeta_n),z=(z_1,\cdots,z_n)\in\C^n$.
    \item [(5)] $\omega_{std}$ denotes the Lebesgue volume measure on $\C^n$.
    \item [(6)] Given a bounded convex domain $\Om\subset\Cn$,
    \begin{itemize}
    \item $b\Om_+=b\Om\setminus \mathcal Z$,
    \item $m_\Om$ denotes the Minkowski functional of $\Om$,
    \item $H_\Om$ denotes the support function of $\Om$,
    \item $\mu_\Om$ denotes the boundary Monge-Amp{\`e}re measure of $\Om$,
    \item $\sigma_\Om$ denotes the Euclidean surface area measure on $b\Om$,
    \item $\omega_\Omega$ denotes the measure on $\Cn$ defined in \eqref{E:main},
    \item $\nu_\Omega$ denotes the measure on $\Cn$ defined in \eqref{E:origlind}.
    \end{itemize}
    
    \item [(7)] $\Vert.\Vert_{\nu}$ denotes the  $L^2(\nu)$-norm for a given measure $\nu$.
    \item [(8)] $\|.\|_{C(K)}$ denotes the supremum norm on the space of continuous functions on the compact set $K$.
    \item [(9)] $d=\partial+\overline{\partial}$ denotes the standard exterior derivative.
    \item [(10)] $d^c=\frac{i}{4\pi}\ltt{\overline{\partial}-\partial}\rtt.$
    \item [(11)] $\mathds 1_A$ denotes the indicator function of a set $A\subset\C^n$.
    \item [(12)] Given two $\R$-valued functions $f$ and $g$ on set a $X$, $f\approx g$ on $X$ denotes the existence of $C_1, C_2>0$ such that $C_1 g(x)\leq f(x)\leq C_2g(x)$ for all $x\in X$.
        \item [(13)] Given two positive measures $\mu$ and $\nu$ on a set $X$, 
        \begin{itemize}
            \item $\mu\ll\nu$ denotes that $\mu$ is absolutely continuous with respect to $\nu.$
            \item $\mu\approx\nu$ denotes that $\mu$ and $\nu$ are mutually absolutely continuous, and $d\mu/d\nu\approx 1$ as functions on $X$. 
        \end{itemize}
        \item [(14)] $\N$ denotes the set of all natural numbers, i.e., set of all positive integers union $\{0\}$.
\end{itemize}

\subsection{The class $\mathbf{\rt}$}\label{sub:domains}
Let $\Om$ be a bounded convex Reinhardt domain in $\C^2$ that is strongly convex away from $\mathcal Z$. Then $\Om$ is modelled by weighted $L^p$-balls in the following sense. Given  $\zeta=(\zeta_1,\zeta_2)\in b\Om_+=b\Omega\setminus\mathcal Z$, there exists a unique triplet $(p(\zeta),a_1(\zeta),a_2(\zeta))\in (1,\infty)\times(0,\infty)\times(0,\infty)$ such that the domain \bes \left\{(z_1,z_2)\in\C^2:a_1(\zeta)|z_1|^{p(\zeta)}+a_2(\zeta)|z_2|^{p(\zeta)}<1\right\}\ees
osculates $\Om$ at the point $\zeta$ up to order $2$; see \cite[Prop. 8]{BaLa09} whose proof does not depend on the regularity of $b\Om$ at points in $\mathcal Z$. The functions $a_1,a_2$ and $p$ are continuous on $b\Om_+$ and rotation-invariant in both the coordinates. The exponent function $p$ is expressible in terms of the principal curvatures of $b\Om_+$, as shown in Lemma~\ref{le:curvature and p} below. 

Due to its rotational symmetry, much of the geometry of $\Om$ is captured by its Reinhardt shadow. Let $\gamma_\Om=\left\{(r_1,r_2)\in \R^2_{\geq0}:(r_1,r_2)\in b\Om\right\}.$ Then, 
\be\label{eq:graph param gamma}
\gamma_\Om=\{(r_1,r_2):r_2=\phi(r_1),\ 0\leq r_1\leq b_1\}
\ee
for some $b_1>0$ and continuous function $\phi:[0,b_1]\rightarrow[0,\infty)$ satsifying
\begin{itemize}
    \item [$(i)$] $\phi>0$ on $[0,b_1)$ and $\phi(b_1)=0$,
    \item [$(ii)$] $\phi$ is $\cont^2$-smooth and strongly concave on $(0,b_1)$.
\end{itemize}
As a consequence of $(ii)$, one also obtains that
\begin{itemize}
    \item [$(iii')$] $\lim_{t\rightarrow 0^+}\phi'(t)\in (-\infty,0]$ and $\lim_{t\rightarrow b_1^-}\phi'(t)\in [-\infty,0)$.
\end{itemize}
Conversely, any $b_1>0$ and continuous $\phi:[0,b_1]\rightarrow [0,\infty)$ satisfying $(i)$ and $(ii)$ uniquely determine a bounded convex Reinhardt domain in $\C^2$ that is strongly convex off of $\mathcal Z$. In this paper, for a given $\Om,$ $b_1$ and $\phi$ denote the unique positive number and continuous function granted by \eqref{eq:graph param gamma}. The intercept of $\gamma_\Om$ with the $y$-axis, $\phi(0)$, is denoted by $b_2$.

\begin{lemma}\label{le:curvature and p}
Let $\Om$ be as above and $\rho$ be a   $\mathcal C^1$-smooth defining function of $\Om$ near $\zeta\in b\Om_+$.  Then, 
\bea
p(\zeta)&=&1+\ltt\frac{\kappa_3}{\kappa_1\kappa_2}\ltt\zeta\rtt
\times\frac{\Vert\nabla \rho(\zeta)\Vert}{2\operatorname{Re}\left<\partial{\rho}(\zeta),\zeta\right>}
\rtt \label{eq:reln between kappa an p}\\
&=&
1+\frac{\left|\zeta_1\right|\phi''(\left|\zeta_1\right|)\phi(\left|\zeta_1\right|)}{\phi'(\left|\zeta_1\right|)\ltt\phi(\left|\zeta_1\right|)-\left|\zeta_1\right|\phi'(\left|\zeta_1\right|)\rtt}. \label{eq:p in phi}
 \eea   
\end{lemma}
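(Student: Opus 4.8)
The plan is to prove the two formulae for $p(\zeta)$ by comparing the second-order data of $b\Om$ at $\zeta$ with that of the osculating weighted $L^p$-ball, working separately in the "Reinhardt-shadow" picture (to get \eqref{eq:p in phi}) and in the "invariant/curvature" picture (to get \eqref{eq:reln between kappa an p}), and then matching the two.

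\medskip

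\emph{Step 1: Reduce to the osculating ball.} Fix $\zeta=(\zeta_1,\zeta_2)\in b\Om_+$ and let $(p,a_1,a_2)=(p(\zeta),a_1(\zeta),a_2(\zeta))$ be the triplet from \cite[Prop. 8]{BaLa09}, so that $E:=\{a_1|z_1|^p+a_2|z_2|^p<1\}$ osculates $\Om$ to order $2$ at $\zeta$. All quantities entering the two claimed formulae — the principal curvatures $\kappa_1,\kappa_2,\kappa_3$, the gradient and the Levi-type quantity $\operatorname{Re}\langle\partial\rho,\zeta\rangle$, and the one-variable data $\phi,\phi',\phi''$ at $|\zeta_1|$ — depend only on the $2$-jet of $b\Om$ at $\zeta$, hence are unchanged if $\Om$ is replaced by $E$. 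Thus it suffices to verify both identities for the explicit domain $E$, where everything can be computed in closed form.

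\medskip

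\emph{Step 2: Prove \eqref{eq:p in phi} via the Reinhardt shadow.} For $E$, the shadow curve is $\psi(r_1)=(a_2^{-1}(1-a_1 r_1^p))^{1/p}$ on $[0,a_1^{-1/p}]$, and by rotation-invariance $\phi$ agrees with $\psi$ to second order at $r_1=|\zeta_1|$. A direct differentiation gives $\psi'$, $\psi''$ in terms of $a_1,a_2,p,r_1$; substituting $r_1=|\zeta_1|$, $r_2=\psi(r_1)=|\zeta_2|$ into the quotient
\[
1+\frac{r_1\,\psi''(r_1)\,\psi(r_1)}{\psi'(r_1)\bigl(\psi(r_1)-r_1\psi'(r_1)\bigr)}
\]
the parameters $a_1,a_2$ cancel and the expression collapses to exactly $p$. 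Since $(\phi,\phi',\phi'')=(\psi,\psi',\psi'')$ at $|\zeta_1|$, this is \eqref{eq:p in phi}. This step is a routine but slightly fiddly one-variable computation; it is the natural warm-up.

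\medskip

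\emph{Step 3: Prove \eqref{eq:reln between kappa an p} by computing the invariant quantities on $E$, and conclude.} Take the canonical defining function $\rho=a_1|z_1|^p+a_2|z_2|^p-1$ (any $\cont^1$ defining function of $\Om$ near $\zeta$ has the same normalized gradient and same $\operatorname{Re}\langle\partial\rho,\zeta\rangle/\|\nabla\rho\|$ up to the relevant order, so this choice is legitimate). Then $\operatorname{Re}\langle\partial\rho(\zeta),\zeta\rangle=\tfrac{p}{2}(a_1|\zeta_1|^p+a_2|\zeta_2|^p)=\tfrac{p}{2}$, and $\|\nabla\rho(\zeta)\|$ is computed directly from the partials $\partial\rho/\partial\zeta_j = \tfrac{p}{2} a_j |\zeta_j|^{p-2}\bar\zeta_j$. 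For the curvatures: $\kappa_1,\kappa_2$ are the curvatures of the $b\Om$ in the circular directions $(i\zeta_1,0)$, $(0,i\zeta_2)$ — these are essentially the curvatures of the circles $|z_1|=|\zeta_1|$, $|z_2|=|\zeta_2|$ cut out in the respective variables, computable from $\rho$ — and $\kappa_3$ is the remaining (complex-tangential) principal curvature, which on a Reinhardt domain is tied to the concavity $\phi''$ of the shadow. Plugging these closed forms into $1+\tfrac{\kappa_3}{\kappa_1\kappa_2}\cdot\tfrac{\|\nabla\rho\|}{2\operatorname{Re}\langle\partial\rho,\zeta\rangle}$ should reproduce the same rational expression in $|\zeta_1|,\phi',\phi''$ found in Step 2, giving \eqref{eq:reln between kappa an p} and simultaneously re-deriving \eqref{eq:p in phi}. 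Because the right-hand sides of both displayed formulae are independent of the defining function, the final statement for general $\rho$ follows.

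\medskip

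\emph{Main obstacle.} The genuinely delicate point is Step 3: correctly identifying $\kappa_1,\kappa_2,\kappa_3$ as real principal curvatures of the real hypersurface $b\Om\subset\R^4$ and expressing them through $\rho$ (or through $\phi$) with the right normalizations — in particular pinning down $\kappa_3$ and checking that the directions $(i\zeta_1,0)$, $(0,i\zeta_2)$ are indeed (asymptotically, off $\mathcal Z$) principal. The second Reinhardt identity \eqref{eq:p in phi} is a cleaner route to the same number, so a sensible strategy is to establish \eqref{eq:p in phi} first and then prove \eqref{eq:reln between kappa an p} by showing the curvature combination equals the $\phi$-expression — reducing the curvature computation to a single scalar identity rather than a full principal-axis analysis.
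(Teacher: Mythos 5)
There is a genuine gap, and it sits exactly where you flag it: Step 3 is not an argument but a restatement of the claim. The identity \eqref{eq:reln between kappa an p} is precisely the assertion that the combination $\kappa_3(\kappa_1\kappa_2)^{-1}\cdot\Vert\nabla\rho\Vert/(2\operatorname{Re}\langle\partial\rho(\zeta),\zeta\rangle)$ equals $p(\zeta)-1$, i.e.\ equals the $\phi$-expression in \eqref{eq:p in phi}; saying that plugging in closed forms for $\kappa_1,\kappa_2,\kappa_3$ ``should reproduce'' that expression proves nothing until those closed forms are actually produced. Your proposal never establishes them: you do not verify that $(i\zeta_1,0)$ and $(0,i\zeta_2)$ are principal directions (this follows from the reflection symmetries $z_j\mapsto\bar z_j$ of a Reinhardt hypersurface at a point with real coordinates, but it must be said), and your heuristic that $\kappa_1,\kappa_2$ are ``essentially the curvatures of the circles'' is off by the projection factor onto the surface normal (e.g.\ $\kappa_1=\tfrac{1}{|\zeta_1|}\,|\phi'|/\sqrt{1+\phi'^2}$ in shadow coordinates), while $\kappa_3$ is only described as ``tied to $\phi''$''. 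Without these formulas, neither the full computation on the egg nor your proposed fallback (``a single scalar identity'') can even be written down. Note also that the reduction to the osculating egg in Step 1, while legitimate (all quantities depend only on the $2$-jet), buys you nothing for this step: computing the principal curvatures of $bE$ at a general boundary point is exactly as hard as doing it for a general strongly convex Reinhardt hypersurface via its graphing function.

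For comparison, the paper does not recompute any curvatures: it quotes the explicit principal-curvature formulas \cite[(4.5)]{BaLa09} in the parametrization $\zeta=(r_1(s)e^{i\theta_1},r_2(s)e^{i\theta_2})$, so that $\kappa_3(\kappa_1\kappa_2)^{-1}$ is known in closed form in terms of $s,r_1(s),r_2(s),\check p(s)$, then computes the ratio $\Vert\nabla\widetilde\rho\Vert/(2\operatorname{Re}\langle\partial\widetilde\rho(\zeta),\zeta\rangle)$ for the specific defining function $\widetilde\rho(z)=|z_2|-\phi(|z_1|)$ of \eqref{eq:B-L defining function} using $\phi'(r_1(s))=-s\,r_2(s)/((1-s)r_1(s))$, and finishes by observing that this ratio is independent of the choice of $\cont^1$ defining function. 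The second identity \eqref{eq:p in phi} is simply cited from \cite[Prop.\ 8]{BaLa09}, so your Step 2 (which is correct, and does verify cleanly for the egg) re-derives a known fact rather than supplying the missing one. To repair your proof you would either have to carry out the principal-axis analysis on $bE$ (or on $b\Om$ via $\phi$) in full, or, as the paper does, import the curvature formulas from \cite{BaLa09}.
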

Equation \eqref{eq:reln between kappa an p} is proved in Section~\ref{sub:tech tools}, while \eqref{eq:p in phi} was already proved in \cite[Prop. 8]{BaLa09}. 

Recall that $\rt$ is the class of bounded $\cont^1$-smooth convex Reinhardt domains in $\C^2$ that are strongly convex away from $\mathcal Z$. We see in Section~\ref{subsec:param} that membership in $\rt$ imposes a condition on the limiting behavior of $p$ at the axes. In terms of the graphing function $\phi$ of its shadow, $\Om\in\rt$ if and only if, in addition to $(i)$ and $(ii)$, it also satisfies
\begin{itemize}
    \item[$(iii)$] $\lim_{t\rightarrow 0^+}\phi'(t)=0$ and $
    \lim_{t\rightarrow b_1^-}\phi'(t)=-\infty$.
\end{itemize}

It's clear from \eqref{eq:p in phi} that $(iii)$ does not guarantee the continuous extension of $p$ to $b\Om$. Let
\bea\label{D:newclass}
\mathcal R'&=&\left\{\Om\in\widetilde{\mathcal R}: p\ \text{extends as a continuous function from $b\Om$ to }(1,\infty)\right\}\\
&=&
\left\{\Om\in\widetilde{\mathcal R}: (\kappa_1\kappa_2)^{-1}\kappa_3\ \text{extends as a continuous function from $b\Om$ to }(0,\infty)\right\}.\notag
\eea
Several of the results in \cite{BaLa09} are proved for domains in the class $\mathcal{R}$; see \cite[Definition 16]{BaLa09}. It can be shown that domains in $\mathcal R$ are precisely those $\Om\in\rt$ for which $(p,a_1,a_2)$ extends continuously as a function from $b\Om
$ to $(1,\infty)\times(0,\infty)\times(0,\infty)$. It is clear the $\mathcal R\subseteq \mathcal R'$. See Example~\ref{ex:rprime} for a concrete example in $\mathcal R'\setminus\mathcal R.$

For domains in $\rt$, the limiting behavior of $p$ captures some information about the regularity and contact type of $\Om$ at $\mathcal Z$. Note the following result, for instance. Since it has no bearing on the proofs of our main results, the reader is directed to \cite{Ch25} for its proof.

\begin{proposition}\label{pr:infinite type} Let $\Om\in \rt$ and $w\in b\Om\cap \mathcal{Z}$. 
\begin{enumerate}[$(a)$]
    \item If $\lim_{\zeta\rightarrow w}p\ltt\zeta\rtt$ exists and is greater than $2$, then $b\Om$ is $\mathcal{C}^2$-smooth at $w$.
    \item If $b\Om$ is $\cont^2$-smooth at $w$, then  $\limsup_{\zeta\rightarrow w}p\ltt\zeta\rtt\geq 2$.
\end{enumerate}
 Suppose $b\Om$ is $\cont^\infty$-smooth at $w$. Let $m\in\Z_+$.

\begin{enumerate}
    \item [$(c)$] Then $\Om$ is of finite type $2m$ at $w$ if and only if  $\lim_{\zeta\rightarrow w}p\ltt\zeta\rtt$ exists and is equal to $2m$. 
    \item [$(d)$] Then $\Om$ is of infinite type at $w$ if and only if $\limsup_{\zeta\rightarrow w}p(\zeta)=\infty$.
\end{enumerate}
\end{proposition}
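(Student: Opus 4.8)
The plan is to reduce everything to the behaviour of the one-variable graphing function $\phi$ near an endpoint. The coordinate swap $(z_1,z_2)\mapsto(z_2,z_1)$ is a linear automorphism of $\C^2$ that maps $\rt$ onto itself — it replaces $\phi$ by $\phi^{-1}$ — interchanges $b\Om\cap\{z_1=0\}$ with $b\Om\cap\{z_2=0\}$, and leaves $p$, finite type and $\cont^k$-smoothness at boundary points unchanged. So I may assume $w$ lies on the $z_2$-axis, and after a rotation that $w=(0,b_2)$ with $b_2=\phi(0)>0$; points $\zeta\in b\Om_+$ approaching $w$ then have $t:=|\zeta_1|\to 0^+$. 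Since $\phi(0)>0$, condition $(iii)$ makes $\rho(\zeta)=|\zeta_2|^2-\phi(|\zeta_1|)^2$ a $\cont^1$ defining function for $\Om$ near $w$, with nonvanishing differential and complex tangent line $\{(\zeta_1,0)\}$.

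The first step is a local dictionary linking the analytic behaviour of $b\Om$ at $w$ to that of $\phi$ near $0$. Because $\phi(0)>0$, $b\Om$ is $\cont^k$ at $w$ precisely when $\zeta_1\mapsto\phi(|\zeta_1|)$ is $\cont^k$ near $\zeta_1=0$; in particular, by the standard fact about radial $\cont^\infty$ functions, $b\Om$ is $\cont^\infty$ at $w$ iff $\psi(s):=\phi(\sqrt{s})$ lies in $\cont^\infty([0,\varepsilon))$, and then $\phi(t)-b_2$ has an even-power asymptotic expansion $\sum_{k\ge1}c_k t^{2k}$. For the contact geometry I would use that the D'Angelo type of a convex domain equals its line type (McNeal): since on a tangent complex line $\rho$ restricts to $b_2^2-\phi(|v_1\tau|)^2$, the type of $b\Om$ at $w$ equals $2k$, where $k$ is the order of vanishing at $0$ of $s\mapsto b_2^2-\psi(s)^2$; peeling off the nonvanishing smooth factor $b_2+\psi(s)$, this $k$ is the order of vanishing of $b_2-\psi(s)$, i.e.\ the least $k\ge1$ with $\psi^{(k)}(0)\neq0$ (the type being infinite iff $\psi$ is flat at $0$).

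The second step is to extract the asymptotics of $p$. With $g:=-\phi'$ — positive, increasing, $\cont^1$ on $(0,b_1)$, and $g(0^+)=0$ by $(iii)$ — the facts $\phi(t)\to b_2>0$ and $t\phi'(t)\to0$ turn \eqref{eq:p in phi} into $p(t)-1\sim t g'(t)/g(t)$ as $t\to0^+$. Integrating $g'/g$ over $[t,\delta]$ gives the elementary principle that $g(t)\gtrsim t^M$ when $tg'/g\le M$ near $0$ and $g(t)\lesssim t^m$ when $tg'/g\ge m$ near $0$; consequently $g$ (equivalently $\phi'$, equivalently $b_2-\phi$) vanishes to infinite order at $0$ if and only if $\limsup_{t\to0}tg'(t)/g(t)=\infty$, i.e.\ iff $\limsup_{\zeta\to w}p(\zeta)=\infty$. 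Combining the two steps: for $(a)$, $\lim p=p_0>2$ forces $g(t)/t\to0$ and $g'(t)\to0$, whence the first and second partials of $\zeta_1\mapsto\phi(|\zeta_1|)$ tend to $0$ at the origin and the function is $\cont^2$ near $0$, so $b\Om$ is $\cont^2$ at $w$; for $(b)$ (contrapositive), $\limsup p<2$ forces $\phi'(t)/t=-g(t)/t\to-\infty$, which is a second difference quotient of $\zeta_1\mapsto\phi(|\zeta_1|)$ and so rules out $\cont^2$-smoothness; for $(d)$, $\cont^\infty$-smoothness makes $\psi$ smooth, and infinite type $\Leftrightarrow\psi$ flat at $0$ $\Leftrightarrow g$ flat at $0$ (legitimate to pass between these by differentiating the expansion, since $\phi\in\cont^\infty$) $\Leftrightarrow\limsup p=\infty$; for $(c)$, finite type $2m$ gives $c_1=\dots=c_{m-1}=0\ne c_m$ with $c_m<0$, so $g\sim-2mc_m t^{2m-1}$ and $tg'/g\to2m-1$, i.e.\ $p\to2m$, while conversely $\lim p=2m$ gives $b_2-\phi(t)=\int_0^t g=t^{2m+o(1)}$, which against the even-power expansion (which cannot be flat) pins its first nonzero term at order $2m$, so the type is $2m$.

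The hard part is the local dictionary of the second paragraph. Its regularity half is delicate because radial functions behave subtly — a $\cont^k$ function of $|\zeta_1|^2$ need not arise from a $\cont^k$ profile; e.g.\ $|\zeta_1|^3$ is $\cont^2$ but not $\cont^3$ — so one must keep careful track of which of $\phi$, $\phi^2$, $\psi$ controls a given order of smoothness of $b\Om$ at the axis point. Its contact-geometry half rests on the nontrivial equality of D'Angelo type and line type for convex domains. A further point needing care is that an asymptotic expansion of $\phi-b_2$ may be differentiated term by term to one of $\phi'=-g$ only under the $\cont^\infty$ hypothesis of $(c)$ and $(d)$; in $(a)$ and $(b)$ one has access only to the crude two-sided power bounds on $g$, which is exactly why those two parts are one-sided.
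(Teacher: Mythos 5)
The paper contains no proof of Proposition~\ref{pr:infinite type} to compare yours against: it explicitly defers the proof to \cite{Ch25}, remarking that the statement has no bearing on the main results. Judged on its own terms, your argument is essentially sound. The reduction to the endpoint behaviour of $\phi$, the asymptotics $p(t)-1\sim t g'(t)/g(t)$ with $g=-\phi'$ (correct, since \eqref{eq:p in phi} gives $p-1=\frac{tg'}{g}\cdot\frac{\phi}{\phi+tg}$ and the second factor tends to $1$ because $\phi(t)\to b_2>0$ and $tg(t)\to0$), the two-sided power bounds obtained by integrating $g'/g$, the implicit-function-theorem dictionary between regularity of $b\Om$ at $w$ and of $\zeta_1\mapsto\phi(|\zeta_1|)$ at $0$, Whitney's theorem for radial functions, and McNeal's equality of D'Angelo type with line type for convex boundaries do combine to give (a)--(d); I checked the computations in (a), (b) and both directions of (c), and they go through, including the careful handling of the second partials of $\phi(|\zeta_1|)$ in (a) and the divergent second difference quotient in (b).

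One claim in your second step is overstated as written: the ``elementary principle'' only yields that if $\limsup_{t\to0}tg'(t)/g(t)<\infty$ then $g(t)\gtrsim t^M$, i.e.\ $g$ flat at $0$ implies the limsup is infinite; it does not give the converse, and for a merely $\cont^1$ increasing $g$ the converse is false (take $g(t)=te^{u(t)}$ with $u$ bounded and nondecreasing, increasing by $2^{-j}$ on intervals of length $a_j4^{-j}$ near $a_j\to0$: then $g\approx t$ is not flat, yet $tg'/g=1+tu'$ has limsup $\infty$). The only place you use this equivalence is part (d), where the missing direction must instead be routed through the $\cont^\infty$ hypothesis: if $g$ is not flat, then $\psi$ has a first nonvanishing Taylor coefficient at some order $k_0\geq 1$, and the same Taylor-with-remainder computation you already perform in the forward direction of (c) gives $tg'(t)/g(t)\to 2k_0-1$, hence $\limsup_{\zeta\to w}p(\zeta)<\infty$. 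With that implication justified via the Taylor expansion rather than the integral principle, the proof of (d), and hence of the whole proposition, closes.
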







We note one final feature of the class $\rt$, which has a significant bearing on the proofs of the main results. Given a set $E\subset \C^n$, recall that its {\em dual complement} is the set given by
\be\label{de:dual complement}
E^*=\{z\in\C^n:\langle \zeta,z\rangle\neq 1,\,\forall\zeta\in E\}.
\ee
For a detailed discussion on the significance of the dual complement of a set, see \cite[Chapter 2]{APS}. We recall from \cite[\S 7]{BaLa09} that $\rt$ is closed under the action of taking dual complements, i.e., $\Om\in\rt$ if and only if (the interior of) $\Om^*\in\rt$. The class $\mathcal R'$ is also closed under the action of taking dual complements. 

\subsection{Monge--Amp{\`e}re measures associated to domains in $\rt$}\label{sub:Leray Levi} Recall that, for a bounded convex domain $\Om\subset\C^n$, the {\em support function of $\Om$} is the function $H_\Om:\Cn\rightarrow\mathbb R$ given by 
    \be\label{eq:supp}        H_\Om(z)=\sup_{\zeta\in\Om}\operatorname{Re}\langle\zeta,z\rangle,\qquad z\in\C^n.
    \ee
Since $H_\Om$ is a convex function, one can make sense of $(dd^c H_\Om)^n$ as a Radon measure on $\C^2$; see \cite[\S 2]{BeTa76}. In particular, if $n=2$, $\nabla H_\Om\in L^\infty_{loc}(\C^2)$, and 
\be\label{eq:def of MA}
\int_{\C^2} \psi (dd^c H_\Om)^2
=-\int_{\C^2} dH_\Om \wedge d^c H_\Om \wedge dd^c \psi,\qquad
\forall \psi\in \mathcal C_c^\infty(\C^2).
\ee
In general, $(dd^cH_\Om)^n$ may be mutually singular with respect to $\omega_{std}$, the Lebesgue measure on $\C^n$. However, when $\Om$ is strongly convex, $H_\Om\in\mathcal C^2(\C^n\setminus\{0\})$ (see \cite[\S 2.5]{Sc14}) and consequently $(dd^c H_\Om)^n\approx \Vert z\Vert ^{-n} \omega_{std}$. When $\Om\in\rt$, we have that $(dd^cH_\Om)^2\ll\omega_{std}$ on $\C^2$.


\begin{proposition}\label{pr:measure equality}
   Let $\Om\in \rt$. Then, $(dd^c H_\Om)^2= \mathfrak H_\Om\omega_{std}$ as measures, where
\bea\label{eq:Def of the MA fns}
\mathfrak H_\Om(z)=\begin{cases}
    \frac{2}{\pi^2}\left(\dfrac{\partial^2 H_{\Om}}{\partial z_1 \partial {\zbar_1}}\dfrac{\partial^2 H_{\Om}}{\partial z_2 \partial{\zbar_2}}-\dfrac{\partial^2 H_{\Om}}{\partial z_1 \partial \zbar_2}\dfrac{\partial^2 H_{\Om}}{\partial z_2 \partial{\zbar_1}}\right)(z),& \text{if }z\notin\mathcal Z,\\
    0,& \text{if }z\in\mathcal Z.
    \end{cases}
\eea
\end{proposition}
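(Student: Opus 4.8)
The plan is to establish the two assertions of Proposition~\ref{pr:measure equality} separately: first that $(dd^c H_\Om)^2$ puts no mass on $\mathcal Z$, and second that away from $\mathcal Z$ it agrees with $\mathfrak H_\Om\,\omega_{std}$, with $\mathfrak H_\Om$ given by the stated Monge--Amp\`ere determinant of $H_\Om$.

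\textbf{Step 1: Regularity of $H_\Om$ off the axes.} The starting point is that $\Om\in\rt$ is strongly convex away from $\mathcal Z$, so its support function $H_\Om$ is $\cont^2$-smooth on the open cone $\C^2\setminus(\mathcal Z\cup\{0\})$ --- this is the dual statement to strong convexity of $b\Om_+$ and follows from the standard correspondence between smoothness/strict convexity of a convex body and strict convexity/smoothness of its support function (see \cite[\S2.5]{Sc14}), applied sectorially. Also, $H_\Om$ is positively $1$-homogeneous, hence Lipschitz, so $\nabla H_\Om\in L^\infty_{loc}$ and the variational formula \eqref{eq:def of MA} applies. On the open set $\C^2\setminus\mathcal Z$ (which contains $\{0\}$ only as a removed point; but the origin is negligible for a measure absolutely continuous with respect to $dd^c$ of a convex function since $H_\Om$ is Lipschitz there) the Bedford--Taylor product coincides with the classical one, so $(dd^cH_\Om)^2 = \tfrac{2}{\pi^2}\det\!\big(\partial^2 H_\Om/\partial z_j\partial\bar z_k\big)\,\omega_{std}$ there; this is exactly $\mathfrak H_\Om\omega_{std}$ on $\C^2\setminus\mathcal Z$. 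I would spell out the normalization constant $2/\pi^2$ by expanding $(dd^c H)^2$ with $d^c=\tfrac{i}{4\pi}(\bar\partial-\partial)$ and comparing to $\omega_{std}$.

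\textbf{Step 2: No mass on $\mathcal Z$.} It remains to show $(dd^cH_\Om)^2(\mathcal Z)=0$, equivalently that the measure is carried by $\C^2\setminus\mathcal Z$. Here I would use the fact (recalled in the excerpt, from \cite[\S7]{BaLa09}) that $\Om\in\rt$ satisfies condition $(iii)$: $\lim_{t\to 0^+}\phi'(t)=0$ and $\lim_{t\to b_1^-}\phi'(t)=-\infty$. Dually, condition $(iii)$ says precisely that $b\Om$ is $\cont^1$-smooth, so $\Om^*\in\rt$ as well, and $b\Om^*$ is also $\cont^1$; equivalently, $H_\Om$ is \emph{differentiable} (not merely Lipschitz) on all of $\C^2\setminus\{0\}$. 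Differentiability of $H_\Om$ is the same as strict convexity of $\Om$, which holds for domains in $\rt$. Now the key point is the Alexandrov-type / Reshetnyak-type fact that for a convex function the Monge--Amp\`ere mass of a Borel set equals the Lebesgue measure of the image of that set under the (set-valued) subgradient map --- and $\mathcal Z$, being contained in the union of two totally real $2$-planes (in real dimension, two copies of $\R^2$ sitting in $\R^4$), has vanishing image of full measure only if the gradient does not ``spread out'' there. More concretely and more in line with the paper's toolkit, I would instead argue: on the real slice, $H_\Om$ restricted to the relevant coordinate subspaces is itself the support function of a lower-dimensional convex body (the shadow-type projections of $\Om$), and the concentration of $(dd^cH)^2$ on $\mathcal Z$ would force a jump in $\nabla H_\Om$ across $\mathcal Z$, i.e.\ a corner of $\Om$ in the corresponding direction --- contradicting the $\cont^1$-smoothness of $b\Om$ that comes from condition $(iii)$. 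A cleaner route: by the mixed-Monge--Amp\`ere inequality / positivity, $(dd^cH_\Om)^2(\mathcal Z) \le$ (a dimensional constant times) the product of the one-variable Riesz masses $\Delta_{z_1} H_\Om$ and $\Delta_{z_2} H_\Om$ restricted appropriately; since $H_\Om$ is $\cont^1$ in each variable direction near $\mathcal Z$ (again from $(iii)$), these one-dimensional Riesz measures have no atoms concentrated on the complementary axis, forcing the product to vanish on $\mathcal Z$. I would then combine Steps 1 and 2: the measure $(dd^cH_\Om)^2$ equals $\mathfrak H_\Om\omega_{std}$ on $\C^2\setminus\mathcal Z$ and is zero on $\mathcal Z$, while $\mathfrak H_\Om$ was \emph{defined} to be $0$ on $\mathcal Z$; hence equality holds globally.

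\textbf{Main obstacle.} The delicate point is Step 2 --- proving that the Monge--Amp\`ere measure charges no mass on $\mathcal Z$. The subtlety is that $H_\Om$ is only $\cont^1$ (not $\cont^2$) across $\mathcal Z$ in general, so one cannot simply invoke the classical change-of-variables; one must genuinely use the weak ($\cont^1$) regularity granted by condition $(iii)$ of membership in $\rt$, and rule out Bedford--Taylor mass concentrating on the lower-dimensional set $\mathcal Z$. I expect this to be handled via the continuity/quasicontinuity properties of the Monge--Amp\`ere operator under the relevant monotone approximation (e.g.\ approximating $\Om$ from inside by strongly convex domains and using that $(dd^cH_{\Om_j})^2$ is absolutely continuous with uniformly controlled mass away from, and vanishingly small mass near, $\mathcal Z$), together with the explicit formula \eqref{eq:p in phi} showing the degeneracy of $b\Om$ at the axes is mild enough (as $\phi'\to 0$, $\phi'\to-\infty$) that no Dirac-type concentration in the gradient of $H_\Om$ can occur.
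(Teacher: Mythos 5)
Your Step 1 is essentially the paper's argument: on $\C^2\setminus\mathcal Z$ the support function $H_\Om$ is $\cont^2$ (by duality, $H_\Om$ is the Minkowski functional of $\Om^*\in\rt$, which is strongly convex off $\mathcal Z$), and there the Bedford--Taylor product reduces to the pointwise determinant with the constant $2/\pi^2$; the paper carries this out by testing against $\varphi\in C_c^\infty(\C^2\setminus\mathcal Z)$ via \eqref{eq:def of MA}, a density argument, the Riesz representation theorem and Stokes' theorem, which is the same computation in slightly different clothing.

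The genuine gap is Step 2, which you rightly flag as the delicate point but never actually prove. The mechanisms you propose do not work as stated: the subgradient-image (Alexandrov) formula you invoke computes the \emph{real} Monge--Amp\`ere measure of the convex function $H_\Om$, not the complex measure $(dd^cH_\Om)^2$, and these are genuinely different objects, so $\cont^1$-regularity or strict convexity of $\Om$ transplanted through the gradient map does not control the complex mass on $\mathcal Z$; the asserted bound of $(dd^cH_\Om)^2(\mathcal Z)$ by a product of one-variable Riesz masses ``restricted appropriately'' is not a valid inequality; and your closing paragraph is explicitly an expectation about an approximation scheme, not an argument. Moreover, the geometric input you try to use (condition $(iii)$, $\cont^1$-smoothness of $b\Om$) is not needed at all: $\mathcal Z=\{z_1z_2=0\}$ is a complex-analytic subvariety of $\C^2$, hence pluripolar, and $H_\Om$ is a Lipschitz, locally bounded plurisubharmonic function, so by Bedford--Taylor \cite[Corollary 3.5]{BeTa76} the measure $(dd^cH_\Om)^2$ places no mass on $\mathcal Z$. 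This one citation is how the paper disposes of your Step 2, and it holds for every bounded convex domain, not just those in $\rt$; replacing your Step 2 by this appeal (and keeping your Step 1) yields a complete proof.
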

\begin{proof} Let $\mu=(dd^cH_\Om)^2\Big|_{\C^2\setminus\mathcal Z}$ and $\nu=\mathfrak H_\Om\omega_{std}\Big|_{\C^2\setminus\mathcal Z}$. By \cite[Corollary 3.5]{BeTa76}, $(dd^cH_\Om)^2$ does not charge $\mathcal Z$. Thus, it suffices to show that, $\mu=\nu$ on $\C^2\setminus \mathcal Z$. Since $\mu$ and $\nu$ are Radon measures on $\C^2\setminus \mathcal Z$, by the Riesz representation theorem for positive functionals, we must show that 
    \be\label{eq: test}
        \int_{\C^2\setminus \mathcal Z} \varphi d\mu
        =
        \int_{\C^2\setminus \mathcal Z} \varphi d\nu,\quad
        \forall \varphi\in C_c(\C^2\setminus\mathcal Z).
    \ee
Since $C_c^\infty(\C^2\setminus\mathcal Z)$ is dense in $C_c(\C^2\setminus\mathcal Z)$ in the sup-norm, and $\mu$ and $\nu$ act as continuous linear functionals on $C_c(\C^2\setminus\mathcal Z)$ via integration, it suffices to establish \eqref{eq: test} for $\varphi\in C_c^\infty(\C^2\setminus\mathcal Z)$. But this follows from \eqref{eq:def of MA} and Stokes' theorem as $H_\Om$ is $\cont^2$-smooth on $\C^2\setminus\mathcal Z$.
\end{proof}

Following Demailly \cite{De85}, one can also consider Monge--Amp{\`e}re boundary measures on the boundary of a bounded convex domain $\Om\subset\C^n$. Let $\rho:\overline\Om\rightarrow (-\infty,0]$ be a continuous convex exhaustion function of $\Om$, i.e., satisfying $\{z\in\overline\Om:\rho(z)<-c\}\Subset\Om$ for all $c>0$, such that $\int_{\Om} (dd^c\rho)^n<\infty$. Then, by \cite[Theorem 3.1]{De85}, $\{(dd^c\max\{\rho,r\})^2:r<0\}$ is a family of Radon measures on $\overline\Om$ that converge weakly as $r\rightarrow 0$, and
\be\label{E:ma}
    \mu_\rho=\lim_{r\rightarrow 0}
    (dd^c\max\{\rho,r\})^2
\ee
is a Radon measure supported on $b\Om$. In particular, let
    \be\label{eq:defn MA min}
        \mu_{_\Om}=\lim_{r\rightarrow 0}
    (dd^c\max\{m_\Om-1,r\})^2,
    \ee
where $m_\Om$ is the {\em Minkowski functional} of $\Om$ given by $m_\Om(z)=\inf\{t>0:\frac{z}{t}\in \Om\}$. When the exhaustion function $\rho$ is $\mathcal C^2$-smooth on a neighborhood of $w\in b\Om$ and $\nabla\rho(w)\neq 0$, then $\mu_\rho(w)$ coincides with the so-called {\em Leray--Levi measure} associated to $\rho$ at $w$, i.e., 
\bes 
\mu_\rho(w)= j^*\ltt d^c\rho\wedge(dd^c\rho)^{n-1}\rtt(w)
={\mathcal{M}[\rho](w)\sigma_{_{\!\Om}}(w)},
\ees
where $j:b\Om\rightarrow \C^2$ is the inclusion map, $\sigma_{_{\!\Om}}$ is the surface area measure on $b\Om$, and 
\bes
\mathcal{M}[\rho]=\frac{{-}\operatorname{det}\begin{pmatrix}
    0 & {\partial\rho}/{\partial z_j}\\
    {\partial\rho}/{\partial \bar z_j} & 
    {\partial^2\rho}/{\partial z_j\partial \bar z_k}
\end{pmatrix}_{1\leq j,k\leq n}}{\pi^n\|\nabla \rho\|}.
\ees
Thus, if $\rho$ extends as a $\cont^2$-smooth defining function of $\Om$ on a neighborhood of $\overline\Om$, then $\mu_\rho\ll \sigma_{_{\!\Om}}$, and further, if $\Om$ is strongly pseudoconvex, then $\mu_\rho\approx \sigma_{_{\!\Om}}$. 

When $\Om\in\rt$, we use the fact that $m_\Om$ is $\mathcal C^2$-smooth on $b\Om_+$ and the Monge--Amp{\`e}re measures appearing on the R.H.S. of \eqref{eq:defn MA min} do not charge $\mathcal Z$ to conclude the following. 

\begin{proposition}\label{pr:boundary MA measure}
Let $\Om\in\rt$, and $\rho$ be a $\cont^1$-smooth defining function of $\Om$ that is $\cont^2$-smooth on $b\Om_+$. Then,
$\mu_{_\Om}=\left(\mathds{1}_{b\Om_+}\mathcal{M}[{m_\Om}]\right)\sigma_{_{\!\Om}}\approx \mathds{1}_{b\Om_+}j^*\ltt d^c\rho\wedge(dd^c\rho)\rtt.$
\end{proposition}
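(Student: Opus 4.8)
The statement bundles two claims: the identity of measures $\mu_\Om=(\mathds{1}_{b\Om_+}\mathcal{M}[m_\Om])\sigma_{_{\!\Om}}$ on $b\Om$, and the equivalence $(\mathds{1}_{b\Om_+}\mathcal{M}[m_\Om])\sigma_{_{\!\Om}}\approx \mathds{1}_{b\Om_+}j^*\ltt d^c\rho\wedge dd^c\rho\rtt$. The plan is to establish them in that order. First I would check that Demailly's construction \eqref{E:ma}, \eqref{eq:defn MA min} genuinely applies to the convex exhaustion function $\rho_0:=m_\Om-1$: being a gauge, $m_\Om$ is convex on all of $\C^2$, so $(dd^cm_\Om)^2$ is a Radon measure on $\C^2$, hence finite on the bounded set $\overline\Om$, whence $\int_\Om(dd^c\rho_0)^2<\infty$ and $\mu_\Om=\mu_{\rho_0}$ is a well-defined Radon measure supported on $b\Om$. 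On $b\Om_+$ the function $m_\Om$ is $\cont^2$-smooth and $\nabla\rho_0=\nabla m_\Om$ is nowhere zero (as $\rho_0$ is a defining function there), so the identification of $\mu_{\rho_0}$ with the Leray--Levi measure at smooth boundary points, recalled just before the statement, gives $\mu_\Om|_{b\Om_+}=\mathcal{M}[\rho_0]\,\sigma_{_{\!\Om}}|_{b\Om_+}=\mathcal{M}[m_\Om]\,\sigma_{_{\!\Om}}|_{b\Om_+}$, the additive constant being irrelevant to $\mathcal{M}$.

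It remains to show that $\mu_\Om$ puts no mass on $\mathcal Z\cap b\Om$ (which is a union of two circles); this is where the non-charging property is used. The set $\mathcal Z=\{z_1z_2=0\}$ is a complex-analytic variety, hence pluripolar, and each approximant $(dd^c\max\{m_\Om-1,r\})^2$ is the complex Monge--Amp{\`e}re measure of the continuous, locally bounded, plurisubharmonic function $\max\{m_\Om-1,r\}$, so by \cite[Cor.~3.5]{BeTa76} it assigns no mass to $\mathcal Z$. Since $\max\{m_\Om-1,r\}\uparrow(m_\Om-1)^+$ on all of $\C^2$ as $r\to0^-$, Bedford--Taylor's continuity theorem for increasing sequences forces $(dd^c\max\{m_\Om-1,r\})^2\to(dd^c(m_\Om-1)^+)^2$ weakly on $\C^2$; using that these approximants agree with the fixed measure $(dd^cm_\Om)^2$ off $\overline\Om$ and restrict to $\Om$ with weak limit $(dd^c0)^2=0$, a short computation identifies $\mu_\Om$ with the restriction to $b\Om$ of $(dd^c(m_\Om-1)^+)^2$. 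As $(m_\Om-1)^+$ is again locally bounded and plurisubharmonic, $(dd^c(m_\Om-1)^+)^2$ charges no pluripolar set, so $\mu_\Om(\mathcal Z\cap b\Om)\le(dd^c(m_\Om-1)^+)^2(\mathcal Z)=0$. Together with the first paragraph this yields $\mu_\Om=(\mathds{1}_{b\Om_+}\mathcal{M}[m_\Om])\sigma_{_{\!\Om}}$.

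For the equivalence it suffices to produce a function $h$, bounded above and below by positive constants on $b\Om$, with $\mathds{1}_{b\Om_+}j^*\ltt d^c\rho\wedge dd^c\rho\rtt=h^2\mu_\Om$. I would take $h:=\rho/\rho_0=\rho/(m_\Om-1)$ on a neighborhood of $b\Om$. Since $\rho$ and $\rho_0$ are $\cont^1$ defining functions of $\Om$, both vanishing to first order on $b\Om$ with non-vanishing gradient, their ratio extends to a continuous strictly positive function on the compact set $b\Om$ (its boundary value being the positive ratio of normal derivatives), so $c_1\le h\le c_2$ on $b\Om$ for some $c_1,c_2>0$; moreover $h$ is $\cont^1$ on $b\Om_+$, where $\rho$ and $\rho_0$ are $\cont^2$. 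On $b\Om_+$ the transformation law for the Leray--Levi form under $\rho=h\rho_0$ gives $j^*\ltt d^c\rho\wedge dd^c\rho\rtt=h^2\,j^*\ltt d^c\rho_0\wedge dd^c\rho_0\rtt$: in $j^*(d^c\rho)$ and $j^*(dd^c\rho)$ the terms carrying a factor $\rho_0$ vanish on $b\Om$, the term $j^*(d\rho_0)\wedge j^*(d^ch)$ vanishes because $j^*d\rho_0=d(j^*\rho_0)=0$, and the surviving cross term $h\,j^*(d^c\rho_0)\wedge j^*(dh)\wedge j^*(d^c\rho_0)$ vanishes by repetition of a $1$-form factor, leaving only $h^2\,j^*(d^c\rho_0)\wedge j^*(dd^c\rho_0)$. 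Since $j^*\ltt d^c\rho_0\wedge dd^c\rho_0\rtt=\mathcal{M}[m_\Om]\sigma_{_{\!\Om}}$ on $b\Om_+$, we obtain $\mathds{1}_{b\Om_+}j^*\ltt d^c\rho\wedge dd^c\rho\rtt=h^2(\mathds{1}_{b\Om_+}\mathcal{M}[m_\Om])\sigma_{_{\!\Om}}=h^2\mu_\Om$, and $h^2\approx1$ on $b\Om$ finishes the proof.

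The one genuinely delicate point is the vanishing of $\mu_\Om$ on $\mathcal Z\cap b\Om$: because a weak limit of measures can concentrate mass on a set that is null for every approximant, one cannot simply pass the non-charging property to the limit, and must instead exploit the global monotone approximation $\max\{m_\Om-1,r\}\uparrow(m_\Om-1)^+$ on $\C^2$ together with the Bedford--Taylor fact that the complex Monge--Amp{\`e}re measure of a locally bounded plurisubharmonic function puts no mass on pluripolar sets. Everything else is bookkeeping with the Leray--Levi form and elementary boundedness estimates.
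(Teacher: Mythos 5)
Your proof is correct in substance and follows the same two-ingredient outline that the paper only sketches in the sentence preceding the proposition: the Demailly/Leray--Levi identification of $\mu_\Om$ with $\mathcal M[m_\Om]\,\sigma_\Om$ on $b\Om_+$, where $m_\Om$ is $\cont^2$ with nonvanishing gradient, plus the Bedford--Taylor fact that Monge--Amp{\`e}re measures of locally bounded plurisubharmonic functions do not charge the pluripolar set $\mathcal Z$. Where you go beyond the paper is in how the second ingredient is applied: as you note, the bare statement that each measure on the right-hand side of \eqref{eq:defn MA min} gives no mass to $\mathcal Z$ does not by itself control the weak limit, and your fix --- identifying $\mu_\Om$ with $(dd^c(m_\Om-1)^+)^2\big|_{b\Om}$ via the increasing convergence $\max\{m_\Om-1,r\}\uparrow(m_\Om-1)^+$ on all of $\C^2$ and then applying the non-charging property of \cite{BeTa76} to the limit function --- is a clean and correct way to close that gap. (In the bookkeeping for that identification you also implicitly use that $(dd^cm_\Om)^2$ puts no mass on $b\Om$, which holds for the same two reasons: absolute continuity off $\mathcal Z$, since $m_\Om$ is $\cont^2$ there, and non-charging of $\mathcal Z$.)

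The one step I would tighten is the transformation law. With $\rho=h\,(m_\Om-1)$, the quotient $h$ of two defining functions that are only $\cont^2$ near $b\Om_+$ is in general merely $\cont^1$ up to $b\Om_+$, so the term $\rho_0\,dd^ch$ in your product-rule expansion of $dd^c\rho$ is not classically defined there, and discarding it ``because $\rho_0=0$ on $b\Om$'' is not automatic: a priori $dd^ch$ could blow up like $1/\rho_0$ as one approaches the boundary. The conclusion $j^*\ltt d^c\rho\wedge dd^c\rho\rtt=h^2\,j^*\ltt d^cm_\Om\wedge dd^cm_\Om\rtt$ on $b\Om_+$, with $h=\Vert\nabla\rho\Vert/\Vert\nabla m_\Om\Vert$ on the boundary, is nevertheless correct and standard; it can be justified without ever differentiating $h$ twice, for instance via the bordered-Hessian formula for $\mathcal M[\rho]$ together with the fact that the Levi forms of two $\cont^2$ defining functions, restricted to the complex tangent space at a boundary point, are proportional with factor $\Vert\nabla\rho\Vert/\Vert\nabla m_\Om\Vert$ (an intrinsic, first-order statement), or by a Taylor-expansion argument showing $\rho_0\,dd^ch\rightarrow 0$ at $b\Om_+$. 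With that substitution, your remaining observations --- that $h$ extends to a positive continuous function on the compact set $b\Om$ (using that $m_\Om-1$ is a genuine $\cont^1$ defining function because $\Om\in\rt$ has $\cont^1$ boundary), hence $h^2\approx 1$ --- complete the equivalence exactly as you say, so this is a fixable wrinkle rather than a genuine gap.
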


We end this section with a connection between the two measures appearing above. This is an analogue of integration using polar coordinates and was proved by Lindholm for strongly convex domains; see \cite[Lemma 3.8]{Li02}.

\begin{proposition}\label{pr:cov}
    Let $\Om\in\rt$. The map $T_\Om:(0,\infty)\times b\Om\rightarrow \C^2\setminus\{0\}$ given by
    \be\label{eq:polar coordinates}
    T_\Om( r,\zeta)=2r\left(\partl{m_\Om}{z_1}(\zeta),\partl{m_\Om} {z_2}(\zeta)\right)
    \ee
    is a homeomorphism from $(0,\infty)\times b\Om$ onto $\C^2\setminus\{0\}$ that restricts to a $\cont^1$-diffeomorphism from $(0,\infty)\times b\Om_+$ onto $\C^2\setminus\mathcal Z$. 
    Moreover, if $\psi$ is a $(dd^cH_\Om)^2$-integrable function, then
    \be\label{eq:cov}
    \int_{\C^2} \psi(z) \ltt dd^c H_\Om\rtt^2(z)=\int_0^\infty\int_{b\Om} \psi\ltt T_\Om \ltt r  ,\zeta\rtt\rtt r\,d\mu_\Om(\zeta) dr.
    \ee
\end{proposition}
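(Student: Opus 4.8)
The plan is to factor $T_\Om$ through the natural bijection $s\colon b\Om\to b\Om^*$ given by $s(\zeta)=2\,\partial m_\Om(\zeta)$, where $\partial m_\Om=\left(\partial m_\Om/\partial z_1,\partial m_\Om/\partial z_2\right)$, and then to reduce the integral identity, away from the axes, to the computation already carried out by Lindholm for strongly convex domains. By Euler's identity applied to the $1$-homogeneous real-valued function $m_\Om$, one has $\operatorname{Re}\langle\zeta,\partial m_\Om(\zeta)\rangle=\tfrac12 m_\Om(\zeta)$, so $\operatorname{Re}\langle\zeta,s(\zeta)\rangle=1$; since the complex hyperplane $\{w:\operatorname{Re}\langle w,s(\zeta)\rangle=1\}$ is the supporting hyperplane of $\Om$ at $\zeta$, it follows that $s(\zeta)\in b\Om^*$, and $s$ is precisely the natural bijection $b\Om\to b\Om^*$ discussed in \cite[\S 7]{BaLa09}. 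Because $\Om^*\in\rt$, the map $s$ restricts to a $\cont^1$-diffeomorphism $b\Om\setminus\mathcal Z\to b\Om^*\setminus\mathcal Z$: no point of $b\Om\setminus\mathcal Z$ is sent to $\mathcal Z$, since the tangent line to the strongly concave graph $\gamma_\Om$ at an interior point is neither horizontal nor vertical, so both components of $\partial m_\Om$ are nonzero there; and the differential of $s$ at such a $\zeta$ is, up to a fixed $\R$-linear isomorphism of $\C^2$, the Hessian of $m_\Om$ restricted to the tangent hyperplane $T_\zeta(b\Om)$, which is positive definite by strong convexity off $\mathcal Z$. Since $T_\Om(r,\zeta)=r\,s(\zeta)$, and the radial map $(r,\omega)\mapsto r\omega$ is a homeomorphism $(0,\infty)\times b\Om^*\to\C^2\setminus\{0\}$ (inverse $w\mapsto(m_{\Om^*}(w),\,w/m_{\Om^*}(w))$) that is a $\cont^1$-diffeomorphism over $\C^2\setminus\mathcal Z$ because $m_{\Om^*}=H_\Om\in\cont^2(\C^2\setminus\mathcal Z)$, composing yields the asserted homeomorphism and $\cont^1$-diffeomorphism properties of $T_\Om$.

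For the integral formula, I would first remove the axes from both sides. By Proposition~\ref{pr:measure equality} (equivalently \cite[Cor. 3.5]{BeTa76}), $(dd^cH_\Om)^2$ does not charge $\mathcal Z$, so the left-hand side of \eqref{eq:cov} equals $\int_{\C^2\setminus\mathcal Z}\psi\,(dd^cH_\Om)^2$; by Proposition~\ref{pr:boundary MA measure}, $\mu_\Om$ does not charge $b\Om\cap\mathcal Z$ and coincides with $\mathcal M[m_\Om]\,\sigma_\Om$ on $b\Om\setminus\mathcal Z$, so the right-hand side equals $\int_0^\infty\int_{b\Om\setminus\mathcal Z}\psi(T_\Om(r,\zeta))\,r\,\mathcal M[m_\Om](\zeta)\,d\sigma_\Om(\zeta)\,dr$. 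On $\C^2\setminus\mathcal Z$ one has $H_\Om\in\cont^2$, hence $(dd^cH_\Om)^2=\mathfrak H_\Om\,\omega_{std}$ there with $\mathfrak H_\Om$ a smooth density, so it suffices to prove the pushforward identity
\[
(T_\Om)_*\bigl(dr\otimes r\,\mathcal M[m_\Om]\,\sigma_\Om\bigr)=\mathfrak H_\Om\,\omega_{std}\qquad\text{on }\C^2\setminus\mathcal Z,
\]
after which the general statement follows for nonnegative $\psi$ from the change-of-variables theorem for the $\cont^1$-diffeomorphism $T_\Om\colon(0,\infty)\times(b\Om\setminus\mathcal Z)\to\C^2\setminus\mathcal Z$ (both sides possibly infinite), and then for $(dd^cH_\Om)^2$-integrable $\psi$ by decomposing $\psi$ into real/imaginary and positive/negative parts. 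The displayed identity is local on $\C^2\setminus\mathcal Z$, and there it is exactly the content of \cite[Lemma~3.8]{Li02}: the Jacobian of $T_\Om$ at $(r,\zeta)$ is computed from the differential of $\partial m_\Om$ (which is the complex Hessian of $m_\Om$) together with the reciprocal duality $2\,\partial H_\Om\circ s=\mathrm{id}$ on $b\Om$ --- valid because the supremum defining $H_\Om$ at $s(\zeta)$ is attained at $\zeta$ --- and the $(-2)$-homogeneity of $\mathfrak H_\Om$ in the radial variable; Lindholm's argument invokes $\cont^2$-regularity of $H_\Om$ only in a neighborhood of the point in question, which we have on all of $\C^2\setminus\mathcal Z$, so it transfers verbatim.

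The main difficulty here is not any single estimate but making the bookkeeping at the axes rigorous: one must verify that $\mathcal Z\subset\C^2$ and $b\Om\cap\mathcal Z\subset b\Om$ are negligible for every measure involved --- this is where $\Om\in\rt$, rather than strong convexity as in \cite{Li02}, enters, through Propositions~\ref{pr:measure equality} and \ref{pr:boundary MA measure} --- and that $T_\Om$ carries $(0,\infty)\times(b\Om\setminus\mathcal Z)$ onto exactly $\C^2\setminus\mathcal Z$, so that the $\cont^1$ change of variables applies without ever meeting the singular locus.
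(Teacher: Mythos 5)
Your proposal is correct and follows essentially the same route as the paper: factor $T_\Om$ through the duality map $\zeta\mapsto 2\partial m_\Om(\zeta)$ onto $b\Om^*$ (using $m_{\Om^*}=H_\Om$), discard $\mathcal Z$ via Propositions~\ref{pr:measure equality} and~\ref{pr:boundary MA measure}, and settle the Jacobian identity off the axes by an external computation. The only differences are bookkeeping ones — you cite \cite{BaLa09} for the bijectivity of $\zeta\mapsto 2\partial m_\Om(\zeta)$ (which the paper instead verifies directly by composing with the analogous map for $\Om^*$ and using strict convexity) and \cite{Li02} for the change-of-variables computation where the paper invokes \cite{Be93} — and these do not affect correctness.
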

    \begin{proof} Let $T^1_\Om(\cdot)=T_\Om(1,\cdot)$. 
 Since $\Om$ is Reinhardt and $m_\Om$ is $1$-homogenous, we have that $\left<T^1_\Om(\zeta),\zeta\right>=1$, for all $\zeta\in b\Om$. Thus, $T^1_\Om$ maps $b\Om$ into $b\Om^*$. Similarly, $T^1_{\Om^*}$ maps $b\Om^*$ into $b\Om$. So, for any $w\in b\Om^*$, $\langle T_\Om^1\ltt T^1_{\Om^*}(w)\rtt,T^1_{\Om^*}(w)\rangle=1=\langle T^1_{\Om^*}(w),w\rangle.$ By \cite[Proposition 18]{Li02}, the Minkowski functional of $\Om^*$ is $H_\Om$. Thus, abusing notation, we have  that $T_{\Om^*}(r,w)=2r\partial H_\Om(w)$. Now, the complex line $\ell_w=\{z\in\C^2:\langle 2\partial H_\Om(w),z\rangle=1\}$ is tangential to $b\Om^*$ at $w$. By the strict convexity of $b\Om^*$, $\ell_w$ cannot intersect $b\Om^*$ at any point other than $w$. Thus, it must be that $T^1_\Om\ltt T^1_{\Om^*}(w)\rtt=w$ for all $w\in b\Om^*$. Similarly, $T^1_{\Om^*}\ltt T^1_\Om(\zeta)\rtt=\zeta$ for all $\zeta\in b\Om$. Thus, $T_\Om^1$ is a continuous bijective map from $b\Om$ onto $b\Om^*$. Moreover, 
the unique complex tangent space at any $\zeta\in b\Om$ is parallel to the axes if and only if $\zeta\in\mathcal Z$. Since $m_\Om$ is $\cont^2$-smooth off of $\mathcal Z$, $T_\Om^1$ maps $b\Om_+$ $\cont^1$-smoothly onto $b\Om^*_+$. This yields the first claim. 

To prove \eqref{eq:cov}, let $T_{+}$ denote the restriction of $T_\Om$ to  $(0,\infty)\times b\Om_+$. Then, following the computation presented in \cite[\S 8]{Be93}, where we set $\rho=m_\Om$ and $\varphi=H_\Om$, we have that
    \bes
T_{+}^*\ltt\mathfrak {H}_\Om d\omega_{std}\rtt= r dr\wedge j^*\ltt d^c m_\Om\wedge dd^c m_\Om\rtt,
    \ees
where $\mathfrak{H}_\Om$ is as in Proposition~\ref{pr:measure equality}. 
The claim now follows from Propositions~ \ref{pr:measure equality} and ~\ref{pr:boundary MA measure}. 
       \end{proof}

\subsection{ Hardy spaces on domains in $\rt$} Recall that $\mathcal H^2(\Om,\mu)$ is defined as the $L^2(b\Om,\mu)$-closure of $\mathcal A(\Om)=\cont(\overline\Om)\cap\hol(\Om)$. Since convex domains have the Mergelyan property, one can also define $\mathcal H^2(\Om,\mu)$ to be the $L^2(b\Om,\mu)$-closure of holomorphic polynomials, as done in \cite{BaLa09}. We comment briefly on what it means for this space to be a reproducing kernel Hilbert space of holomorphic functions on $\Om$.

Suppose $\mathcal A(\Om)$ is strongly admissible in the sense of \cite[Definition~3.4]{GGLV21}, then the map $F|_{b\Om}\mapsto F$ extends from $\mathcal A(\Om)$ to an isometric isomorphism $\mathcal I:\mathcal H^2(\Om,\mu)\rightarrow \mathfrak X(\Om,\mu)$, where $\mathfrak X(\Om,\mu)$ is a reproducing kernel Hilbert space of holomorphic functions on $\Om$ containing $\mathcal A(\Om)$. This gives a unique function $s:\Om\times \text{supp}\:\mu\rightarrow \C$, with $s(\cdot,w)\in\mathfrak X(\Om,\nu)$ for all $w\in\text{supp}\:\mu$ and $s(z,\cdot)\in\mathcal H^2(\Om,\mu)$ for all $z\in\Om$, such that 
    \bes        (\mathcal If)(z)=\int_{b\Om}f(w)s(z,w)d\mu(w),\quad z\in\Om, f\in\mathcal H^2(\Om,\mu).    \ees
This extends to a well-defined bounded transformation $\mathbb S:L^2(b\Om,\mu)\rightarrow\mathfrak X(\Om,\mu)$ that ``produces'' and ``reproduces'' holomorphic functions:
\bes
    \mathbb Sf(z)=\int_{b\Om}f(w)s(z,w)d\mu(w),\quad z\in\Om.
\ees
Further, a projection operator $\mathbb S_b:L^2(b\Om,\mu)\rightarrow\mathcal H^2(\Om,\mu)$ can be defined as follows. 
\bes
\mathbb S_bf= \mathcal I^{-1}(\mathbb Sf),\quad f\in L^2(b\Om,\mu).    \ees

In the case when $\Om$ is $\cont^2$-smooth and convex, and $\mu=\sigma$ is the surface area measure, $\mathcal A$ is indeed strongly admissible, $\mathbb S$ and $\mathbb S_b$ are bounded operators on $L^2(b\Om,\sigma)$,  $\mathbb S_bf(\zeta)=\lim_{r\rightarrow 1^-}\mathbb Sf(r\zeta)$ for $\sigma_\Om$-a.e. $\zeta\in b\Om$ and $f\in L^2(b\Om,\sigma)$, and $\mathbb S_b$ is in fact the (orthogonal) Szeg{\H o} projection from $L^2(b\Om,\sigma)$ onto $\mathcal H^2(\Om,\sigma)$; see \cite{AGV14}. For a general convex domain $\Om\subset\Cn$ and $\mu=\mu_\rho$ as in \eqref{E:ma}, the obvious candidate for $\mathfrak X(\Om,\mu)$ is the Poletsky--Stessin Hardy space on $\Om$ corresponding to $\rho$; see \cite{PoSt08}. However, the strong admissibility of $\mathcal A$ or the existence of boundary values for functions in $\mathfrak X(\Om,\mu)$ is not known in this general setting. For $\Om\in\rt$, we give an explicit description of $\mathfrak X(\Om,\mu)$ using series expansions in Section~\ref{subsec:series expansion}.

\section{Some technical tools}\label{sub:tech tools}

Our computations are facilitated by a special parametrization of domains in $\rt$. We describe the relevant function spaces and operators in terms of this parametrization in this section.
\subsection{\textbf{An alternate parametrization}}\label{subsec:param}  Let $\Om$ be a bounded convex Reinhardt domain in $\C^2$ that is strongly convex away from $\mathcal Z$. Let $\phi,b_1$  {and} $b_2$ be as granted by (\ref{eq:graph param gamma}). Since $\Om$ is Reinhardt, any parameterization of the curve $\gamma_\Om$  induces a parameterization of $b\Om$. Thus, the parameterization of $\gamma_\Om$ given in (\ref{eq:graph param gamma}) induces a parameterization of $b\Om$. However, it is fruitful to consider a different parameterization of $\gamma_\Om$ that was introduced in \cite[\S 2]{BaLa09}. Let 
\bea\label{eq:param s}
s(r)=
\begin{cases}
\frac{-r\phi^\prime (r)}{\phi(r)-r\phi^\prime(r)},& r\in (0,b_1),\\
0,& r=0,\\
1,& r=b_1.
\end{cases}
\eea
 Then, due to properties $(i)$, $(ii)$ and $(iii')$ of $\phi$, $s:[0,b_1]\rightarrow[0,1]$ is a continuous monotone function that is $\cont^1$-smooth on $\ltt 0,b_1\rtt$; see the discussion after Prop. 8 in \cite{BaLa09}. Denote $s^{-1}$ by $r_1$. Then, we have that
\be\label{eq:full parameterization}
b\Om=\left\{\left(r_1(s)e^{i\theta_1},r_2(s)e^{i\theta_2}\right):0\leq s\leq 1,(\theta_1,\theta_2)\in [0,2\pi)\times[0,2\pi)\right\},
\ee
where $r_2=\phi\circ r_1$. The functions, $r_1$ and $r_2$ are related to the exponent function $p$ from (\ref{eq:reln between kappa an p}) as follows. Let $\check{p}:(0,1)\rightarrow (1,\infty)$ be given by $\check{p}(s)=p\ltt r_1(s),r_2(s)\rtt$. Then, for $s\in(0,1)$,
\begin{align}
&r_1(s)=b_1\exp\left(-\int_s^1\frac{dt}{t{\check{p}}(t)}\right),\label{eq:r_1 in s} \\
&r_2(s)=b_2\exp\left(-\int_0^s\frac{dt}{(1-t)\check{p}(t)}\right)\label{eq:r_2 in s}.
\end{align}
Moreover,
\begin{align}\label{eq:Two integral condition}
    \int_0^1 \frac{dt}{t{\check{p}}(t)} = 
    \int_0^1 \frac{dt}{(1-t){\check{p}}(t)} =\infty.
\end{align}
If $\Om\in\rt$, then in addition to \eqref{eq:Two integral condition},
\begin{align}\label{eq:Four integral condition}
    \int_0^1 \left(1-\frac{1}{\check{p}(t)}\right) \frac{dt}{t} =  
    \int_0^1 \left(1-\frac{1}{\check{p}(t)}\right) \frac{dt}{(1-t)} = \infty.
\end{align}
Conversely, by \cite[Theorem 9]{BaLa09}, each tuple $\ltt\check{p},b_1,b_2\rtt$, where $b_1,b_2>0$ and $\check{p}:(0,1)\rightarrow(1,\infty)$ is a continuous function satisfying \eqref{eq:Two integral condition} and \eqref{eq:Four integral condition}, determines a unique $\Om\in\rt$.

 \begin{example}\label{ex:rprime} Domains in $\mathcal R'$ (see \eqref{D:newclass}) are characterized by tuples of the form $(\check{p},b_1,b_2)$, where $b_1,b_2$ are positive constants and $\check{p}:[0,1]\rightarrow(1,\infty)$ is a continuous function which satisfies \eqref{eq:Two integral condition} and \eqref{eq:Four integral condition}. 
By \cite[Theorem 19]{BaLa09}, the additional condition
\be\label{eq:rcond}
    \int_0^1\ltt\frac{1}{\check{p}(s)}-\frac{1}{\check{p}(0)}\rtt\frac{ds}{s}<\infty \quad \text{and}\quad  \int_0^1\ltt\frac{1}{\check{p}(s)}-\frac{1}{\check{p}(1)}\rtt\frac{ds}{1-s}<\infty
\ee
characterizes domains in $\mathcal R$. Thus, $\mathcal R\subseteq\mathcal R'$. The tuple $(\check{p},1,1)$, where 
\[
\check{p}(s)=2+\frac{1}{\log\ltt\frac{10}{s}\rtt},\quad s\in[0,1],
\]
satisfies \eqref{eq:Two integral condition} and \eqref{eq:Four integral condition}, but does not satisfy the integral conditions in \eqref{eq:rcond}. Hence the corresponding domain is in $\mathcal R'$, but not in $\mathcal{R}$. 
\end{example}

The parametrization discussed above also yields a quick proof of Lemma (\ref{le:curvature and p}).

\begin{proof}[Proof of Lemma~\ref{le:curvature and p}] Let $\zeta\in b\Om_+$. By \eqref{eq:full parameterization}, we have that
$\zeta=\ltt r_1(s)e^{i\theta_1},r_2(s)e^{i\theta_2}\rtt$ for some $\ltt s,\theta_1,\theta_2\rtt\in (0,1)\times[0,2\pi)^2$. From \cite[(4.5)]{BaLa09}, the three principal curvatures at $\zeta$ are given by
\begin{align*}
\kappa_1\ltt\zeta\rtt&=\frac{s}{{r_1(s)}^2\sqrt{\ltt \frac{s}{r_1(s)}\rtt^2+\ltt \frac{1-s}{r_2(s)}\rtt^2}},\\
\kappa_2\ltt\zeta\rtt&=\frac{1-s}{{r_2(s)}^2\sqrt{\ltt \frac{s}{r_1(s)}\rtt^2+\ltt \frac{1-s}{r_2(s)}\rtt^2}},\\
\kappa_3\ltt\zeta\rtt&=\ltt\check{p}(s)-1 \rtt\frac{s(1-s)}{{r_1(s)}^2 {r_2(s)}^2}\ltt\ltt \frac{s}{r_1(s)}\rtt^2+\ltt \frac{1-s}{r_2(s)}\rtt^2\rtt^{-\frac{3}{2}}.
\end{align*}
  Since $p$ is rotation-invariant in each coordinate, $p\ltt\zeta\rtt=\check{p}(s)$. Thus,
   \be\label{eq:curvature p relation wrt s}
p(\zeta)=1+\ltt\frac{\kappa_3}{\kappa_1\kappa_2}\ltt\zeta\rtt\times \sqrt{\ltt \frac{s}{r_1(s)}\rtt^2+\ltt \frac{1-s}{r_2(s)}\rtt^2}
    \rtt.
    \ee
Let $\phi$ be as in \eqref{eq:graph param gamma}, and \be\label{eq:B-L defining function}
\widetilde\rho(z_1,z_2)=|z_2|-\phi(|z_1|),\qquad (z_1,z_2)\in \overline{\mathbb B^1(b_1)}\times\C.
\ee
Note that $\widetilde\rho$ is a $\mathcal{C}^2$-smooth defining function of $\Om$ near any $\zeta\in b\Om_+$. By \cite[(2.23)]{BaLa09}, $\phi^{'}\ltt{r_1(s)}\rtt=-\frac{sr_2(s)}{(1-s)r_1(s)}$. 
Thus, for this specific choice of defining function, we obtain that
    \bes
    \frac{\Vert\nabla \widetilde\rho(\zeta)\Vert}{2\operatorname{Re}\left<\partial{\widetilde\rho}(\zeta),\zeta\right>}=\frac{\sqrt{1+\phi^{'}\ltt{r_1(s)}\rtt^2}}{r_2(s)-r_1(s) \phi^{'}\ltt{r_1(s)}\rtt}=\sqrt{\ltt \frac{s}{r_1(s)}\rtt^2+\ltt \frac{1-s}{r_2(s)}\rtt^2}.
    \ees
    Combining this with \eqref{eq:curvature p relation wrt s}, we obtain \eqref{eq:B-L defining function} for this specific choice of defining. However, $\frac{\Vert\nabla \rho(\zeta)\Vert}{2\operatorname{Re}\left<\partial{\rho}(\zeta),\zeta\right>}$ is independent of the choice of $\cont^1$-defining function near $\zeta$. Our proof is complete.
    
\end{proof}

 The above parameterization transforms particularly well under the action of taking the dual complement of $\Om$; see \cite[\S 7]{BaLa09}. Let $\ltt\check{p}^{*},b_1^{*},b_2^{*}\rtt$ be the tuple corresponding to $\Om^*$. Then \begin{align}\label{eq:pp*bb*}
\frac{1}{\check{p}}+\frac{1}{\check{p}^*}\equiv 1\,\,\text{and}\,\,
b_1^*b_1= b_2^{*}b_2=1.
\end{align}
Consequently, $
    b\Om^*=\left\{\left(r_1^*(s)e^{i\theta_1},r_2^*(s)e^{i\theta_2}\right):0\leq s\leq 1,(\theta_1,\theta_2)\in [0,2\pi)\times[0,2\pi)\right\}$,
where
\begin{align}\label{eq:r_1^* r_2^* in s}
r_1^*(s)=\frac{s}{r_1(s)}\,\,\text{and}\,\,
r_2^*(s)=\frac{(1-s)}{r_2(s)}, \quad \text{for all }s\in[0,1].
\end{align}
To compute the map $T_\Om:(0,\infty)\times b\Om\rightarrow\C^2$ as in \eqref{eq:polar coordinates}, observe that as $\Om$ is Reinhardt and $m_\Om$ is $1$-homogeneous $2\left<\partial m_\Om(\zeta),\zeta\right>=1,\,\forall \zeta\in b\Om$; see \cite[Proposition 18]{Li02}. Thus, for any $\zeta\in b\Om_+$,
\bes
2\partl{m_\Om}{z_l}(\zeta)=\frac{\partl{m_\Om}{z_l}(\zeta)}{\left<\partial m_\Om(\zeta),\zeta\right>}=\frac{\partl{\widetilde\rho}{z_l}(\zeta)}{\left<\partial \widetilde\rho(\zeta),\zeta\right>},\quad  l=1,2,
\ees
where $\widetilde\rho$ is as in \eqref{eq:B-L defining function}, and the last equality holds since both $m_\Om-1$ and $\Tilde{\rho}$ are local defining functions of $b\Om_+$. Thus, for $\zeta=\ltt r_1(s)e^{i\theta_1},r_2(s)e^{i\theta_2}\rtt$,
\be\label{eq:TOmega}
T_\Om(\zeta)=r\ltt\frac{s}{r_1(s)}e^{-i\theta_1},\frac{(1-s)}{r_2(s)}e^{-i\theta_2}\rtt=r \ltt r_1^*(s)e^{-i\theta_1},r_2^*(s)e^{-i\theta_2}\rtt.
\ee

\subsection{Series expansions for Hardy and Bergman-space functions}\label{subsec:series expansion}
It is well-known that the functions in the Hardy space of the unit disc are precisely those whose Fourier expansions have no negative Fourier coefficients. Due to the toric symmetry and convexity of domains in $\rt$, an analogous statement holds for functions in Hardy spaces on such domains. To see this, let $\vartheta:(0,1)\times [0,2\pi)^2\rightarrow b\Om_+$ denote the parametrization given by $(s,\theta_1,\theta_2)\mapsto \left(r_1(s)e^{i\theta_1},r_2(s)e^{i\theta_2}\right)$. Then, by Proposition~\ref{pr:boundary MA measure} and \cite[(4.2)]{BaLa09},
    \be\label{eq:boundary MA wrt param}
        \left(\vartheta^*\mu_\Om\right)(s,\theta_1,\theta_2)=\frac{1}{16\pi^2}\:ds\,d\theta_1 d\theta_2.
    \ee
 Thus, there is a normed space isomorphism between $L^2(b\Om,\mu_\Om)$ and $L^2((0,1)\times [0,2\pi)^2,ds\:d\theta_1d\theta_2)$. In view of this, for any $f\in L^2(b\Om,\mu_\Om)$, we abbreviate $f(r_1(s)e^{i\theta_1},r_2(s)e^{i\theta_2})$ to $f(s, \theta_1,\theta_2)$, and identify the two spaces without further comment.   

\begin{lemma}\label{pr:fourier series of Hardy space} Let $\Om\in\rt$. Then, $f\in\mathcal{H}^2\ltt \Om,\mu_\Om\rtt$ if and only if 
\be\label{eq:fourier of f}
f(s,\theta_1,\theta_2)=\sum_{(m_1,m_2)\in \N^2}a_{m_1,m_2} {r_1(s)}^{m_1}{r_2(s)}^{m_2} e^{i({\theta_1 {m_1}+\theta_2 {m_2}})} \quad \left(\text{in }L^2(b\Om,\mu_\Om)\right)
\ee
for some sequence of complex numbers $(a_{m_1,m_2})_{\N^2}$ satisfying
\be\label{eq:amn condition}
\sum_{(m_1,m_2)\in \N^2}|a_{m_1,m_2}|^2\left(\int_0^1r_1(s)^{2m_1} r_2(s)^{2m_2}ds\right)<\infty.
\ee
Moreover, if $f$ and $(a_{m_1,m_2})_{\N^2}$ are related as in \eqref{eq:fourier of f}, then
\be\label{eq:norm in hardy space}
\|f\|^2_{\mathcal H^2(\Om,\mu_\Om)}\approx\sum_{(m_1,m_2)\in \N^2}|a_{m_1,m_2}|^2\left(\int_0^1r_1(s)^{2m_1} r_2(s)^{2m_2}ds\right).
\ee
\end{lemma}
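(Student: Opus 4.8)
The plan is to reduce the characterization to the well-understood case of Hardy spaces on complete Reinhardt domains, exploiting the toric symmetry together with convexity. First I would observe that the monomials $e_{m_1,m_2}(s,\theta_1,\theta_2)=r_1(s)^{m_1}r_2(s)^{m_2}e^{i(\theta_1 m_1+\theta_2 m_2)}$, for $(m_1,m_2)\in\N^2$, are precisely the boundary restrictions of the holomorphic monomials $z_1^{m_1}z_2^{m_2}$; since $\Om$ is a bounded convex (hence complete) Reinhardt domain, these monomials lie in $\mathcal A(\Om)$, so $e_{m_1,m_2}\in\mathcal H^2(\Om,\mu_\Om)$. Using \eqref{eq:boundary MA wrt param}, i.e. $\vartheta^*\mu_\Om=\frac{1}{16\pi^2}\,ds\,d\theta_1\,d\theta_2$, a direct computation shows these functions are pairwise orthogonal in $L^2(b\Om,\mu_\Om)$ (the $\theta$-integrals kill cross terms), with $\|e_{m_1,m_2}\|^2_{\mu_\Om}=\frac{1}{4}\int_0^1 r_1(s)^{2m_1}r_2(s)^{2m_2}\,ds$. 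This already gives the ``if'' direction: any $f$ of the form \eqref{eq:fourier of f} with \eqref{eq:amn condition} is an $L^2$-convergent series of elements of the closed subspace $\mathcal H^2(\Om,\mu_\Om)$, hence lies in it, and Parseval gives \eqref{eq:norm in hardy space} with the implied constants being exactly $\tfrac14$ and $\tfrac14$ (so in fact an equality up to the harmless factor, which is why one only claims $\approx$).

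For the ``only if'' direction, I would argue that the closed span of $\{e_{m_1,m_2}:(m_1,m_2)\in\N^2\}$ is all of $\mathcal H^2(\Om,\mu_\Om)$. By definition $\mathcal H^2(\Om,\mu_\Om)$ is the $L^2(b\Om,\mu_\Om)$-closure of (restrictions of) holomorphic polynomials, which are finite linear combinations of the $e_{m_1,m_2}$; so the span of the $e_{m_1,m_2}$ is dense, and since $\{e_{m_1,m_2}\}$ is an orthogonal family, every $f\in\mathcal H^2(\Om,\mu_\Om)$ is the $L^2$-sum of its projections onto the $e_{m_1,m_2}$, i.e. it has an expansion \eqref{eq:fourier of f} with coefficients $a_{m_1,m_2}=\langle f,e_{m_1,m_2}\rangle/\|e_{m_1,m_2}\|^2_{\mu_\Om}$, and \eqref{eq:amn condition} holds because $\|f\|^2_{\mu_\Om}=\sum |a_{m_1,m_2}|^2\|e_{m_1,m_2}\|^2_{\mu_\Om}<\infty$. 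This also delivers \eqref{eq:norm in hardy space}.

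The one point that requires care — and which I expect to be the main (though modest) obstacle — is justifying that the $L^2$-expansion of $f$ really is a genuine Fourier series in the toric variables, i.e. that the coefficient of $e^{i(\theta_1 m_1+\theta_2 m_2)}$ in the ordinary Fourier expansion of $f$ (in $(\theta_1,\theta_2)$, for a.e.\ $s$) vanishes whenever $(m_1,m_2)\notin\N^2$, and that the surviving coefficients are of the separated form (constant)$\cdot r_1(s)^{m_1}r_2(s)^{m_2}$. The cleanest route is: (i) expand an arbitrary $f\in\mathcal H^2(\Om,\mu_\Om)$ in the orthonormal system $\{(4\pi^2)^{-1}e^{i(\theta_1 k_1+\theta_2 k_2)}\}_{(k_1,k_2)\in\Z^2}$ of $L^2([0,2\pi)^2)$ with $s$-dependent coefficients $c_{k_1,k_2}(s)$; (ii) note that for a polynomial $P$ only monomials with $(k_1,k_2)\in\N^2$ appear, and each such coefficient is a polynomial in $r_1(s),r_2(s)$ — and taking $L^2(b\Om,\mu_\Om)$-limits preserves the vanishing of the $c_{k_1,k_2}$ for $(k_1,k_2)\notin\N^2$; (iii) for $(m_1,m_2)\in\N^2$ the one-dimensional subspace spanned by $e_{m_1,m_2}$ is exactly the $L^2$-closure of polynomial multiples of $z_1^{m_1}z_2^{m_2}$ that are supported on the $(m_1,m_2)$ Fourier mode, which forces $c_{m_1,m_2}(s)$ to be a scalar multiple of $r_1(s)^{m_1}r_2(s)^{m_2}$. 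In practice this is just the statement that the orthogonal projections onto the Fourier modes commute with the closure operation, so it can be dispatched quickly; the rest is the routine orthogonality computation with \eqref{eq:boundary MA wrt param} together with Parseval.
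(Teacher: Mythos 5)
Your proof is correct, and it takes a somewhat different route from the paper's. The paper argues slicewise: for a.e.\ fixed $s$ it expands $f_s(\theta_1,\theta_2)$ in an ordinary Fourier series, approximates $f$ by polynomials $p_k$, and uses Parseval plus Fubini to show that each slice coefficient $\widehat{f_s}(m_1,m_2)$ is the limit of $a^{(k)}_{m_1,m_2}r_1(s)^{m_1}r_2(s)^{m_2}$, vanishing for $\min\{m_1,m_2\}<0$; this yields \eqref{eq:fourier of f}--\eqref{eq:norm in hardy space} and, as a by-product, the a.e.-in-$s$ identification of the slice Fourier coefficients. You instead observe that the boundary monomials $e_{m_1,m_2}$ form an orthogonal family in $L^2(b\Om,\mu_\Om)$ (by \eqref{eq:boundary MA wrt param}, with $\|e_{m_1,m_2}\|^2_{\mu_\Om}=\tfrac14\int_0^1 r_1^{2m_1}r_2^{2m_2}\,ds$, exactly as you compute) whose closed span is $\mathcal H^2(\Om,\mu_\Om)$, since polynomials are finite linear combinations of the $e_{m_1,m_2}$ and, conversely, each $e_{m_1,m_2}$ is a polynomial restriction; the abstract fact that every element of the closed span of an orthogonal family is the $L^2$-sum of its projections then gives both directions and the Parseval identity at once. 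This is cleaner and fully adequate, because the lemma only asserts the expansion in the $L^2(b\Om,\mu_\Om)$ sense. Consequently the ``main obstacle'' you flag in your last paragraph is not actually an obstacle for the statement as given: the separated form of the coefficients is built into the basis $\{e_{m_1,m_2}\}$, and the slicewise a.e.\ statement (vanishing of negative modes and the factorized form of $c_{m_1,m_2}(s)$ for a.e.\ $s$) is extra information that the paper's proof happens to establish en route but which the lemma does not require; your steps (i)--(iii), while correct (continuity of the mode projections does pass the vanishing and the one-dimensionality to the limit), can simply be omitted. In short: same two ingredients (the product form of $\vartheta^*\mu_\Om$ and polynomial density), but your packaging via an orthogonal basis of the closed span is more economical, while the paper's hands-on argument yields slightly finer pointwise-in-$s$ information.
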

\begin{proof} Let $\Om\in\rt$, and $f\in\mathcal H^2(\Om,\mu_\Om)$. For each $s\in(0,1)$, let $f_s:[0,2\pi)\times[0,2\pi)\rightarrow \C$ be given by
\bes
f_s\ltt\theta_1,\theta_2\rtt=f\ltt s,\theta_1,\theta_2\rtt.
\ees
Then, $f_s\in L^2\ltt d\theta_1d\theta_2\rtt$ for almost every $s\in (0,1).$
Thus, for almost every $s\in(0,1)$,
\be\label{eq:fourier fs}
f_s(\theta_1,\theta_2)=\sum_{(m_1,m_2)\in \mathbb{Z}^2} \widehat{f_s}(m_1,m_2) e^{i(\theta_1 m_1+\theta_2 m_2 )}\quad \text{in } L^2\ltt d\theta_1d\theta_2\rtt,
\ee
where $\widehat{f_s}(m_1,m_2)$, $(m_1,m_2)\in\Z^2$, are the Fourier coefficients of $f_s$. 

Since holomorphic polynomials are dense in $\mathcal H^2(\Om,\mu_\Om)$, there is a sequence of polynomials 
\bes
p_k\ltt z_1,z_2\rtt=\sum_{m_1=0}^{j_1(k)}\sum_{m_2=0}^{j_2(k)} a_{m_1,m_2}^{(k)} z_1^{m_1} z_2^{m_2},\quad k\in\N,
\ees
that converges to $f$ in $L^2\ltt b\Om,\mu_\Om\rtt$. For each $s\in(0,1)$, we have that
\beas
\widehat {\left(p_k\right)_s}( m_1,m_2)=
\begin{cases}
a_{m_1,m_2}^{(k)}r_1(s)^{m_1} r_2(s)^{m_2},\quad & 0\leq m_1\leq j_1(k), 0\leq m_2\leq j_2(k),\\
0,& \text{otherwise}.
\end{cases}
\eeas
By Parseval's formula and Fubini's theorem, we obtain that
\bes
\int_0^1\int_0^{2\pi}\int_0^{2\pi}|p_k-f|^2 d\theta_1 d\theta_2 \,ds\approx \sum_{(m_1,m_2)\in\Z^2} \int_0^1 \left|\widehat{\left(p_k\right)_s}(m_1,m_2)-\widehat{f_s}(m_1,m_2)\right|^2 ds.
\ees
Since the sequence on the left-hand side converges to $0$ as $k\rightarrow \infty$,
\bea\label{eq:conv fourier for geq 0}
\lim_{k\rightarrow\infty}\int_0^1 \left|\widehat{\ltt p_k\rtt_{s}}(m_1,m_2)-\widehat{f_s}(m_1,m_2)\right|^2 ds=0,\quad \forall (m_1,m_2)\in \Z^2.
\eea
In particular, for each $(m_1,m_2)\in\Z^2$, there exists some $a_{m_1,m_2}\in\C$ such that
    \begin{align*}
       \lim_{k\rightarrow\infty}\int_0^1 \widehat{\ltt p_k\rtt_{s}}(m_1,m_2)\: ds &=\left( \int_0^1 r_1(s)^{m_1}r_2(s)^{m_2}ds\right)\lim_{k\rightarrow\infty}a_{m_1,m_2}^{(k)}\\
    &=\left( \int_0^1 r_1(s)^{m_1}r_2(s)^{m_2}ds\right)a_{m_1,m_2}.
    \end{align*}
Since $\ltt a_{m_1,m_2}^{(k)}\rtt_{k\in\N}$ is the constant zero sequence whenever $\min\{m_1,m_2\}<0$, we have that $a_{m_1,m_2}=0$ whenever $\min\{m_1,m_2\}<0$. Now, by the fact that $L^2$-convergence implies a.e. pointwise convergence of a subsequence, we obtain that, for a.e. $s\in (0,1)$,
\beas
\widehat{f_s}(m_1,m_2)=\begin{cases}a_{m_1,m_2} r_1(s)^{m_1} r_2(s)^{m_2},\quad &\min\{m_1,m_2\}\geq0,\\
0,& \min\{m_1,m_2\}<0.
\end{cases}
\eeas
This yields that \eqref{eq:fourier of f}, \eqref{eq:amn condition}, and \eqref{eq:norm in hardy space} hold for any $f\in\mathcal H^2(\Om,\mu_\Om)$.

Conversely, if $(a_{m_1,m_2})_{\N^2}$ is a sequence of complex numbers satisfying \eqref{eq:amn condition}, then the function $f$ defined as in \eqref{eq:fourier of f} is in $L^2(b\Om,\mu_\Om)$, and is approximable therein by holomorphic polynomials of the form  $p_k(z_1,z_2)=\sum_{m_1,m_2=0}^ka_{m_1,m_2}z_1^{m_1} z_2^{m_2}$, $k\in\N$. Thus, $f\in\mathcal H^2(\Om,\mu_\Om)$.
\end{proof}
It is now possible to write the power series expansion of the Laplace transform of a Hardy-space function.

\begin{lemma}\label{le:power series of laplace} Let $\Om\in\rt$, and $f\in\mathcal H^2(\Om,\mu_\Om)$ admit the expansion \eqref{eq:fourier of f} in $L^2(b\Om,\mu_\Om)$. Then, the Laplace transform of $f$ is an entire function on $\C^2$ with power series expansion
\be\label{eq:power series of FL}
\mathcal{L}\ltt f \rtt(z_1,z_2)=\sum_{(m_1,m_2)\in \N^2} t_{m_{1},m_{2}} z_1^{m_1} z_2^{m_2},
\ee
where the coefficients $t_{{m_1},{m_2}}$ are given by
\be\label{eq:coeff of power series}
t_{m_1,m_2}=\frac{1}{4}\frac{\overline{a_{m_1,m_2}}}{m_1!m_2!} \ltt\int_0^1 r_1(s)^{2m_1}r_2(s)^{2m_2}ds\rtt.
\ee 
\end{lemma}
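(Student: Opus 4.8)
The plan is to identify the Taylor coefficients of $\mathcal L(f)$ at the origin and check that they match \eqref{eq:coeff of power series}. First I would record two elementary facts that make all the manipulations below legitimate: $\mu_\Om$ is a \emph{finite} measure (by \eqref{eq:boundary MA wrt param}, $\mu_\Om(b\Om)=\tfrac14$), and $b\Om$ is compact, so for $z$ in any fixed compact subset of $\C^2$ the function $\zeta\mapsto e^{\langle z,\zeta\rangle}$ together with all its $z$-derivatives $\zeta\mapsto \zeta_1^{m_1}\zeta_2^{m_2}e^{\langle z,\zeta\rangle}$ is uniformly bounded on $b\Om$. Since $f\in\mathcal H^2(\Om,\mu_\Om)\subset L^1(b\Om,\mu_\Om)$ (Cauchy--Schwarz, $\mu_\Om$ finite), dominated convergence plus Morera's theorem applied separately in $z_1$ and $z_2$ (interchanging the contour integral with $\int_{b\Om}$ via Fubini) shows that $\mathcal L(f)$ is separately holomorphic and locally bounded on $\C^2$, hence entire; in particular it is the sum of its Taylor series at $0$.

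Next I would differentiate under the integral sign, justified by the same domination, to get
\[
t_{m_1,m_2}=\frac{1}{m_1!\,m_2!}\,\frac{\partial^{m_1+m_2}\mathcal L(f)}{\partial z_1^{m_1}\partial z_2^{m_2}}(0)=\frac{1}{m_1!\,m_2!}\int_{b\Om}\overline{f(\zeta)}\,\zeta_1^{m_1}\zeta_2^{m_2}\,d\mu_\Om(\zeta).
\]
Because $\zeta\mapsto\zeta_1^{m_1}\zeta_2^{m_2}$ is bounded on the compact set $b\Om$, it lies in $L^2(b\Om,\mu_\Om)$, so $g\mapsto\int_{b\Om}\overline{g}\,\zeta_1^{m_1}\zeta_2^{m_2}\,d\mu_\Om$ is a continuous linear functional on $L^2(b\Om,\mu_\Om)$. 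Applying it to the partial sums $f_N=\sum_{m_1,m_2\le N}a_{m_1,m_2}r_1(s)^{m_1}r_2(s)^{m_2}e^{i(m_1\theta_1+m_2\theta_2)}$, which converge to $f$ in $L^2(b\Om,\mu_\Om)$ by Lemma~\ref{pr:fourier series of Hardy space}, lets me pass to the limit.

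For $N\ge\max\{m_1,m_2\}$ I would evaluate $\int_{b\Om}\overline{f_N}\,\zeta_1^{m_1}\zeta_2^{m_2}\,d\mu_\Om$ by pulling back along $\vartheta$ and using \eqref{eq:boundary MA wrt param}: the relations $\int_0^{2\pi}e^{in\theta}\,d\theta=2\pi\delta_{n,0}$ annihilate every term of $f_N$ except the $(m_1,m_2)$-th, leaving $\tfrac{(2\pi)^2}{16\pi^2}\,\overline{a_{m_1,m_2}}\int_0^1 r_1(s)^{2m_1}r_2(s)^{2m_2}\,ds=\tfrac14\,\overline{a_{m_1,m_2}}\int_0^1 r_1(s)^{2m_1}r_2(s)^{2m_2}\,ds$. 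This value is independent of $N$, hence equals the limit, and dividing by $m_1!m_2!$ gives exactly \eqref{eq:coeff of power series}; combined with the first paragraph this yields \eqref{eq:power series of FL}. I do not expect a genuine obstacle here: the only points demanding care are the justification of differentiation under the integral sign and the fact that \eqref{eq:fourier of f} converges merely in $L^2$, so the termwise computation must be routed through the continuity of the bounded functional above rather than performed naively on the infinite sum. (A variant avoids differentiation altogether: for fixed $z$ the Taylor expansion of $\zeta\mapsto e^{\langle z,\zeta\rangle}$ converges uniformly and is dominated on the compact set $b\Om$, so one may expand it inside $\mathcal L(f)(z)=\int_{b\Om}\overline f\,e^{\langle z,\cdot\rangle}d\mu_\Om$, interchange sum and integral by dominated convergence, and compute each resulting moment integral as above — this simultaneously produces the series and proves its convergence.)
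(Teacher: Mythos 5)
Your proof is correct and follows essentially the same route as the paper: both reduce the computation to the $L^2(\mu_\Om)$-approximating partial sums and evaluate the resulting moments via the parametrization \eqref{eq:boundary MA wrt param} and the orthogonality of $e^{i(m_1\theta_1+m_2\theta_2)}$. The only cosmetic difference is that you identify the Taylor coefficients by differentiating under the integral sign, whereas the paper transfers the formula from polynomials to $f$ using the bound $\Vert\mathcal L(f)\Vert_{\cont(K)}\lesssim_K\Vert f\Vert_{\mu_\Om}$ and locally uniform convergence.
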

\begin{proof} By the Cauchy--Schwarz inequality, for any compact set $K\subset\C^2$, there is a $C_K>0$ such that
    \bes
\Vert \mathcal{L}(f)\Vert^2_{\cont(K)}\leq
C_K \Vert f\Vert^2_{\mu_\Om},\quad f\in \mathcal H^2(\Om,\mu_\Om).
\ees
Thus, it suffices to establish the claim for polynomials of the form 
    $
    p_k\ltt z_1,z_2\rtt=\sum\limits_{m_1,m_2=0}^k a_{m_1,m_2}z_1^{m_1}\,z_2^{m_2}.
    $
Expanding $e^{\langle.,z\rangle}$ in terms of its power series and using the parameterization of $b\Om$ discussed in Section~\ref{subsec:param}, we have that
    \begin{align*}    
    \mathcal{L}\ltt p_k\rtt\ltt z_1,z_2\rtt&=\int_{b\Om} \overline{{p_k}(\zeta)} e^{\langle\zeta,z\rangle} d\mu_\Om(\zeta)\\
&=\frac{1}{16\pi^2}\int_0^1\int_0^{2\pi}\int_0^{2\pi}\ltt {\sum_{m_1,m_2=0}^k\overline{a_{m_1,m_2}} {r_1(s)}^{m_1}{r_2(s)}^{m_2} e^{-i({\theta_1 m_1+\theta_2 m_2})}}\rtt \\
    &\hspace{4cm}\times\left(\sum_{(k_1,k_2)\in \N^2}\frac{r_1(s)^{k_1}r_2(s)^{k_2}e^{i(k_1\theta_1+k_2\theta_2)}}{k_1!k_2!} z_1^{k_1}z_2^{k_2}\right) d\theta_1 d\theta_2 ds\\
&=\frac{1}{4} {\sum_{m_1,m_2=0}^k}\frac{\overline{a_{m_1,m_2}}}{m_1!m_2!}\left(\int_0^1r_1(s)^{2m_1} r_2(s)^{2m_2}\,ds\right) z_1^{m_1} z_2^{m_2},
\end{align*}
where the last equality is obtained by integrating over the $\theta_1$ and $\theta_2$ variables. 
\end{proof}

\begin{lemma}\label{le:series Bergman space} Let $\Om\in\rt$. Let $F$ be an entire function on $\C^2$ with power series expansion
\be\label{eq:series of F}
F(z_1,z_2)=\sum_{\ltt m_1,m_2\rtt\in \N^2}\beta_{m_1,m_2}z_1^{m_1}z_2^{m_2},
\ee
for some sequence of complex numbers $(\beta_{m_1,m_2})_{\N^2}$. Then,  $F\in\mathcal{A}^2(\C^2,\nu_\Om)$ if and only if 
\be\label{eq:bmn condition}
\sum_{\ltt m_1,m_2\rtt\in\N^2}\vert\beta_{m_1,m_2}\vert ^2(m_1+m_2+1)!^2 \left(\int_0^1 (r_1^{*}(s))^{2m_1} (r_2^{*}(s))^{2m_2} ds\rtt<\infty. 
\ee
Moreover, if $F\in\mathcal{A}^2(\C^2,\nu_\Om)$, then
\be\label{eq:norm in bergman space}
\|F\|^2_{\nu_\Om}\approx\sum_{\ltt m_1,m_2\rtt\in\N^2}\vert\beta_{m_1,m_2}\vert ^2(m_1+m_2+1)!^2 \left(\int_0^1 (r_1^{*}(s))^{2m_1} (r_2^{*}(s))^{2m_2} ds\rtt.
\ee
\end{lemma}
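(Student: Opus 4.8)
The plan is to push the integral defining $\|F\|_{\nu_\Om}^2$ through the ``polar coordinate'' map $T_\Om$ of Proposition~\ref{pr:cov} and then separate variables. Write $\psi(z)=|F(z)|^2e^{-2H_\Om(z)}\|z\|^{3/2}$, so that $\|F\|_{\nu_\Om}^2=\int_{\C^2}\psi\,(dd^cH_\Om)^2$. Since $\psi\geq 0$, formula \eqref{eq:cov} applies to $\psi$ (both sides interpreted in $[0,\infty]$, obtained by monotone convergence after truncating $\psi$ to $\min\{\psi,N\}\mathds 1_{\mathbb B^2(N)}$), giving
\[
\|F\|_{\nu_\Om}^2=\int_0^\infty\!\!\int_{b\Om}|F(T_\Om(r,\zeta))|^2\,e^{-2H_\Om(T_\Om(r,\zeta))}\,\|T_\Om(r,\zeta)\|^{3/2}\,r\,d\mu_\Om(\zeta)\,dr.
\]
Two features of $T_\Om$ simplify this. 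First, as noted in the proof of Proposition~\ref{pr:cov}, the Minkowski functional of $\Om^*$ is $H_\Om$ and $T_\Om(1,\zeta)\in b\Om^*$; since $H_\Om$ is $1$-homogeneous, this forces $H_\Om(T_\Om(r,\zeta))=r$. Second, $\|T_\Om(r,\zeta)\|=r\|T_\Om(1,\zeta)\|$, and as $\zeta$ ranges over $b\Om_+$ the point $T_\Om(1,\zeta)$ ranges over $b\Om^*_+$, a subset of the compact set $b\Om^*\subset\C^2\setminus\{0\}$ (note $0\in\Om^*$). Hence $\|T_\Om(r,\zeta)\|^{3/2}\approx r^{3/2}$ with constants depending only on $\Om$, and since $\mu_\Om$ does not charge $\mathcal Z$, this estimate may be used throughout the $\mu_\Om$-integral, yielding
\[
\|F\|_{\nu_\Om}^2\approx\int_0^\infty\!\!\int_{b\Om}|F(T_\Om(r,\zeta))|^2\,e^{-2r}\,r^{5/2}\,d\mu_\Om(\zeta)\,dr.
\]

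Next I would insert the explicit parametrization. By \eqref{eq:TOmega}, for $\zeta=(r_1(s)e^{i\theta_1},r_2(s)e^{i\theta_2})$ we have $T_\Om(r,\zeta)=(rr_1^*(s)e^{-i\theta_1},rr_2^*(s)e^{-i\theta_2})$, and by \eqref{eq:boundary MA wrt param}, $d\mu_\Om=\tfrac{1}{16\pi^2}\,ds\,d\theta_1\,d\theta_2$ on $b\Om_+$. For fixed $r$ and $s$ the series in \eqref{eq:series of F} converges absolutely and uniformly on the torus $\{|z_1|=rr_1^*(s),\,|z_2|=rr_2^*(s)\}$ (as $F$ is entire), so $|F(T_\Om(r,\zeta))|^2$ may be expanded and integrated term by term over $(\theta_1,\theta_2)\in[0,2\pi)^2$, the off-diagonal terms vanishing by orthogonality of the characters $e^{ik\theta}$. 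After this, Tonelli's theorem (now legitimate, the remaining integrand being nonnegative) in the $s$ and $r$ variables gives
\[
\|F\|_{\nu_\Om}^2\approx\sum_{(m_1,m_2)\in\N^2}|\beta_{m_1,m_2}|^2\left(\int_0^1(r_1^*(s))^{2m_1}(r_2^*(s))^{2m_2}\,ds\right)\int_0^\infty e^{-2r}r^{5/2+2(m_1+m_2)}\,dr .
\]

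It remains to evaluate the $r$-integral and match it against \eqref{eq:bmn condition}. One has $\int_0^\infty e^{-2r}r^{5/2+2k}\,dr=2^{-7/2-2k}\,\Gamma(\tfrac72+2k)$, so everything reduces to the asymptotic
\[
\frac{\Gamma(\tfrac72+2k)}{2^{2k}}\approx\bigl((k+1)!\bigr)^2\qquad(k\in\N),
\]
with constants independent of $k$. This follows from the Legendre duplication formula $\Gamma(\tfrac72+2k)=\tfrac{2^{5/2+2k}}{\sqrt{\pi}}\,\Gamma(\tfrac74+k)\,\Gamma(\tfrac94+k)$, together with the elementary consequence of Stirling's formula that $\Gamma(k+a)/\Gamma(k+1)\approx(k+1)^{a-1}$ uniformly in $k\in\N$ for each fixed $a>0$; applying this with $a=\tfrac74$ and $a=\tfrac94$ and multiplying gives $\Gamma(\tfrac74+k)\Gamma(\tfrac94+k)\approx(k!)^2(k+1)^2=((k+1)!)^2$. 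Taking $k=m_1+m_2$ and absorbing the universal constants into $\approx$ yields \eqref{eq:norm in bergman space}; the characterization \eqref{eq:bmn condition} is then immediate, since the chain of $\approx$'s holds for an arbitrary entire $F$ with both sides valued in $[0,\infty]$. I expect the only real obstacle to be the uniform Gamma-function estimate — the sole genuinely quantitative ingredient — while the term-by-term integration over the torus (handled by the entirety of $F$) and the legitimacy of Tonelli (handled by nonnegativity) are routine, and the rest is bookkeeping with Proposition~\ref{pr:cov} and the parametrization of Section~\ref{subsec:param}.
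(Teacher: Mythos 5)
Your proposal is correct and follows essentially the same route as the paper: apply the coarea-type formula of Proposition~\ref{pr:cov} to $|F|^2e^{-2H_\Om}\|z\|^{3/2}$ (using $H_\Om(T_\Om(r,\zeta))=r$ and $\|T_\Om(r,\zeta)\|\approx r$), pass to the parametrization \eqref{eq:TOmega}, \eqref{eq:boundary MA wrt param}, integrate term by term using orthogonality on the torus, and reduce the radial integral to the Stirling-type estimate $\int_0^\infty e^{-2r}r^{2k+5/2}\,dr\approx((k+1)!)^2$. Your extra justifications (truncation/monotone convergence for possibly non-integrable $\psi$, Tonelli, and the explicit Legendre-duplication proof of the Gamma asymptotic) only make explicit steps the paper leaves implicit.
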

\begin{proof}{
 First, since $B\ltt0,r_1\rtt\Subset \Om\Subset B\ltt0,r_2\rtt$, for some $r_1,r_2>0$, we have that $r_1\|z\|\leq H_\Om(z)\leq r_2\|z\|$, i.e., $H_\Om(z)\approx \|z\|$. Now, we apply \eqref{eq:cov}  to $\psi=\vert F\vert^2 e^{-2H_\Om}H_\Om^{3/2}$, noting that $H_\Om(T_\Om(r,\zeta))=r$. Then, parametrizing $b\Om$ as in Section~\ref{subsec:param}, expressing $T_\Om$ and $\mu_\Om$ in terms of this parametrization as in \eqref{eq:TOmega} and  \eqref{eq:boundary MA wrt param}, we obtain that
\begin{align*}
\Vert F\Vert_{\nu_\Om}^2&=\int_{\C^2}\left\vert F(z)\right\vert^2 e^{-2H_\Om(z)}\|z\|^\frac{3}{2}\left(dd^cH_\Om\right)^2(z)\\
&\approx
\int_{\C^2}\left\vert F(z)\right\vert^2 e^{-2H_\Om(z)}H_\Om(z)^\frac{3}{2}\left(dd^cH_\Om\right)^2(z)\\
&\approx 
\int_0^\infty\int_{b\Om}\left\vert F\ltt T_\Om\ltt r,\zeta\rtt\rtt\right\vert^2 e^{-2r} r^{\frac{5}{2}} d\mu_\Om(\zeta) dr
\\
&\approx  \int_0^\infty\int_0^1\,\int_0^{2\pi}\,\int_0^{2\pi}\left\vert F(rr_1^*(s)e^{-i\theta_1},rr_2^*(s)e^{-i\theta_2})\right\vert^2 e^{-2r} r^{\frac{5}{2}} d\theta_1 d\theta_2 ds dr\\
&= \int_0^\infty
\left(4\pi^2\sum_{(m_1,m_2)\in \N^2}r^{2(m_1+m_2)}|\beta_{m_1,m_2}|^2\int_0^1{r_1^*(s)}^{2m_1} {r_2^*(s)}^{2m_2}ds\right)e^{-2r} r^{\frac{5}{2}} dr\\
& \approx \sum_{(m_1,m_2)\in \N^2}|\beta_{m_1,m_2}|^2(m_1+m_2+1)!^2 \left(\int_0^1 r_1^{*}(s)^{2m_1} r_2^{*}(s)^{2m_2}\,ds\right),
\end{align*}
where, in the final step, we use the following consequence of Stirling's approximation:
\be\label{eq:estimate}
    \left(\int_0^\infty r^{2k+1} e^{-2r} r^{-\frac{1}{2}}\,dr\right)\approx (k!)^2.
    \ee}
    From this comparison, the lemma immediately follows.
\end{proof}
\subsection{The Leray transform.} We briefly recall the construction and properties of the Leray transform as given in \cite[\S3]{BaLa09}. 
Given $\Om\in\rt$,
    \bes
        \Le(f)(z)=\int_{b\Om}f(\zeta)L(\zeta,z), \quad z\in\Om,
    \ees
where, the (Leray) kernel is given by
\[L(\zeta,z)=
\frac{d\mu_\Om(\zeta)}{\langle\partial m_\Om(\zeta),\zeta-z\rangle^2}=\mathds{1}_{b\Om_+}(\zeta)\frac{j^*\ltt d^cm_\Om\wedge dd^cm_\Om\rtt(\zeta)}{\langle\partial m_\Om(\zeta),\zeta-z\rangle^2},
\]
by Proposition~\ref{pr:boundary MA measure}. Since the right-hand side is independent of the choice of $\cont^2$-smooth defining function of $b\Om_+$, we replace $m_\Om$ by $\rho$ from 
 \eqref{eq:B-L defining function}, and use the parametrization given in \eqref{eq:full parameterization}. Thus, for $\zeta=\ltt r_1(s)e^{i\theta_1},r_2(s)e^{i\theta_2}\rtt\in b\Om$ and $z=(z_1,z_2)\in\Om$,
\begin{align*}
L(\zeta,z)=\frac{ds\,d\theta_1\,d\theta_2}{4\pi^2}\sum\limits_{m_1,m_2=0}^\infty \frac{(m_1+m_2+1)!}{m_1!m_2!}r_1^*(s)^{m_1} r_2^*(s)^{m_2} z_1^{m_1}z_2^{m_2}e^{-i\ltt m_1\theta_1+m_2\theta_2\rtt},
\end{align*}
where the above series converges uniformly in $z$ on compact subsets of $\Om$.

Following \cite{BaLa09}, a function of the form $g(s)e^{i\ltt m_1\theta_1+m_2\theta_2\rtt}$ is called an $(m_1,m_2)$-monomial, and the space of all $(m_1,m_2)$-monomials that are in $L^2(b\Om,\mu_\Om)$ is denoted by $L^2_{m_1,m_2}(b\Om,\mu_\Om)$. The spaces $L^2_{m_1,m_2}\ltt b\Om,\mu_\Om\rtt$, $(m_1,m_2)\in\Z\times\Z$, are mutually orthogonal, and
\be\label{eq:decomp}L^2\ltt b\Om,\mu_\Om\rtt=\underset{(m_1,m_2)\in\Z^2}{\bigoplus} L^2_{m_1,m_2}\ltt b\Om,\mu_\Om\rtt.\ee
By direct computation, it is shown in \cite[Corollary 23]{BaLa09} that $\Le$ and $\Le_b$ are well-defined on each $L^2_{m_1,m_2}(b\Om,\mu_\Om)$, and therefore densely-defined on $L^2(b\Om,\mu_\Om)$. Furthermore, $\Le_b$ reproduces (the restrictions to $b\Om$ of) holomorphic polynomials, and annihilates any $(m_1,m_2)$-monomial with $\min\{m_1,m_2\}<0$. Finally, since the measure $\mu_\Om$ is admissible in the sense of \cite[Definition 28]{BaLa09}, we have the following result as a combination of Proposition 26, Theorem 30 and Proposition 32 in \cite{BaLa09}.   %

\begin{lemma}\label{le:Leray bdd criteria rt} Let $\Om\in\rt$. Given $(m_1,m_2)\in\Z\times\Z$, let $\Le_{m_1,m_2}$ denote the restriction of $\Le_b$ to $L^2_{m_1,m_2}\ltt b\Om,\mu_\Om\rtt$.
\begin{enumerate}
    \item If $\min\{m_1,m_2\}\geq0$, then  $\Le_{m_1,m_2}$ is a bounded rank-one projection operator on $ L^2_{m_1,m_2}\ltt b\Om,\mu_\Om\rtt$, with 
\begin{align}\label{eq:Lmn}
\Vert\Le_{m_1,m_2}\Vert^2_{\mu_{\Om}}=\gamma_{m_1,m_2}^2\int_0^1 r_1(s)^{2m_1}r_2(s)^{2m_2} ds\int_0^1 {r_1^{*}(s)}^{2m_1}{r_2^{*}(s)}^{2m_2}{ds},
  \end{align}
where $\gamma_{m_1,m_2}=\frac{\ltt m_1+m_2+1 \rtt !
    }{m_1!m_2!}$.
    \item The densely-defined operator $\Le_b$ extends as a bounded operator on $L^2\ltt b\Om, \mu_\Om\rtt$ if and only if the quantities $\Vert\Le_{m_1,m_2}\Vert^2_{\mu_{\Om}}$ are uniformly bounded for $(m_1,m_2)\in\N\times\N$. 
    \item  If $\Le_b$ extends as a bounded operator on $L^2\ltt b\Om,\mu_\Om\rtt$, it extends as a bounded projection operator from $L^2\ltt b\Om,\mu_\Om\rtt$ onto $\mathcal{H}^2\ltt \Om,\mu_\Om\rtt$.
\end{enumerate}
\end{lemma}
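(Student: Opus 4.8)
The plan is to reduce all three claims to the explicit series representation of the Leray kernel $L(\zeta,z)$ recalled above, combined with the orthogonal monomial decomposition \eqref{eq:decomp}; this is the route taken in \cite[Prop. 26, Thm. 30, Prop. 32]{BaLa09}, and I indicate the steps.

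For part $(1)$, I would fix $(m_1,m_2)$ with $\min\{m_1,m_2\}\geq0$ and a single $(m_1,m_2)$-monomial $f=g(s)e^{i(m_1\theta_1+m_2\theta_2)}\in L^2_{m_1,m_2}(b\Om,\mu_\Om)$, substitute the kernel series into $\Le(f)(z)=\int_{b\Om}f(\zeta)L(\zeta,z)$---the interchange of sum and integral being justified by uniform convergence of the kernel series on compact subsets of $\Om$---and integrate out $\theta_1$ and $\theta_2$, so that only the $(k_1,k_2)=(m_1,m_2)$ term survives:
\[
\Le(f)(z)=\gamma_{m_1,m_2}\left(\int_0^1 g(t)\,r_1^*(t)^{m_1}r_2^*(t)^{m_2}\,dt\right)z_1^{m_1}z_2^{m_2}.
\]
Passing to boundary values shows $\Le_{m_1,m_2}$ sends $f$ to a scalar multiple of the fixed function $h_{m_1,m_2}(s,\theta_1,\theta_2)=r_1(s)^{m_1}r_2(s)^{m_2}e^{i(m_1\theta_1+m_2\theta_2)}$, so $\Le_{m_1,m_2}$ has rank one with range $\C\cdot h_{m_1,m_2}$; for $\min\{m_1,m_2\}<0$ the analogous computation gives $\Le_{m_1,m_2}=0$, since the kernel sum runs only over nonnegative indices. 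To see $\Le_{m_1,m_2}$ is a projection I would apply it to $h_{m_1,m_2}$ itself, using $r_1(t)r_1^*(t)=t$ and $r_2(t)r_2^*(t)=1-t$ from \eqref{eq:r_1^* r_2^* in s} together with the Beta integral $\int_0^1 t^{m_1}(1-t)^{m_2}\,dt=m_1!\,m_2!/(m_1+m_2+1)!$, which gives $\Le_{m_1,m_2}h_{m_1,m_2}=h_{m_1,m_2}$. Finally, computing $\|h_{m_1,m_2}\|^2_{\mu_\Om}$ from \eqref{eq:boundary MA wrt param} and estimating the scalar functional $g\mapsto\int_0^1 g(t)\,r_1^*(t)^{m_1}r_2^*(t)^{m_2}\,dt$ by the Cauchy--Schwarz inequality (sharp when $g$ is proportional to $r_1^*(\cdot)^{m_1}r_2^*(\cdot)^{m_2}$) yields the norm formula \eqref{eq:Lmn}.

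For part $(2)$, the key structural observation is that $\Le_b$ is originally defined on the algebraic direct sum of the spaces $L^2_{m_1,m_2}(b\Om,\mu_\Om)$, which is dense by \eqref{eq:decomp}, and that there it is block-diagonal: it acts as $\Le_{m_1,m_2}$ on the $(m_1,m_2)$-block when $\min\{m_1,m_2\}\geq0$ and as $0$ otherwise. For such an operator, Pythagoras gives $\|\Le_b f\|^2_{\mu_\Om}=\sum_{(m_1,m_2)\in\N^2}\|\Le_{m_1,m_2}f_{m_1,m_2}\|^2_{\mu_\Om}\leq\bigl(\sup_{\N^2}\|\Le_{m_1,m_2}\|^2_{\mu_\Om}\bigr)\|f\|^2_{\mu_\Om}$ on the dense domain, so a finite supremum produces a bounded extension; conversely, restricting any bounded extension to a single block shows $\|\Le_{m_1,m_2}\|_{\mu_\Om}\leq\|\Le_b\|_{op}$. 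For part $(3)$, once $\Le_b$ is bounded, idempotency of every block (part $(1)$) and vanishing of the rest make $\Le_b$ idempotent on the dense domain, hence $\Le_b^2=\Le_b$ on all of $L^2(b\Om,\mu_\Om)$ by continuity; its range, being the closed span of $\{h_{m_1,m_2}:(m_1,m_2)\in\N^2\}$, equals the $L^2(\mu_\Om)$-closure of the holomorphic monomials, i.e.\ $\mathcal H^2(\Om,\mu_\Om)$, by Lemma~\ref{pr:fourier series of Hardy space}.

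The individual computations are routine; the point that needs care---and the reason it is cleanest to invoke \cite{BaLa09} here---is the justification that the algebraic direct sum of the monomial spaces really is a core for $\Le_b$ and that the boundary values of $\Le(f)$ patch together consistently across the blocks, which is precisely where the admissibility of $\mu_\Om$ in the sense of \cite[Def. 28]{BaLa09} enters.
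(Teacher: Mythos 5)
Your argument is correct, and it is essentially the route the paper takes: the paper does not prove this lemma itself but states it as a combination of Proposition~26, Theorem~30 and Proposition~32 of \cite{BaLa09} (using that $\mu_\Om$ is admissible in the sense of \cite[Definition 28]{BaLa09}), and your sketch is a faithful reconstruction of exactly that argument --- kernel series plus orthogonality of the monomial blocks giving the rank-one formula, the Beta integral $\int_0^1 t^{m_1}(1-t)^{m_2}\,dt=m_1!m_2!/(m_1+m_2+1)!$ together with $r_1r_1^*=s$, $r_2r_2^*=1-s$ giving idempotency, sharp Cauchy--Schwarz giving \eqref{eq:Lmn} (the normalization $1/16\pi^2$ in \eqref{eq:boundary MA wrt param} cancels between numerator and denominator, as your computation requires), and the block-diagonal Pythagoras argument for parts $(2)$ and $(3)$. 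The only point stated a bit loosely is the term-by-term integration: you invoke uniform convergence in $z$ on compact subsets of $\Om$, but the interchange needs uniformity in $\zeta\in b\Om$ as well; this does hold, since the kernel is the expansion of $4(1-\langle 2\partial m_\Om(\zeta),z\rangle)^{-2}$ and $|\langle 2\partial m_\Om(\zeta),z\rangle|$ is bounded away from $1$ uniformly for $\zeta\in b\Om$ and $z$ in a compact subset of $\Om$, because $2\partial m_\Om(\zeta)\in b\Om^*$. So this is a cosmetic rather than substantive gap, and your concluding remark correctly identifies where the appeal to \cite{BaLa09} (admissibility of $\mu_\Om$, existence of boundary values across blocks) genuinely enters.
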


\section{Proof Of Theorem ~\ref{th:Leray}}\label{sub:proof PW part 2}
For the first claim, we essentially mimic the proof of Theorem 1 in \cite{BaLa09}. The main technical tool used therein is Theorem 45, which gives the asymptotic values of the operator-norms of $\Le_{m_1,m_2}$ along special sequences in $\N\times\N$. Since we assume a weaker condition on the exponent function $p$
 (but our measure is more special), we require a modification of Theorem 45 in \cite{BaLa09}. This modification is stated as Lemma~\ref{le:key prop for Leray} below. For the second claim, we produce a sequence in $\N\times\N$ for which the corresponding operator-norms of $\Le_{m_1,m_2}$ are unbounded, and then invoke Lemma \ref{le:Leray bdd criteria rt}.

\subsection{Proof of Theorem~\ref{th:Leray} (i)} Recall, from \eqref{D:newclass}, that the exponent function ${p}$ associated to $\Om$ extends continuously from $b\Om$ to $(1,\infty)$. Since $s:[0,1]\rightarrow b\Om$ is a continuous function that assumes the values $0$ and $1$ at the axes, $\check{p}=p\circ s$ extends as a continuous function from $[0,1]$ to $(1,\infty)$. 
\begin{lemma}\label{le:key prop for Leray}
Let $\Om\in\mathcal{R}^{'}$. Let $({m_1}_{j},{m_2}_{j})_{j\in\N}\subset\N\times\N$ be a sequence such that \linebreak $\lim_{j\rightarrow\infty}\max\{{m_1}_{j},{m_2}_{j}\}=\infty$. Then, 
\bes    \lim_{j\rightarrow\infty}\|\Le_{{m_1}_{j},{m_2}_{j}}\|^2_{\mu_\Om}\ \text{exists} 
\ees
in each of the following cases: 
\begin{enumerate}[label=(\alph*)]
    \item 
    if $\lim_{j\rightarrow\infty}min\{{m_1}_{j},{m_2}_{j}\}=\infty$, and there is an $x\in[0,\infty]$ such that $\lim_{j\rightarrow\infty}\frac{{m_1}_{j}}{{m_2}_{j}}=x$;
 
  \item
  if ${m_1}_{j}$ is independent of $j$, and $\lim_{j\rightarrow\infty}{m_2}_j= \infty$; 
 \item
  if ${m_2}_{j}$ is independent of $j$, and $\lim_{j\rightarrow\infty}{m_1}_j= \infty$. 
\end{enumerate}
\end{lemma}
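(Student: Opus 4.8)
The key formula is \eqref{eq:Lmn}, which reduces the question to understanding the asymptotics of the product
\[
\gamma_{m_1,m_2}^2\int_0^1 r_1(s)^{2m_1}r_2(s)^{2m_2}\,ds\cdot\int_0^1 r_1^*(s)^{2m_1}r_2^*(s)^{2m_2}\,ds
\]
as $(m_1,m_2)$ runs along the prescribed sequences. First I would substitute the integral representations \eqref{eq:r_1 in s}, \eqref{eq:r_2 in s} (and their starred analogues via \eqref{eq:r_1^* r_2^* in s}, \eqref{eq:pp*bb*}) so that each of the two integrals takes the form $\int_0^1 e^{-2\Psi_{m_1,m_2}(s)}\,ds$ for an explicit function $\Psi$ built from $\check p$; then apply the Laplace (saddle-point) method. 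The phase $\Psi_{m_1,m_2}$ is strictly convex in $s$ for $\check p>1$, so for each $(m_1,m_2)$ there is a unique minimizer $s^\sharp=s^\sharp(m_1,m_2)$, and the leading asymptotics of the integral are governed by $e^{-2\Psi(s^\sharp)}/\sqrt{\Psi''(s^\sharp)}$. The factor $\gamma_{m_1,m_2}^2$ is handled by Stirling, contributing a power of $(m_1+m_2)$ times an exponential in $m_1\log(m_1+m_2)+m_2\log(m_1+m_2)-m_1\log m_1-m_2\log m_2$-type quantities; the point of the Barrett--Lanzani computation is that these exponential contributions cancel against those from the two integrals, leaving a finite limit determined only by the endpoint/interior behavior of $\check p$.

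The three cases are distinguished by where the saddle point $s^\sharp$ lands. In case (a), when $m_{1_j}/m_{2_j}\to x$ with $0<x<\infty$ and both indices go to infinity, $s^\sharp$ converges to an interior point $s_x\in(0,1)$; here one only uses the continuity of $\check p$ at $s_x$, so the argument is essentially that of \cite[Theorem 45]{BaLa09} but with the uniform continuity of $\check p$ (guaranteed on $[0,1]$ since $\Om\in\mathcal R'$) replacing the smoothness used there. The boundary subcases $x=0$ and $x=\infty$ of (a), and cases (b) and (c), are where $s^\sharp\to 1$ or $s^\sharp\to 0$; there the relevant limiting quantity involves $\check p(0)$ or $\check p(1)$, which exist precisely because $\Om\in\mathcal R'$. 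In cases (b) and (c) one index is frozen, which simplifies one of the two integrals to a beta-type integral with explicit asymptotics, and the limit can be computed directly. I would organize the proof by first doing the generic interior case (a) with $0<x<\infty$ carefully, then treating the degenerate cases $x\in\{0,\infty\}$ and (b), (c) together as ``$s^\sharp\to 0$ or $1$'' by a single endpoint analysis.

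The main obstacle is the loss of regularity: in \cite{BaLa09} the exponent function is smooth, which lets one differentiate the phase and get clean error terms in the Laplace method; here $\check p$ is merely continuous on $[0,1]$ (recall Example~\ref{ex:rprime}), so $\Psi_{m_1,m_2}$ is only $\cont^1$ and the second derivative needed for the standard saddle-point expansion may not exist. I would circumvent this by not seeking a full asymptotic expansion but only the \emph{existence of the limit}: one brackets the integrals between quantities built from $\inf$ and $\sup$ of $\check p$ over shrinking neighborhoods of $s^\sharp$, uses the uniform continuity of $\check p$ to make these brackets pinch, and then checks that the resulting upper and lower bounds for $\|\Le_{m_1,m_2}\|^2_{\mu_\Om}$ converge to the same value. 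Concretely, for a small $\delta>0$ one splits $\int_0^1 = \int_{|s-s^\sharp|<\delta} + \int_{|s-s^\sharp|\geq\delta}$; the tail is exponentially negligible by convexity of the phase, and on the central piece $\check p$ varies by at most the modulus of continuity, which controls the ratio of the integral to an explicit Gaussian-type integral with constant exponent. Keeping the $\gamma_{m_1,m_2}^2$ Stirling factor aligned with these brackets at each step is the bookkeeping heart of the argument, but it is the same bookkeeping as in \cite{BaLa09}, merely with continuity in place of differentiability.
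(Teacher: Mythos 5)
Your plan for case (a) matches the paper's: there the proof is the Barrett--Lanzani rescaling $s=s_{m,n}+tA_{m,n,k}$ around the critical point, with dominated convergence replacing smooth saddle-point expansions, and it only needs $\check{p}$ to extend continuously to $[0,1]$. (Two small corrections: the phase is only unimodal, not strictly convex, since $\Psi'(s)=\check{p}(s)^{-1}\bigl(\tfrac{m_2}{1-s}-\tfrac{m_1}{s}\bigr)$ and $\check{p}$ is merely continuous; and the subcases $x=0,\infty$ of (a) are still handled by this interior-type scaling because both indices tend to infinity, so lumping them with (b),(c) as a single ``endpoint analysis'' departs from the correct dichotomy, which is ``both indices $\to\infty$'' versus ``one index frozen.'')

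The genuine gap is in cases (b) and (c). There the paper does not run a bracketing argument at all: it writes $r_1(s)=s^{1/\check{p}_0}\tau(s)$ with $\tau$ slowly varying (see \eqref{eq:r_1 in s}), rescales $s=w/m_j$, and uses the uniform convergence theorem for regularly varying functions plus an explicit dominating function to show that the suitably normalized integral $E_j$ converges to a Gamma-type integral \eqref{eq:r1r2}; the normalization necessarily contains the factor $\tau(1/m_j)^{-2m_0}$, which for $\Om\in\mathcal R'\setminus\mathcal R$ does \emph{not} converge (this is exactly the failure of \eqref{eq:rcond}, cf.\ Example~\ref{ex:rprime}). The limit of $\Vert\Le_{m_0,m_{2j}}\Vert^2_{\mu_\Om}$ exists only because this divergent factor cancels against the dual integral via $r_1^*=s/r_1$, $\tau^*=\tau^{-1}$, $r_2^*(0)=r_2(0)^{-1}$ from \eqref{eq:r_1^* r_2^* in s}, which is how the paper deduces \eqref{eq:r1*r2*} from \eqref{eq:r1r2}. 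Your proposal misses this: the claim that brackets ``built from inf and sup of $\check{p}$ over shrinking neighborhoods of $s^\sharp$'' pinch by uniform continuity is false in the endpoint regime, because $r_1(s)$ depends on $\int_s^1\bigl(\tfrac1{\check{p}_0}-\tfrac1{\check{p}(t)}\bigr)\tfrac{dt}{t}$, which can diverge even though $\check{p}(t)\to\check{p}_0$; local control of $\check{p}$ near $0$ controls ratios $r_1(s)/r_1(s')$ inside the window but not the peak value itself, so each normalized integral has no limit on its own and ``computing the beta-type integral directly'' cannot close the argument. To repair the proposal you would need to (i) normalize each integral by the integrand's value at the critical point (or by the slowly varying factor, as the paper does), (ii) justify passing to the limit in the scaled variable $w=m_j s$ with a dominating function valid up to $w\sim m_j/2$ (the paper's bound $\tau(w/m_j)/\tau(1/m_j)\le w$), and (iii) exhibit the duality cancellation between the two integrals — none of which appears in your outline.
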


Assuming the above lemma, we complete the proof of Theorem~\ref{th:Leray} (i). By Lemma~\ref{le:Leray bdd criteria rt}, it suffices to show that $\|\Le_{m_1,m_2}\|_{\mu_\Om}$ is uniformly bounded for $(m_1,m_2)\in\N\times\N$. If the latter doesn't hold, there exists a sequence $({m_1}_j,{m_2}_j)_{j\in\N}\subset \N\times\N$ with $\lim_{j\rightarrow\infty}\max\{{m_1}_j,{m_2}_j\}=\infty$ such that $\|\Le_{{m_1}_j,{m_2}_j}\|_{\mu_\Om}$ is unbounded along any subsequence of $(m_{1j},m_{2j})_{j\in\N}$. Such a sequence always admits a subsequence, which we still denote by $({m_1}_j,{m_2}_j)_{j\in\N}$, that satisfies one of the conditions $(a)$, $(b)$ or $(c)$ in Lemma~\ref{le:key prop for Leray}. But, by Lemma~\ref{le:key prop for Leray},  $\|\Le_{{m_1}_j,{m_2}_j}\|_{\mu_\Om}$ converges to a positive constant, which is a contradiction. Thus, the quantities $\|\Le_{{m_1},{m_2}}\|_{\mu_\Om}$ are uniformly bounded, and $\Le_b$ extends as a bounded projection operator from $L^2\ltt b\Om,\mu_\Om\rtt$ onto $\mathcal{H}^2\ltt \Om,\mu_\Om\rtt$. But for the proof of Lemma~\ref{le:key prop for Leray}, we are done.
\smallskip

\noindent{\em Proof of Lemma~\ref{le:key prop for Leray}.} In the case of $(a)$, the proof is exactly the same as that of \cite[Theorem 45, part (a)]{BaLa09} with $h_k=\omega^k$ set as $1$; see\cite[Page 2807]{BaLa09}. The proof therein only depends on the continuous extendability of $\check{p}$ to $[0,1]$. See Section~\ref{SS:proof1.4ii} for a summary of this proof.  
 
We prove the result in the case of $(b)$. By switching the roles of $r_1$ and $r_2$ in the following argument, we also obtain the result in the case of $(c)$. Suppose $m_{1j}=m_0\in\N$ for all $j\in \N$. Let $\check{p}_0=\lim_{s\rightarrow 0}\check{p}(s)$. 
By \eqref{eq:r_1 in s},
    \be\label{eq:r1}
r_1(s)=s^{1/\ptt_0} \tau(s),\quad s\in(0,1),
    \ee
   where
$\tau(s)=b_1\exp\ltt{\int_s^1\ltt{\frac{1}{\ptt_0}-\frac{1}{\ptt(t)}}\rtt\frac{dt}{t}}\rtt.$
Then, for any $x>0$,
\be\label{eq:reg var r1}
\lim_{s\rightarrow 0}\frac{r_1(xs)}{r_1(s)}=x^{1/\ptt_0}.
\ee
This implies that $r_1$ is a regularly varying function at $0$, with $1/\ptt_0$ as the index of regular variation at $0$ (see \cite{HIZ98} for the definition). This allows us to use known estimates on regularly varying functions to tackle the integrals appearing in the following expression from \eqref{eq:Lmn}:
\begin{align}
\Vert\Le_{m_1,m_2}\Vert^2_{\mu_{\Om}}=
\left(\frac{\ltt m_1+m_2+1 \rtt !
    }{m_1!m_2!}\right)^2\int_0^1 r_1(s)^{2m_1}r_2(s)^{2m_2} ds\int_0^1 {r_1^{*}(s)}^{2m_1}{r_2^{*}(s)}^{2m_2}{ds}.
  \end{align}
Specifically, we will show that, as $j\rightarrow\infty$,
\bea
E_j&=&{\ltt{{m_2}_j}\rtt^{1+\frac{2m_0}{\ptt_0}}}{r_2(0)}^{-2{m_2}_j}{\tau\ltt\frac{1}{{m_2}_j}\rtt}^{-2m_0}\ltt\int_0^1 {r_1(s)}^{2m_0}{r_2(s)}^{2{m_2}_j}  ds\rtt\rightarrow C_{m_0},\label{eq:r1r2}
\\ 
E_j^*&=&{\ltt{{m_2}_j}\rtt^{1+\frac{2m_0}{\ptt_0^*}}}{r_2(0)}^{2{m_2}_j}{\tau\ltt\frac{1}{{m_2}_j}\rtt}^{2m_0}\ltt\int_0^{1}{r_1^*(s)}^{2m_0}{r_2^*(s)}^{2{m_2}_j}{ds}\rtt\rightarrow C^*_{m_0}, \label{eq:r1*r2*}
\eea
where $C_{m_0}, C^*_{m_0}>0$ are constants depending only on $m_0$, and $\ptt_0^*$ is the conjugate of $\ptt_0$.

Assume that \eqref{eq:r1r2} holds for any $\Om\in\mathcal R'$. Then it also holds for $\Om^*\in\mathcal R'$, i.e., 
    \[   \lim_{j\rightarrow\infty}{\ltt{{m_2}_j}\rtt^{1+\frac{2m_0}{\ptt_0^*}}}{{r_2^*(0)}^{-2{m_2}_j}{\tau^{*}\ltt\frac{1}{{m_2}_j}\rtt}^{-2m_0}}\ltt\int_0^1 {r_1^*(s)}^{2m_0}{r_2^*(s)}^{2{m_2}_j} ds\rtt = C'_{m_0}, 
    \]
for some positive constant $C'_{m_0}$. However, it follows from \eqref{eq:r_1^* r_2^* in s} that $r_2^*(0)=1/r_2(0)$ and $\tau^*\ltt\frac{1}{{m_2}_j}\rtt=\tau\ltt\frac{1}{{m_2}_j}\rtt^{-1}$. Thus, \eqref{eq:r1*r2*} holds if \eqref{eq:r1r2} holds. Further, by \eqref{eq:Lmn},
\beas
    \Vert \Le_{m_{0},m_{2j}}\Vert^2_{\mu_\Om}=\ltt{\frac{\ltt m_0+{m_2}_j+1 \rtt !
    }{m_0!{m_2}_j!}}\rtt ^{2} {m_2}_j^{-2(1+m_0)}E_jE_j^*.
\eeas
Thus, by \eqref{eq:r1r2}, \eqref{eq:r1*r2*}, and Stirling's approximation, we obtain that $\lim_{j\rightarrow\infty}\Vert \Le_{m_{0},m_{2j}}\Vert^2_{\mu_\Om}$ exists, and we are done in this case. 

It remains to prove \eqref{eq:r1r2}. For convenience, we denote ${m_2}_j$ by $m_j$. Assume $j$ is large enough so that $m_j>2$. Splitting the integral appearing in $E_j$ over $[0,1/2]$ and $(1/2,1]$, we write $E_j=E_j^1+E_j^2$. We first show that
\be\label{eq:est 0.5 to 1}
\lim_{j\rightarrow\infty}E_j^2=\lim_{j\rightarrow\infty}
{\ltt{m_j}\rtt^{1+\frac{2m_0}{\ptt_0}}}{{r_2(0)}^{-2m_j}{\tau\ltt\frac{1}{m_j}\rtt}^{-2m_0}}\ltt\int_\frac{1}{2}^{1}{r_1(s)}^{2m_0}{r_2(s)}^{2m_j}ds\rtt= 0.
\ee

Since $r_2(s)$ is a strictly decreasing function on $[0,1]$,
$
r_2(1)=0 <r_2(s)<r_2\ltt\frac{1}{2}\rtt
$, for any $s\in \ltt\frac{1}{2},1\rtt$. From the definition of $\tau$, 
\[
\tau\ltt\frac{1}{m_j}\rtt> b_1\exp\ltt-\int_\frac{1}{m_j}^{1}\frac{dt}{t}\rtt=\frac{b_1}{m_j}.
\]
Thus, there is a positive constant $D_{m_0}$ depending only on $m_0$ such that
\[
E_j^2\leq D_{m_0}{m_j^{1+2m_0+\frac{2m_0}{\ptt_0}}}{r_2(0)}^{-2m_j}{r_2\ltt\frac{1}{2}\rtt}^{2m_j},
\]
Since $r_2(\frac{1}{2})<r_2(0)$, the right-hand side tends to $0$ as $j\rightarrow\infty$.
Thus, \eqref{eq:est 0.5 to 1} holds.

The limit of $E_j^1$ is handled using the dominated convergence theorem, after applying an appropriate change of variables. Let $s=w/m_j$. Then, using \eqref{eq:r1}, we have that
\bes
E_j^1=\int_0^{\infty}  \mathbbm{1}_{[0,m_j/2]}
r_2\ltt\frac{w}{m_j}\rtt^{2m_j}{r_2(0)}^{-2m_j} w^\frac{2m_0}{\ptt_0} 
\tau\ltt\frac{w}{m_j}\rtt^{2m_0} \tau\ltt\frac{1}{m_j}\rtt^{-2m_0} dw,
\ees
where $\mathbbm{1}_A$ denotes the indicator function of $A\subset\R$. 
From $\eqref{eq:r_2 in s}$, it follows that $r_2$ is a $\cont^1$-smooth,  decreasing function on $(0,1)$. Since $\check{p}$ extends as a continuous function on $[0,1]$, $r_2^{\prime}$ extends continuously at $0$, with $\lim\limits_{s\rightarrow0}r_2^{\prime}(s)=-1/({r_2(0)\ptt_0})$. 
Thus, for each $w\in [0,\infty)$,
\begin{align}\label{eq:est2 r_2}
\lim_{j\rightarrow\infty}
{r_2\ltt\frac{w}{m_j}\rtt}^{2m_j}{r_2(0)}^{-2m_j}
&=\lim_{j\rightarrow\infty}\exp\left(m_j\ltt \log r_2\ltt\frac{w}{m_j}\rtt-\log r_2(0)\rtt\right)\notag \\
&=\exp\left(w\ltt\log {r_2}\rtt^{\prime}(0)\right)=\exp\ltt-{2w}{\ptt_0}^{-1}\rtt.
\end{align}
Also, from the expression of $\tau$ it follows that for each $x>0$, $\lim\limits_{s\rightarrow0}\frac{\tau(xs)}{\tau(s)}=1$. Hence,
\begin{align*}
\lim_{j\rightarrow\infty}r_2\ltt\frac{w}{m_j}\rtt^{2m_j}{r_2(0)}^{-2m_j} w^\frac{2m_0}{\ptt_0} 
\tau\ltt\frac{w}{m_j}\rtt^{2m_0} &\tau\ltt\frac{1}{m_j}\rtt^{-2m_0}=\exp\ltt-\frac{2}{\ptt_0}w\rtt w^{2m_0/\ptt_0}
\end{align*}
for each fixed $w\in[0,\infty)$. We now bound the integrand of $E_j^1$ by a function in $L^1(0,\infty).$

From the discussion about $r_2$ above, we have that ${w^{-1}}{\log\ltt{r_2(w)}{r_2(0)}^{-1}\rtt}$ is a negative continuous function on
$[0,\frac{1}{2}]$. Thus, there is a $\beta>0$ such that for each $m_j$,
\bea\label{eq:est1 r_2}
{r_2\ltt\frac{w}{m_j}\rtt}^{2m_j}{r_2(0)}^{-2m_j}\leq  \exp\ltt-\beta w\rtt\quad\forall w\in \left[0,\frac{m_j}{2}\right].
\eea
By \cite[Theorem 2]{HIZ98} on regularly varying functions, the convergence in \eqref{eq:reg var r1} is uniform on $x\in[0,1]$. In particular, the functions 
\be\label{eq:unif on [0,1]}
w^{1/{\ptt_0}} {\tau\ltt\frac{w}{m_j}\rtt}{\tau\ltt\frac{1}{m_j}\rtt}^{-1}\ \text{ are uniformly bounded on $[0,1]$.}
\ee
Next, assume $w\in(1,\frac{m_j}{2}]$.
From the definition of $\tau$, 
\be\label{eq:bd on rest}
{\tau\ltt\frac{w}{m_j}\rtt}{\tau\ltt\frac{1}{m_j}\rtt}^{-1}=\exp\int_\frac{1}{m_j}^\frac{w}{m_j}\ltt\frac{1}{\ptt_0}-\frac{1}{\ptt(t)}\rtt\frac{dt}{t}
<\exp\ltt\int_\frac{1}{m_j}^\frac{w}{m_j}\frac{1}{t}dt\rtt\,={w}.
\ee
Combining \eqref{eq:unif on [0,1]} and \eqref{eq:bd on rest} with (\ref{eq:est1 r_2}), we obtain a $D_{m_0}'>0$ such that 
\bes
\mathbbm{1}_{[0,m_j/2]}r_2\ltt\frac{w}{m_j}\rtt^{2m_j}{r_2(0)}^{-2m_j} w^{2m_0/\ptt_0} 
\tau\ltt\frac{w}{m_j}\rtt^{2m_0} \tau\ltt\frac{1}{m_j}\rtt^{-2m_0}\leq D'_{m_0}f(w),
\ees
where 
\bes
f(w)=\begin{cases}
 \exp\ltt-\beta w\rtt,&\ w\in[0,1],\\
 \exp\ltt-\beta w\rtt w^{2m_0\ltt\frac{1}{\ptt_0}+1\rtt},& w\in(1,\infty).
 \end{cases}
\ees
Finally, since $f\in L^1\ltt0,\infty\rtt$, by the dominated convergence theorem,
\begin{align}\label{eq:est 0 to 0.5}
\lim_{j\rightarrow\infty}E_j^1= \int_0^\infty \exp\ltt-\frac{2}{\ptt_0}w\rtt w^{2m_0/{\ptt_0}} dw.
\end{align}
Finally, combining \eqref{eq:est 0 to 0.5} and \eqref{eq:est 0.5 to 1}, we obtain \eqref{eq:r1r2}.

\qed

\subsection{Proof of Theorem~\ref{th:Leray} (ii)}\label{SS:proof1.4ii}
  The hypothesis on $\kappa_\Om$ implies that either the exponent $\check{p}$ or its conjugate $\check{p}^*$ is not bounded above. In other words, there exists a sequence $(x_l)_{l\in\N}\subset (0,\infty)$, such that 
  \be\label{eq:unbdd}
\lim_{l\rightarrow\infty}{\sqrt{\check{p}\ltt\frac{x_l}{1+x_l}\rtt\check{p}^*\ltt\frac{x_l}{1+x_l}\rtt}}=\infty.
\ee
For each $l\in\N$, let $({m}_{l,j},{n}_{l,j})_{j\in\N}\subset \N\times\N$ be a sequence such that $\lim_{j\rightarrow\infty}\min\{{m}_{l,j},{n}_{l,j}\}=\infty$ and $\lim_{j\rightarrow\infty}{m_{l,j}}/{n_{l,j}}=x_l$. We claim that for each $l\in\N$,
 \be\label{eq:counter conv}
\lim_{j\rightarrow\infty}\|\Le_{m_{l,j},n_{l,j}}\|^2_{\mu_\Om}= \frac{1}{2}\sqrt{\check{p}\ltt\frac{x_l}{1+x_l}\rtt\check{p}^*\ltt\frac{x_l}{1+x_l}\rtt}.
 \ee
Assuming \eqref{eq:counter conv}, we obtain from \eqref{eq:unbdd} that $\|\Le_{m,n}\|^2_{\mu_\Om}$ is not uniformly bounded for $(m,n)\in \N\times\N$. Thus, by Lemma~\ref{le:Leray bdd criteria rt} $(2)$, $\Le_b$ does not extend as a bounded operator on $L^2\ltt b\Om,\mu_\Om\rtt$.

The limit in  \eqref{eq:counter conv} can be established by making minor modifications to the proof of \cite[Theorem 45 (a)]{BaLa09}, which itself is a variation of Laplace's method for approximating integrals. Since the proof therein is rather long, we only provide a brief summary, indicating the modifications required in our case. 

Fix an $l\in\N$. For convenience, we denote $(m_{l,j},n_{l,j})$ by $(m_j,n_j)$. Now, note that 
\begin{align}\label{eq:neg main iden}
\frac{2\|\Le_{m,n}\|^2_{\mu_\Om}}{\sqrt{\check{p}\ltt{s_{m,n}}\rtt\check{p}^*\ltt {s_{m,n}}\rtt}}=\ltt\frac{\alpha_{m,n,0} A_{m,n,0}}{I_{m,n,0}}\rtt^2\cdot \frac{I_{m,n,1}}{\alpha_{m,n,1} A_{m,n,1}}\cdot \frac{I_{m,n,-1}}{\alpha_{m,n,-1}A_{m,n,-1}},
\end{align}
where $s_{m,n} =\frac{m}{m+n}$, $\alpha_{m,n,k}=g_{1,k}(s_{m,n})^{m}g_{2,k}(s_{m,n})^{n}$,
\[ 
 A_{m,n,k}=\sqrt{\frac{2m n}{\ltt\frac{2k}{\check p\ltt s_{m,n}\rtt}+1-k\rtt (m+n)^3}},\quad  I_{m,n,k}=\int_0^1 g_{1,k}(s)^{m}g_{2,k}(s)^{n} ds, \quad 
\]
$g_{1,k}(s)=r_1(s)^{2k}s^{1-k}$ and $g_{2,k}(s)=r_2(s)^{2k}(1-s)^{1-k}$, for $m,n\in\N$ and $k\in\{-1,0,1\}$. Thus, to prove \eqref{eq:counter conv}, it suffices to prove that 
\be\label{eq:toprove}
\lim_{j\rightarrow\infty}\frac{I_{m_j,n_j,k}}{\alpha_{m_j,n_j,k}A_{m_j,n_j,k}}=c
\ee
for some constant $c>0$. For a fixed $m,n,k$, one sets $s=s_{m,n}+tA_{m,n,k}$ to obtain  
\[
\frac{I_{m,n,k}}{A_{m,n,k}}=\alpha_{m,n,k}\int_{-\infty}^\infty \mathbbm 1_{\ltt-\frac{s_{m,n}}{A_{m,n,k}},\frac{1-s_{m,n}}{A_{m,n,k}}\rtt} g_{m,n,k}(t) dt,
\]
where $g_{m,n,k}(t)=\dfrac{g_{1,k}(s_{m,n}+tA_{m,n,k})^{m}g_{2,k}(s_{m,n}+tA_{m,n,k})^{n}}{\alpha_{m,n,k}}$. The rest of the proof relies on the fact that for all $k\in\{-1,0,1\}$,
\beas
\frac{2k}{\check{p}(s_{m_j,n_j})}+1-k&\leq& 2\\
C_{k,l     }=\inf_{j\in\N}\ltt\frac{2k}{\check{p}\ltt s_{{m_j},{n_j}}\rtt}+1-k\rtt&>&0.
\eeas
These are the necessary modifications of \cite[(5.4)]{BaLa09} and \cite[(5.5)]{BaLa09} needed in our case. 
As in the case of \cite[(5.13), (5.17), (5.19)]{BaLa09}, it follows that, for all $k\in\{-1,0,1\},$
\begin{enumerate}
    \item 
   $   \lim_{j\rightarrow\infty}{A_{m_j,n_j,k}}/{s_{m_j,n_j}}=\lim_{j\rightarrow\infty}{A_{m_j,n_j,k}}/{(1-s_{m_j,n_j})}=0
   $, 
\item  $\lim_{j\rightarrow\infty}g_{m_j,n_j,k}(t)=e^{-t^2}$ for all $t\in\mathbb R$, and
\item $g_{m_j,n_j,k}(t)\leq e^{C_{k,l}(1-2^{-|k|/2}|t|)}$ for all $t\in\R$, when $j$ is large enough.
\end{enumerate}
By the dominated convergence theorem, \eqref{eq:toprove} holds with $c=\sqrt \pi$. Combining this with \eqref{eq:neg main iden}, we obtain \eqref{eq:counter conv}.

\subsection{Remark}\label{se:rmk}
The case where $\kappa_3(\kappa_1\kappa_2)^{-1}$ is bounded away from zero on $b\Om_+$ but not uniformly continuous on $b\Om_+$ is quite subtle. This condition implies that the exponent $\check p:(0,1)\rightarrow (1,\infty)$ remains bounded away from $1$ and $\infty$, but fails to be uniformly continuous on $(0,1)$. Without loss of generality, we assume that the limit of $\check{p}(s)$ does not exist as $s\rightarrow 0$.  Under this assumption, the key difficulty in proving the $L^2(\mu_\Om)$-boundedness of $\Le_b$ lies in verifying conditions \eqref{eq:r1r2} and \eqref{eq:r1*r2*}. The crux of the issue is that $r_1$ and $r_1^*$ lose the property of being regularly varying at $0$ with a positive index. This follows from Karamata's representation theorem (see \cite[Theorem~1.2]{Se06}), which says that if $f:(0,1)\rightarrow\R$ is a regularly varying function at $0$, with index $\ell>0$, then
\[
f(s)= s^{\ell} \exp\ltt c(s)+\int_s^1 \frac{\epsilon(t)}{t}dt\rtt,\quad s\in(0,1),
\]
where $c,\epsilon$ are some bounded measurable functions with, $\lim_{s\rightarrow 0}c(s)=c,\,\lim_{s\rightarrow 0}\epsilon(s)=0$.
However, from \eqref{eq:r_1 in s} and \eqref{eq:r_1^* r_2^* in s}, it follows that such a representation would hold for $r_1$ or $r_1^*$ only if $\check{p}(s)$ has a finite limit as $s\rightarrow 0$, which contradicts our assumption. On the other hand, if we attempt to prove that $\Le_b$ does not extend as an $L^2(\mu_\Om)$-bounded operator by adapting the technique in Theorem~\ref{th:Leray}.ii, the attempt fails as \eqref{eq:counter conv} continues to hold. Since $\check{p}$ is bounded away from $1$, and $\infty$ on $(0,1)$, $\sqrt{\check{p}\ltt\frac{x_l}{1+x_l}\rtt\check{p}^*\ltt\frac{x_l}{1+x_l}\rtt}$ remains bounded above for all $x_l\in(0,\infty)$.

\section{Proof of Theorem~\ref{th:main thm 2}}\label{sub:proof of PW}

For the first claim, we use the series expansions obtained in Section \ref{subsec:series expansion} to write the $A^2\ltt\C^2,\nu_\Om\rtt$-norm of $\mathcal L(f)$ of any $f\in\mathcal{H}^2\ltt \Om,\mu_\Om\rtt$ in terms of the $\mathcal{H}^2\ltt \Om,\mu_\Om\rtt$-norm of $f$ and the operator norms of $\Le_{m_1,m_2}$, $m_1,m_2\in\N$. This allows us to use the characterization given by Lemma~\ref{le:Leray bdd criteria rt}. The bulk of the proof of the second claim is devoted to showing that the two measures $\nu_\Om$ and $\omega_\Om$ are comparable outside a compact set. For balls (in all dimensions), this is done in Section~\ref{sub:comp str cvx}. For other domains satisfying the hypothesis of the claim, this is done by comparing the Radon--Nikodym derivative of $\nu_\Om$ with respect to $\omega_\Om$ with that of the unit ball. 
\subsection{Proof of Theorem~\ref{th:main thm 2} $\mathbf{(i)}$}\label{subsec:proof PW part 1}
First, assume that $\Le_b$ extends as a bounded operator on $L^2\ltt b\Om,\mu_\Om \rtt$. Let $f\in \mathcal{H}^2\ltt \Om,\mu_\Om\rtt$. Then, by Lemma~\ref{pr:fourier series of Hardy space}, 
\bes
f(s,\theta_1,\theta_2)=\sum_{(m_1,m_2)\in \N^2}a_{m_1,m_2} {r_1(s)}^{m_1}{r_2(s)}^{m_2} e^{i({\theta_1 m_1+\theta_2 m_2})} \quad \left(\text{in }L^2(b\Om,\mu_\Om)\right).
\ees
with  $(a_{m_1,m_2})_{\N^2}$ satisfying \eqref{eq:amn condition}. By Lemma~\ref{le:series Bergman space}, $\mathcal L(f)\in A^2(\C^2,\nu_\Om)$ if 
\bes
\sum_{(m_1,m_2)\in \N^2}|t_{m_1,m_2}|^2(m_1+m_2+1)!^2 \left(\int_0^1 r_1^{*}(s)^{2m_1} r_2^{*}(s)^{2m_2}\,ds\right)<\infty,
\ees
where $t_{m_1,m_2}$ is related to $a_{m_1,m_2}$ as in \eqref{eq:coeff of power series}. From \eqref{eq:coeff of power series} and \eqref{eq:Lmn}, the uniform boundedness of $\Vert \Le_{m_1,m_2}\Vert_{\mu_\Om}^2$ from Lemma \ref{le:Leray bdd criteria rt}, and \eqref{eq:norm in hardy space}, we have that
\begin{align*}
& \sum_{(m_1,m_2)\in \N^2}|t_{m_1,m_2}|^2(m_1+m_2+1)!^2 \left(\int_0^1 r_1^{*}(s)^{2m_1} r_2^{*}(s)^{2m_2}\,ds\right)\\
\approx & \sum_{(m_1,m_2)\in \N^2} |a_{m_1,m_2}|^2 \|\Le_{m_1,m_2}\|_{\mu_\Om}^2 \left(\int_0^1 r_1(s)^{2m_1} r_2(s)^{2m_2} ds\right)\\
\approx &\sum_{(m_1,m_2)\in \N^2}|a_{m_1,m_2}|^2\left(\int_0^1 r_1(s)^{2m_1} r_2(s)^{2m_2} ds\right)\approx \|f\|^2_{\mu_\Om}<\infty. 
\end{align*}
Thus, $\mathcal{L}$ is an operator from $\mathcal H^2\ltt \Om,\mu
_\Om\rtt$ into $A^2\ltt\C^2,\nu_\Om\rtt$. Moreover, by Lemma~\ref{le:series Bergman space},
\bes
\Vert \mathcal L(f)\Vert^2_{\nu_\Om}
\approx
\sum_{(m_1,m_2)\in \N^2}|t_{m_1,m_2}|^2(m_1+m_2+1)!^2 \left(\int_0^1 r_1^{*}(s)^{2m_1} r_2^{*}(s)^{2m_2}\,ds\right)
\approx \|f\|^2_{\mu_\Om}<\infty.
\ees
Thus, $\mathcal L$ is injective. 

We now show that $\mathcal{L}$ is in fact onto $A^2\ltt\C^2,\nu_\Om\rtt$.
Let $F\in A^2\ltt\C^2,\nu_\Om\rtt$ be given by
\bes
F(z_1,z_2)=\sum_{\ltt m_1,m_2\rtt\in \N^2}\beta_{m_1,m_2}z_1^{m_1}z_2^{m_2}.
\ees  
By Lemma~\ref{le:series Bergman space},
\be\label{eq:main condition for onto}
\|F\|_{\mu_\Om}^2\approx \sum_{\ltt m_1,m_2\rtt\in\N^2}\vert\beta_{m_1,m_2}\vert ^2(m_1+m_2+1)!^2 \left(\int_0^1 (r_1^{*}(s))^{2m_1} (r_2^{*}(s))^{2m_2} ds\rtt.
\ee
Define $g$ on $b\Om$ as $
g(s,\theta_1,\theta_2)=\sum\limits_{(m_1,m_2)\in\N^2}\alpha_{m_1,m_2}r_1(s)^{m_1} r_2(s)^{m_2} e^{i(\theta_1m_1+\theta_2m_2)}$,
where
\be\label{eq:onto function}
\alpha_{m_1,m_2}={4}{\overline{\beta_{m_1,m_2}}}{\ltt m_1!m_2!\rtt}{\ltt\int_0^1{r_1(s)}^{2m_1} {r_2(s)}^{2m_2} ds\rtt^{-1}}.
\ee
Then, by \eqref{eq:Lmn} and the uniform boundedness of $\|\Le_{m_1,m_2}\|_{\mu_\Om}^2$, we have that
\begin{align*}
&\sum_{(m_1,m_2)\in\N^2}|\alpha_{m_1,m_2}|^2\left(\int_0^1r_1(s)^{2m_1}r_2(s)^{2m_2} ds\right)\\
\approx&\sum_{(m_1,m_2)\in\N^2}|\beta_{m_1,m_2}|^2(m_1+m_2+1)!^2\frac{1}{\|\Le_{m_1,m_2}\|_{\mu_\Om}^2} \left(\int_0^1 {r_1^*(s)}^{2m_1}{r_2^*(s)}^{2m_2} ds\right)\\
\approx&\sum_{(m_1,m_2)\in\N^2}|\beta_{m_1,m_2}|^2(m_1+m_2+1)!^2\left(\int_0^1 {r_1^*(s)}^{2m_1}{r_2^*(s)}^{2m_2} ds\right)<\infty.
\end{align*}
Thus, by Lemma~\ref{pr:fourier series of Hardy space}, $g\in \mathcal{H}^2\ltt \Om,\mu_\Om\rtt$, and
by Lemma~\ref{le:power series of laplace}, $\mathcal{L}(g)=F$. This concludes the proof of the claim that $\mathcal{L}$ is a normed space isomorphism between $\mathcal{H}^2\ltt \Om,\mu_\Om\rtt$ and $A^2\ltt\C^2,\nu_\Om\rtt$.

Now, we prove the converse claim by contraposition. Suppose $\Le_b$ does not extend as a bounded operator on $L^2\ltt b\Om,\mu_\Om\rtt$. By Lemma~\ref{le:Leray bdd criteria rt}, $\sup_{(m_1,m_2)\in \N^2}\|\Le_{m_1,m_2}\|_{\mu_\Om}^2=\infty$. Thus, there exists a sequence $\ltt m_{1_k},m_{2_k}\rtt_{k\in\Z_+}\subset\N\times\N$ such that $\|\Le_{m_{1_k},m_{2_k}}\|_{\mu_\Om}^2\geq k$, $\forall k\in \Z_+$. Consider the following function on $b\Om$ 
\bes
\widetilde{g}\ltt s,\theta_1,\theta_2\rtt=\sum_{(m_1,m_2)\in\N^2}\widetilde{\alpha}_{m_1,m_2}r_1(s)^{m_1} r_2(s)^{m_2} e^{i(\theta_1m_1+\theta_2m_2)},
\ees
where
\bes
\widetilde{\alpha}_{m_1,m_2}=\begin{cases}
\frac{1}{k}\left(\int_0^1{r_1(s)}^{2m_{1_{k}}}{r_2(s)}^{2m_{2_{k}}} ds\right)^{-\frac{1}{2}}, \quad & \text{when}\, m_1=m_{1_k},m_2=m_{2_k},\\
0, \quad &\text{otherwise}.
\end{cases}
\ees
Then, by Lemma~\ref{pr:fourier series of Hardy space}, $\widetilde{g}\in \mathcal H^2\ltt \Om,\mu_\Om\rtt$, but
\bes
\sum_{(m_1,m_2)\in \N^2}|\widetilde{\alpha}_{m_1,m_2}|^2\|\Le_{m_{1},m_{2}}\|_{\mu_\Om}^2\left(\int_0^1 (r_1(s))^{2m_1} (r_2(s))^{2m_2} ds\right)\geq \sum_{k=1}^\infty\frac{1}{k}=\infty.
\ees
Thus, by Lemma~\ref{le:series Bergman space}, $\mathcal{L}(\widetilde{g})\notin A^2\ltt\C^2,\nu_\Om\rtt$. This completes the proof of Theorem~\ref{th:main thm 2} $(i)$.
\subsection{Proof of Theorem \ref{th:main thm 2} $\mathbf{(ii)}$ assuming Lemma~\ref{le:comparision}} 
Let $\Om\in\rt$ be a domain that satisfies the hypothesis of Theorem \ref{th:main thm 2} (ii). Recall that $\Om^*$ is the dual complement of $\Om$. It suffices to prove that, for some $k>0$,
\begin{itemize}
    \item [(i)] for all $F\in\hol\ltt\C^2\rtt$,
    \be\label{eq:int est near 0}
    \int_{k\Om^*}\left\vert F(z)\right\vert^2 d\omega_\Om(z)\approx \int_{k\Om^*}\left\vert F(z)\right\vert^2 d\nu_\Om(z),
    \ee
\item [(ii)] for $z\in \C^2\setminus k\Om^*$,
\be\label{eq:est away from 0}
    \Vert e^{\langle z,\cdot\rangle}\Vert^{2}_{\mu_\Om}\approx e^{2H_\Om(z)} \|z\|^{-\frac{3}{2}}.
    \ee.
\end{itemize}

For the proof of (i), fix a $k>0$. Note that since both $e^{-2H_\Om(z)}$ and $\Vert e^{\langle z,\cdot\rangle}\Vert^{-2}_{\mu_\Om}$ are positive continuous functions on $\C^2$ and $\Om^*$ is a bounded set, it suffices to show that for all $F\in\hol(\C^2)$,
        \bes
        \int_{k\Om^*}\vert F(z)\vert^2 \Vert z\Vert^{\frac{3}{2}}\ltt dd^cH_\Om\rtt^2(z)\approx \int_{k\Om^*}\vert F(z)\vert^2\ltt dd^cH_\Om\rtt^2(z).
        \ees
         By the density of holomorphic polynomials in $\hol(\C^2)$, it further suffices to consider $F$ to be a holomorphic polynomial, i.e., of the form $F(z)=\sum_{m_1=0}^{k_1}\sum_{m_2=0}^{k_2}t_{m_1,m_2}z_1^{m_1}z_2^{m_2}.$ Recall, from the proof of Proposition~\ref{pr:cov} we have $T_\Om\ltt(0,k)\times b\Om\rtt=k\Om^*.$ Thus, applying \eqref{eq:cov} to $\psi(z)=\mathds 1_{k\Om^*}(z) |F(z)|^2 \|z\|^{3/2}$ and  $\psi(z)=\mathds 1_{k\Om^*}(z) |F(z)|^2$,  and mimicking the computation in Lemma \ref{le:series Bergman space}, we obtain that 
        \beas
         \int_{k\Om^*}\vert F(z)\vert^2 \| z\|^{\frac{3}{2}}\ltt dd^cH_\Om\rtt^2(z)
         &\approx& \sum_{m_1=0}^{k_1}\sum_{m_2=0}^{k_2}\frac{\left\vert t_{m_1,m_2}\right\vert^2}{\ltt2m_1+2m_2+\frac{7}{2}\rtt}\ltt\int_0^1 {r_1^*(s)}^{2m_1}{r_2^*(s)}^{2m_2} ds\rtt,
    \\
         \int_{k\Om^*}\vert F(z)\vert^2 \ltt dd^cH_\Om\rtt^2(z)
         &\approx& \sum_{m_1=0}^{k_1}\sum_{m_2=0}^{k_2}\frac{\left\vert t_{m_1,m_2}\right\vert^2}{2\ltt m_1+m_2+1\rtt}\ltt\int_0^1 {r_1^*(s)}^{2m_1}{r_2^*(s)}^{2m_2} ds\rtt.
        \eeas
    Finally, as $\ltt2m_1+2m_2+\frac{7}{2}\rtt\approx2\ltt m_1+m_2+1\rtt$, the proof of \eqref{eq:int est near 0} is complete.
    
       We now establish (ii) for a large enough $k>0$ (to be chosen later). We first reduce   \eqref{eq:est away from 0} to the integral estimate \eqref{eq:equiv form} below.  
Since the function ${e^{-2H_\Om(z)}}\|z\|^{\frac{3}{2}}\Vert e^{\langle z,\cdot\rangle}\Vert^{2}_{\mu_\Om}$ is continuous on $\C^2$, it suffices to prove \eqref{eq:est away from 0} for $z \in\C^2\setminus(\mathcal Z\cup k\Om^*)$.  For such a $z$, by Proposition \ref{pr:cov}, there exists $\ltt r,w \rtt\in [k,\infty)\times b\Om_+$ such that $z=T_\Om(r,w)=rT_\Om(1,w)$. Since $T_\Om(1,w)\in b\Om^*_+$, there exists $\ltt t,\phi_1,\phi_2\rtt\in(0,1)\times[0,2\pi)^2$ such that $z=r\ltt r_1^*(t)e^{i\phi_1},r_2^*(t)e^{i\phi_2}\rtt.$ Writing $ {e^{-2H_\Om(z)}}\|z\|^{\frac{3}{2}}\Vert e^{\langle z,\cdot\rangle}\Vert^{2}_{\mu_\Om}$ in terms of this parameterization, and using that $H_{\Om}\ltt T_\Om(r,w)\rtt=r$, we have that
    \bea      \frac{\|z\|^{\frac{3}{2}}}{{e^{2H_\Om(z)}}}\Vert e^{\langle z,\cdot\rangle}\Vert^{2}_{\mu_\Om}&=& \|z\|^{\frac{3}{2}}\int_{b\Om} e^{2\operatorname{Re}\langle\zeta,{z}\rangle-2H_\Om(z)} d\mu_{\Om}(\zeta)\notag\\
      &\approx& r^{\frac{3}{2}}\int_0^1\int_0^{2\pi}\int_0^{2\pi} e^{2r\ltt r_1(s)r_1^*(t)\cos(\theta_1-\phi_1)+r_2(s)r_2^*(t)\cos(\theta_2-\phi_2)-1\rtt} d\theta_1 d\theta_2 ds\notag \\
      &\approx&
      r^{\frac{3}{2}}\int_0^1\int_0^{\pi}\int_0^{\pi}  e^{2r\ltt r_1(s)r_1^*(t)\cos(\theta_1)+r_2(s)r_2^*(t)\cos(\theta_2)-1\rtt} d\theta_1 d\theta_2 ds.
    \label{eq:prelim estimate}
      \eea
Now, let $F_\Om:[0,1]^2\times[0,2\pi)^2\rightarrow \mathbb R$,
    \be\label{eq:FOm}
F_\Om(x,y,\psi_1,\psi_2)=  r_1(x)r_1^*(y)\cos(\psi_1)+r_2(x)r_2^*(y)\cos(\psi_2).
    \ee
    Since $\ltt r_1(x)e^{i\psi_1},r_2(x)e^{i\psi_2}\rtt\in b\Om$ and $\ltt r_1^*(y),r_2^*(y)\rtt\in b\Om^*$, we have that $F_\Om(x,y,\psi_1,\psi_2)\leq 1$ for all $\ltt x,y,\psi_1,\psi_2\rtt\in[0,1]^2\times[0,2\pi)^2 $.
    Moreover, since both ${\Om}$ and $\Om^*$ are strictly convex, for each $\zeta\in b\Om$, there exists a unique $\eta\in b\Om^*$ such that $\operatorname{Re}\langle \zeta,\eta\rangle=1$. Thus, $F_\Om(x,y,\psi_1,\psi_2)=1$ if and only if $x=y$ and $\psi_1=\psi_2=0$. Considering this, and the fact that $F_\Om$ is continuous on $[0,1]^2\times [0,2\pi)^2$, we obtain a $C>0$ such that  \begin{align}\label{eq:exp decay}
F_\Om(x,y,\psi_1,\psi_2)-1<-C,\quad \forall \ltt x,y,\psi_1,\psi_2\rtt\in[0,1]^2\times \left[\frac{\pi}{2},\pi\right]^2.
\end{align}
From \eqref{eq:prelim estimate} and \eqref{eq:exp decay}, it follows that proving \eqref{eq:est away from 0} is equivalent to proving 
\be\label{eq:equiv form}
   r^{\frac{3}{2}}\int_0^1\int_{0}^{\pi/2}\int_{0}^{\pi/2} e^{2r\ltt F_\Om(s,t,\theta_1,\theta_2)-1\rtt} d\theta_1 d\theta_2 ds\approx 1, \quad r>k. 
    \ee
    
To establish \eqref{eq:equiv form} for any $\Om$ satisfying the hypothesis of Theorem~\ref{th:main thm 2} $(ii)$, we use the following result which allows us to reduce the problem to the case of balls. 
\begin{lemma}\label{le:comparision} Let $\Om\in\rt$  satisfy the hypothesis of Theorem \ref{th:main thm 2} $(ii)$. Let $F_\Om$ be as in \eqref{eq:FOm}. There exist $C_1,C_2>0$ such that 
      \be\label{eq:comp w balls}
        C_1(1-F_{\mathbb B^2 })\leq 1-F_\Om\leq C_2(1-F_{\mathbb B^2 }),\quad \text{on }(0,1)^2\times \ltt 0,\frac{\pi}{2}\rtt^2.
       \ee
       \end{lemma}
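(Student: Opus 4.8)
The plan is to reduce the two-variable comparison \eqref{eq:comp w balls} to a one-variable statement about the parametrizing functions $r_1,r_2$ and their duals $r_1^*,r_2^*$, and then exploit the hypothesis that $\check p$ is bounded above and away from $1$. First I would record the elementary inequalities, valid for $a,b\in(0,1]$ and $\psi\in(0,\pi/2)$,
\[
a b\,(1-\cos\psi) + (1-ab) \;\le\; 1-ab\cos\psi \;\le\; 2\bigl(ab(1-\cos\psi)+(1-ab)\bigr),
\]
and more importantly $1-\cos\psi \approx \psi^2$ on $[0,\pi/2]$, so that
\[
1-F_\Om(x,y,\psi_1,\psi_2) \;\approx\; r_1(x)r_1^*(y)\psi_1^2 + r_2(x)r_2^*(y)\psi_2^2 + \bigl(1 - r_1(x)r_1^*(y)\bigr) + \bigl(1-r_2(x)r_2^*(y)\bigr),
\]
with constants that do not depend on $\Om$. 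The same estimate applies to $1-F_{\mathbb B^2}$; for the ball, $r_1,r_2,r_1^*,r_2^*$ are the explicit trigonometric profiles of $b\mathbb B^2$ and $b(\mathbb B^2)^*$ (which is again a ball), so $r_1^{\mathbb B}(x)r_1^{*,\mathbb B}(y)$ and $r_2^{\mathbb B}(x)r_2^{*,\mathbb B}(y)$ are smooth, bounded above by $1$, and vanish to first order exactly along $x=y$. Thus \eqref{eq:comp w balls} is reduced to showing that, on $(0,1)^2$,
\[
1 - r_1(x)r_1^*(y) \;\approx\; 1 - r_1^{\mathbb B}(x)r_1^{*,\mathbb B}(y)\approx |x-y|\cdot(\text{something bounded above and below}),
\]
and similarly for the product in the second coordinate, together with the matching comparison of the ``angular weights'' $r_1(x)r_1^*(y)$ with $r_1^{\mathbb B}(x)r_1^{*,\mathbb B}(y)$.

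For the angular weights, note from \eqref{eq:r_1^* r_2^* in s} that $r_1(s)r_1^*(s)=s$ and $r_2(s)r_2^*(s)=1-s$ for every $\Om\in\rt$, independent of the domain. Hence $r_1(x)r_1^*(y)$ and $r_2(x)r_2^*(y)$ already agree with their ball-counterparts along the diagonal. Off the diagonal, I would write
\[
r_1(x)r_1^*(y) = r_1(x)r_1^*(x)\cdot\frac{r_1^*(y)}{r_1^*(x)} = x\cdot\frac{r_1^*(y)}{r_1^*(x)},
\]
and use \eqref{eq:r_1 in s}, \eqref{eq:r_1^* r_2^* in s}, \eqref{eq:pp*bb*} to express $\log\bigl(r_1^*(y)/r_1^*(x)\bigr)$ as an integral of $1/(t\check p^*(t))$ — or more precisely of $\bigl(1-\tfrac{1}{\check p(t)}\bigr)/t$ — between $x$ and $y$; since $\check p$ is bounded above and away from $1$, the integrand is comparable to $1/t$, and the resulting logarithmic quantity is comparable (with $\Om$-independent constants, depending only on the bounds $p_\ell,p_g$) to the corresponding quantity for the ball, where $\check p^{\mathbb B}\equiv 2$. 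This pins down $r_1(x)r_1^*(y) \approx r_1^{\mathbb B}(x)r_1^{*,\mathbb B}(y)$ on all of $(0,1)^2$, and likewise for the second coordinate, after a symmetric computation using the behavior at $s=1$.

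The remaining, and I expect the most delicate, point is the comparison of the ``gap'' terms $1 - r_1(x)r_1^*(y)$ with $1 - r_1^{\mathbb B}(x)r_1^{*,\mathbb B}(y)$. Using the representation above, $1 - r_1(x)r_1^*(y) = 1 - x\exp\!\bigl(\int_x^y(1-\tfrac1{\check p(t)})\tfrac{dt}{t}\bigr)$; I would split into the regimes $y\le x$ and $y\ge x$, Taylor-expand the exponential, and bound the exponent both above and below by a constant multiple of $\bigl|\int_x^y\tfrac{dt}{t}\bigr| = |\log(y/x)|$, again using $1<p_\ell\le\check p\le p_g<\infty$. One then checks that $1 - x\,(y/x)^{c}$ is comparable to $1-x\,(y/x) + x\bigl|\log(y/x)\bigr|\cdot\mathbf 1_{\{y<x\}}$-type expressions uniformly in $c\in[1-\tfrac1{p_\ell},\,1-\tfrac1{p_g}]$ — the subtlety being the interplay between the $1-x\cos(\cdot)$-type vanishing along the diagonal and the genuinely one-sided blow-up of derivatives at the axes, which is why one cannot simply Lipschitz-compare $r_1$ to $r_1^{\mathbb B}$. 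Once both the angular weights and the gap terms are shown to be comparable to those of the ball (with constants depending only on $p_\ell,p_g$), substituting back into the $\psi^2$-expansion of $1-F_\Om$ and $1-F_{\mathbb B^2}$ and adding over the two coordinates yields \eqref{eq:comp w balls}. I would carry the constants through to extract the explicit $\alpha_\Om,\beta_\Om$ of Section~\ref{SS:normconstants}(c), distinguishing the cases $p_\ell\ge 2$ and $1<p_\ell\le 2$ according to whether the diagonal or the off-diagonal behavior dominates.
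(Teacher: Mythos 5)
Your reduction step contains a fatal algebraic error. Writing $a=r_1(x)r_1^*(y)$ and $b=r_2(x)r_2^*(y)$, the exact decomposition is $1-F_\Om=(1-a-b)+a(1-\cos\psi_1)+b(1-\cos\psi_2)$, so the correct ``gap'' term is $1-a-b$, not $(1-a)+(1-b)=1+(1-a-b)$. Since $a+b\le 1$ always (a point of $b\Om$ paired against a point of $b\Om^*$), your proposed surrogate $a\psi_1^2+b\psi_2^2+(1-a)+(1-b)$ is bounded below by $1$ on all of $(0,1)^2\times\ltt0,\pi/2\rtt^2$, whereas $1-F_\Om$ vanishes as $(x,y,\psi_1,\psi_2)$ approaches the diagonal $x=y$, $\psi_1=\psi_2=0$ (indeed, as you yourself note, $r_1(s)r_1^*(s)=s$ and $r_2(s)r_2^*(s)=1-s$, so $a+b=1$ there). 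Hence the first displayed ``$\approx$'' in your proposal cannot hold with any constants, and everything downstream rests on it.

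Moreover, even after correcting the gap term to $1-a-b$, the term-by-term comparison with the ball fails: the claim $r_1(x)r_1^*(y)\approx\sqrt{xy}$ on $(0,1)^2$ (for the ball, $r_1=r_1^*=\sqrt{s}$, $r_2=r_2^*=\sqrt{1-s}$) is false. For the egg $\Om_p$ with $p=4$ one has $r_1(x)r_1^*(y)=x^{1/4}y^{3/4}$, whose ratio to $\sqrt{xy}$ is $(y/x)^{1/4}$, unbounded above and below. The hypothesis $1<p_\ell\le\check p\le p_g<\infty$ does give two-sided control of the exponent $\int_y^x\bigl(t\check p(t)\bigr)^{-1}dt$ against $\log(x/y)$, but comparability of logarithms does not exponentiate to comparability of the products --- this is exactly the slip in ``This pins down $r_1(x)r_1^*(y)\approx r_1^{\mathbb B}(x)r_1^{*,\mathbb B}(y)$.'' So the vanishing along the diagonal and the degeneration at the axes must be handled jointly, which is what the paper does: it interpolates through the egg $\Om_{p_\ell}$, proving $1-F_{\mathbb B^2}\le C_{p_\ell}(1-F_{\Om_{p_\ell}})$ and $1-F_{\Om_{p_\ell}}\le (p_g/p_\ell)(1-F_\Om)$ (and the reverse bounds via duality $p\mapsto p^*$) by elementary optimization --- one-variable calculus in $s$ with the maximum located at $s=t$ for the egg-versus-ball step, and reduction to the corners of $[0,\pi/2]^2$ followed by one-variable calculus for the $\Om$-versus-egg step --- with constants depending only on $p_\ell,p_g$. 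Finally, the step you flag as most delicate (uniform comparison of $1-x(y/x)^c$ over $c$) is only asserted, not proved, so even granting your reduction the argument is incomplete.
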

       
Deferring the proof of Lemma~\ref{le:comparision} for now, we complete the proof of (ii). By Lemma~\ref{le:str cvx weight comp}, \eqref{eq:est away from 0}, and therefore \eqref{eq:equiv form},  holds for $\Om=\mathbb B^2 $ and $k=1$. In particular, for any $C>0$ 
\be\label{eq:ball est param}
   r^{\frac{3}{2}}\int_0^1\int_{0}^{\pi/2}\int_{0}^{\pi/2} e^{2rC\ltt F_{\mathbb B^2 }(s,t,\theta_1,\theta_2)-1\rtt} d\theta_1 d\theta_2 ds\approx 1,\quad r>\frac{1}{C}. 
    \ee
The estimate \eqref{eq:equiv form}, and therefore \eqref{eq:est away from 0}, now follows from \eqref{eq:comp w balls} and \eqref{eq:ball est param} for $k>1/C_1$. 
      

\subsection{Proof of Lemma \ref{le:comparision}}\label{ss:lemma comparison} Using elementary calculus, we optimize the functions $1-F_{\mathbb B^2 }-C_2(1-F_\Om)$ and $1-F_\Om-C_1(1-F_{\mathbb B^2 })$ 
on $(0,1)^2\times \ltt 0,{\pi}/{2}\rtt^2$, for appropriate choices of $C_1$ and $C_2$. It is simpler to do this for egg domains first, and then, for a general $\Om$.

\noindent {\em Upper bound for egg domains.} Let $p>1$ and $\Om_p=\{(z_1,z_2)\in\C^2:|z_1|^{p}+|z_2|^p<1\}$. We claim that, for all $(s,t,\theta_1,\theta_2)\in(0,1)^2\times \ltt 0,\pi/2\rtt^2$,
       \begin{align}{\label{eq:comp egg}}
      1-F_{\mathbb B^2 }(s,t,\theta_1,\theta_2)\leq C_{p}\ltt 1-F_{\Om_p}\ltt s,t,\theta_1,\theta_2\rtt\rtt,
      \end{align}
        where $C_p=
         p/2$, when $p\geq 2$ and
          $C_p=p/(2p-2)$, when
        $1<p<2.$

        Assume $p\geq 2$. Fix $(t,\theta_1,\theta_2)\in (0,1)\times \ltt0,{\pi}/{2}\rtt^2$ and consider $G_1:(0,1)\rightarrow \mathbb R$ given by
        \beas 
        G_1(s)&=& 1-F_{\mathbb B^2 }(s,t,\theta_1,\theta_2)-\frac{p}{2}\ltt 1-F_{\Om_p}\ltt s,t,\theta_1,\theta_2\rtt\rtt\\
        &=& 1-\frac{p}{2}-\ltt s^{\frac{1}{2}}t^{\frac{1}{2}}-\frac{p}{2}s^{\frac{1}{p}}t^{\frac{p-1}{p}}\rtt\cos\theta_1-\ltt(1-s)^{\frac{1}{2}}(1-t)^{\frac{1}{2}}-\frac{p}{2}(1-s)^{\frac{1}{p}}(1-t)^{\frac{p-1}{p}}\rtt\cos\theta_2,
        \eeas
        for $s\in(0,1)$. Then, for all $s\in(0,1)$,
        \begin{align*}
        G_1'(s)
        =\frac{1}{2}\ltt\ltt\frac{t}{s}\rtt^{\frac{p-1}{p}}-\ltt\frac{t}{s}\rtt^{\frac{1}{2}}\rtt \cos\theta_1+\frac{1}{2}\ltt\ltt\frac{1-t}{1-s}\rtt^{\frac{1}{2}}-\ltt\frac{1-t}{1-s}\rtt^{\frac{p-1}{p}}\rtt \cos\theta_2.
        \end{align*}
     Now using the fact that $ a^{\frac{p-1}{p}}> a^{\frac{1}{2}}$ whenever $a>1$, 
     we have that $G'(s)>0$ if $s\in(0,t)$ and $G'(s)<0$ if $s\in(t,1)$.
 Thus, $G_1$ has a global maximum on $(0,1)$ at $s=t$.
    However,
    \[
    G_1(t)=\ltt1-\frac{p}{2}\rtt\ltt1-t\cos \theta_1 -(1-t)\cos\theta_2\rtt\leq 0,
    \]
which proves the claim in the case when $p\geq 2$.

 The proof of \eqref{eq:comp egg} when $1<p\leq 2$ follows once we observe that $p^*\geq 2$, $F_{\Om_p}(s,t,\theta_1,\theta_2)=F_{\Om_p^*}(t,s,\theta_1,\theta_2)$ and $p^*/{2}=p/(2p-2)$.
   
\noindent {\em Lower bound for egg domains.} We claim that for all $(s,t,\theta_1,\theta_2)\in(0,1)^2\times(0,{\pi}/{2})^2$,
    \begin{align*}
         1-F_{\Om_p}\ltt s,t,\theta_1,\theta_2)\rtt\leq \frac{2}{(2C_{p})^*}\ltt1-F_{\mathbb B^2}(s,t,\theta_1,\theta_2)\rtt.
    \end{align*}
The same method of proof as in the previous case is applied to the function 
    \[
G_2(t)= 1-F_{\Om_p}(s,t,\theta_1,\theta_2)-\frac{2}{(2C_{p})^*}\ltt 1-F_{\mathbb B^2}\ltt s,t,\theta_1,\theta_2\rtt\rtt,\quad t\in(0,1),
\]
for a fixed  $(s,\theta_1,\theta_2)\in(0,1)\times (0,{\pi}/{2})^2$.

\noindent {\em Upper bound in the general case.} Let $\Om\in\rt$ satisfy the hypothesis of Theorem~\ref{th:main thm 2} $(ii)$. Then,  $p_{\ell}=\inf\{\check{p}(u):{u\in(0,1)}\}>1$ and $p_g=\sup\{\check{p}(u):{u\in(0,1)}\}<\infty$. We claim that, for all $\ltt s,t,\theta_1,\theta_2\rtt\in\ltt 0,1\rtt^2\times\ltt0,{\pi}/{2}\rtt^2$,
\begin{align}\label{eq:comp with egg 1}
        1-F_{\Om_{p_{\ell}}}\ltt s,t,\theta_1,\theta_2\rtt\leq \frac{p_g}{p_{\ell}}\ltt1-F_\Om\ltt s,t,\theta_1,\theta_2\rtt\rtt.
       \end{align}
      
From \eqref{eq:r_1 in s} and \eqref{eq:r_2 in s}, it follows that that 
\bea{\label{eq:r_1 with h_1}}
r_1(s)&=&s^{\frac{1}{p_{\ell}}}h_1(s),\,\text{where}\, h_1(s)=b_1\exp\ltt\int_s^1\ltt\frac{1}{p_{\ell}}-\frac{1}{\check{p}(u)}\rtt \frac{du}{u}\rtt,\\\label{eq:r_2 with h_2}
r_2(s)&=&(1-s)^{\frac{1}{p_{\ell}}}h_2(s),\,\text{where}\, h_2(s)=b_2\exp\ltt\int_0^s\ltt\frac{1}{p_{\ell}}-\frac{1}{\check{p}(u)}\rtt \frac{du}{1-u}\rtt.
\eea
Note that $h_1$ is a decreasing function and $h_2$ is an increasing function in $s$. 

 Now, fix $s,t\in(0,1)$ and consider the function $G:\left[0,{\pi }/{2}\right]^2\rightarrow \R$ given by
    \bea    G(\theta_1,\theta_2)&=& 1-F_{\Om_{p_{\ell}}}\ltt s,t,\theta_1,\theta_2\rtt-\frac{p_g}{p_{\ell}}\ltt1-F_\Om\ltt s,t,\theta_1,\theta_2\rtt\rtt\notag \\
    &=& 1-\frac{p_g}{p_{\ell}}-s^{\frac{1}{p_{\ell}}}t^{\frac{p_{\ell}-1}{p_{\ell}}}
    \ltt 1-\frac{p_g}{p_{\ell}}\frac{h_1(s)}{h_1(t)}\rtt\cos\theta_1
    \\ &&\qquad \qquad -(1-s)^{\frac{1}{p_{\ell}}}(1-t)^{\frac{p_{\ell}-1}{p_{\ell}}}\ltt1-\frac{p_g}{p_{\ell}}\frac{h_2(s)}{h_2(t)}\rtt\cos\theta_2,\label{eq:G}
    \eea
    for $(\theta_1,\theta_2)\in(0,\pi/2)^2.$
    Since $G$ is continuous on the rectangle $\left[0,{\pi}/{2}\right]^2$, $G$ attains its maximum on $\left[0,{\pi}/{2}\right]^2$. In fact, we claim that the maximum is attained at one of the corners. Suppose the maximum is attained at a point $\ltt\theta_1^{'},\theta_2^{'}\rtt\in \ltt0,{\pi}/{2}\rtt^2$. Then $0= 
        \nabla G\ltt\theta_1^{'},\theta_2^{'}\rtt$, which, is only possible if $0= 1-\frac{p_g}{p_{\ell}}\frac{h_1(s)}{h_1(t)}=1-\frac{p_g}{p_{\ell}}\frac{h_2(s)}{h_2(t)}$, since $\sin(\theta_j^{'})\neq 0$ for $j=\{1,2\}$. In this case, $\nabla G\equiv 0$, and $G$ is a constant function. 
    
Alternatively, suppose the maximum is attained at $\ltt\theta_1^{'},0\rtt$ for some $\theta_1^{'}\in \ltt0,\pi/2\rtt$. Then, 
\[
0=
\partl{G}{\theta_1}\ltt\theta_1',0\rtt
=-s^{\frac{1}{p_{\ell}}}t^{\frac{p_{\ell}-1}{p_{\ell}}}\ltt1-\frac{p_g}{p_{\ell}}\frac{h_1(s)}{h_1(t)}\rtt\sin\theta_1'.
\]
This implies that $\smpartl{G}{\theta_1}\ltt\cdot,0\rtt\equiv 0$, and $G(\cdot,0)$ is a constant function. The interior of the other three edges can be tackled in a similar fashion. Thus, to show that $G\leq 0$ on $[0,\pi/2]^2$, we show that the value of $G$ is nonpositive at the four corners. 
\begin{itemize}
    \item [(1)]  Since $p_{\ell}\leq p_g$, $G\ltt\frac{\pi}{2},\frac{\pi}{2}\rtt=1-p_g/{p_{\ell}}\leq 0$.

    \item [(2)] Note that
    \begin{align*}    G\ltt\frac{\pi}{2},0\rtt=1-\frac{p_g}{p_{\ell}}-(1-s)^\frac{1}{p_{\ell}}(1-t)^\frac{p_{\ell}-1}{p_{\ell}}\ltt1-\frac{p_g}{p_{\ell}}\frac{h_2(s)}{h_2(t)}\rtt.
    \end{align*}
For a fixed $t\in(0,1)$, we consider the right-hand side to be a function of $s$, i.e., let
\[
L(s)=1-\frac{p_g}{p_{\ell}}-(1-s)^\frac{1}{p_{\ell}}(1-t)^\frac{p_{\ell}-1}{p_{\ell}}\ltt1-\frac{p_g}{p_{\ell}}\frac{h_2(s)}{h_2(t)}\rtt. 
\]
As $r_1$ and $r_2$ extend continuously to $[0,1]$, so does $L$. We show that $L$ is nonpositive at $s=0,1$ and at any $s\in(0,1)$ such that $L'(s)=0$. Thus, $L(s)\leq 0$ on $[0,1]$.
\begin{itemize}
    \item [(a)] Since $p_{\ell}\leq p_g$, $h_2$ is an increasing function and $h_2(0)=b_2$,
\begin{align*}
    L(0)
    =\ltt1-\frac{p_g}{p_{\ell}}\rtt+(1-t)^\frac{p_{\ell}-1}{p_{\ell}}\ltt\frac{p_g}{p_{\ell}}\frac{b_2}{h_2(t)}-1\rtt\leq 0. 
\end{align*}

\item [(b)] Since $p_{\ell}\leq p_g$, $L(1)=1-{p_g}/{p_{\ell}}\leq 0$. 

\item [(c)] Suppose $L'(s_t)=0$ at some
$s_t\in(0,1)$. Then, since $h_2'(s)=\frac{h_2(s)}{1-s}\ltt\frac{1}{p_{\ell}}-\frac{1}{\check p(s)}\rtt$, 
     \begin{align*}
    0=L{'}(s_t)
    =\ltt\frac{1-t}{1-s_t}\rtt^{\frac{p_{\ell}-1}{p_{\ell}}}\ltt\frac{1}{p_{\ell}}-\frac{p_g}{p_{\ell}}\frac{h_2(s_t)}{\check{p}(s_t)h_2(t)}\rtt.
     \end{align*}
     Thus, $\check{p}(s_t)=p_g{h_2(s_t)}/{h_2(t)}$. Plugging this into $L(s_t)$, we have that
    \bes
    L(s_t)
    =\ltt1-\frac{p_g}{p_{\ell}}\rtt+(1-s_t)^{\frac{1}{p_{\ell}}}(1-t)^{\frac{p_{\ell}-1}{p_{\ell}}}\ltt\frac{\check{p}(s_t)}{p_{\ell}}-1\rtt
    \leq 0.
    \ees
    From (a), (b) and (c), we obtain that $G\ltt\frac{\pi}{2},0\rtt=L(s)\leq 0$.
    \end{itemize}

    \item [(3)] The same method of proof as in (2) works for $G(0,\pi/2)$ to yield that  $G\ltt0,{\pi}/{2}\rtt\leq 0$.

    \item [(4)] Note that $G(0,0)$ is 
        \beas
        1-\frac{p_g}{p_{\ell}}
        -s^{\frac{1}{p_{\ell}}}t^{\frac{p_{\ell}-1}{p_{\ell}}}\ltt1-\frac{p_g}{p_{\ell}}\frac{h_1(s)}{h_1(t)}\rtt
        -(1-s)^{\frac{1}{p_{\ell}}}(1-t)^{\frac{p_{\ell}-1}{p_{\ell}}}\ltt1-\frac{p_g}{p_{\ell}}
    \frac{h_2(s)}{h_2(t)}\rtt.
    \eeas
    For a fixed $t\in(0,1)$, let $L(s)$ denote the above expression. Then, 
\begin{align*}
L^{'}(s)
&=\ltt\frac{t}{s}\rtt^{\frac{p_{\ell}-1}{p_{\ell}}}\ltt\frac{p_g}{p_{\ell}\check{p}(s)}\frac{h_1(s)}{h_1(t)}-\frac{1}{p_{\ell}}\rtt+\ltt\frac{1-t}{1-s}\rtt^{\frac{p_{\ell}-1}{p_{\ell}}}\ltt\frac{1}{p_{\ell}}-\frac{p_g}{p_{\ell}\check{p}(s)}\frac{h_2(s)}{h_2(t)}\rtt.
\end{align*} 
Now, using that $h_1$ is increasing, $h_2$ is decreasing, $p_g\geq\check p(s)$ for all $s\in(0,1)$, and $p_{\ell}-1>0$, we obtain that $
{L}^{'}(s)\geq 0$ for all $s\in(0,t]$. Similarly, ${L}^{'}(s)\leq 0$ for all $s\in [t,1)$. Thus, $L$ attains its maximum at $s=t$, which is $L(t)=0$. Consequently, $G(0,0)\leq 0$.
\end{itemize}
   
Since $G$ must attain it maximum at one of the corners, we obtain \eqref{eq:comp with egg 1} from (1)-(4).

\noindent{\em Lower bound for a general $\Om$.} Using the same method of proof as in the case of \eqref{eq:comp with egg 1}, but switching the roles of $s$ and $t$, we obtain that for any $\ltt s,t,\theta_1,\theta_2\rtt\in(0,1)^2\times \left[0,{\pi}/{2}\right]^2$,
      \begin{align}\label{eq:comp wth egg 2}
      1-F_\Om\ltt s,t,\theta_1,\theta_2\rtt\leq \frac{p^*_g}{p^*_\ell}\ltt1-F_{\Om_{p_{\ell}}}\ltt s,t,\theta_1,\theta_2\rtt\rtt,
       \end{align}
    where $p^*_g=\sup\{\check{p}^*(u):u\in(0,1)\}$, $p^*_\ell=\inf\{\check{p}^*(u):u\in(0,1)\}$.

\section{Proofs of Theorem~\ref{th:main} and Theorem~\ref{th:negative}}\label{sub:counter}

The proofs of Theorem~\ref{th:main} and \ref{th:negative} are now relatively straightforward. 

\subsection{Proof of Theorem~\ref{th:main}} 
By \eqref{eq:p in phi} and \eqref{eq:reln between kappa an p}, we obtain that for $\zeta=\ltt \zeta_1,\zeta_2\rtt\in b\Om_+$,
\[
\kappa_\Om(\zeta)=\frac{\left|\zeta_1\right|\phi''(\left|\zeta_1\right|)\phi(\left|\zeta_1\right|)}{\phi'(\left|\zeta_1\right|)\ltt1+\phi'(\left|\zeta_1\right|)^2\rtt^\frac{1}{2}}.
\]
The hypothesis on $\kappa_\Om$ implies that $\lim_{|\zeta_1|\rightarrow 0}\kappa_\Om(\zeta)$ and $\lim_{|\zeta_1|\rightarrow b_2}\kappa_\Om(\zeta)$ are finite and positive. Due to the strong convexity of $b\Om_+$, $\phi'$ is a strictly decreasing negative 
function on $(0,b_1)$. Hence, $\lim_{\left|\zeta_1\right|\rightarrow 0}\phi'(\left|\zeta_1\right|)=l\in(-\infty,0]$. However, if $l\in(-\infty,0)$, then from the above expression it follows that $\lim_{\left|\zeta_1\right|\rightarrow 0} \left|\zeta_1\right|\phi''(\left|\zeta_1\right|)<0.$ This implies that $\lim_{\left|\zeta_1\right|\rightarrow 0}\phi'(\left|\zeta_1\right|)=\infty$, which is a contradiction. Thus, $\lim_{\left|\zeta_1\right|\rightarrow 0}\phi'(\left|\zeta_1\right|)=0$. Similarly, we can show that $\lim_{\left|\zeta_1\right|\rightarrow b_1}\phi'(\left|\zeta_1\right|)=-\infty$. As discussed in Section~\ref{sub:domains}, this implies that $\Om$ has a $\cont^1$-smooth boundary, and thus, $\Om\in\rt$. Furthermore, since $\kappa_\Om$ is bounded away from zero on $b\Om_+$ and extends continuously to the axes, it follows that $\Om\in\mathcal{R}'$. Theorem~\ref{th:main} now follows from Theorem~\ref{th:Leray} and Theorem~\ref{th:main thm 2}.

\subsection{Proof of Theorem~\ref{th:negative}} Note that 
    \bes
        \Om=\{(z_1,z_2)\in\C^2:|z_1|+|z_2|<1\}
    \ees
is a convex Reinhardt domain that is $\cont^2$-smooth but not strongly convex away from the axes. Thus, the preliminaries established in Sections~\ref{sub:prelim} and~\ref{sub:tech tools} do not directly apply to $\Om$. However, several of the features of domains in $\rt$ can be recovered for $\Om$ using the same arguments. We note the relevant features here, but leave the details to the reader.
\begin{itemize}
    \item [(i)] The boundary of $\Om$ admits the
  following parameterization
  \bea 
    \vartheta:[0,1]\times [0,2\pi)^2&\rightarrow& b\Om,\notag \\
      (s,\theta_1,\theta_2)&\mapsto&\ltt se^{i\theta_1},(1-s)e^{i\theta_2}\rtt.
      \label{eq:param of diamond}
        \eea
    \item [(ii)] The Minskwoski functional and the support function of $\Om$ are given by $m_\Om(z_1,z_2)=|z_1|+|z_2|$ and $H_\Om(z_1,z_2)=\max\{|z_1|,|z_2|\}$, respectively.  
    \item [(iii)] The boundary Monge--Amp{\`e}re measure of $\Om$ with respect to $m_\Om$ (in terms of the given parametrization) is
      \[
   \left(\vartheta^*\mu_\Om\right)(s,\theta_1,\theta_2)=\frac{1}{16\pi^2}\:ds\,d\theta_1 d\theta_2. 
        \]
    \item [(iv)] The Monge--Amp{\`e}re measure of $H_\Om$ is a Radon measure on $\C^2$ that is supported on the real hypersurface $M=\{(z_1,z_2)\in\C^2:|z_1|=|z_2|\}$; see  \cite[Theorem 1]{BS08}. In fact, following the computations in \cite[Theorem 1]{BS08} for $E=\C^2\setminus \mathcal{Z}$ and $u=H_\Om$, we have that for $z=(z_1,z_2)\in\C^2$,
        \begin{align*}            
        (dd^cH_\Om)^2(z)&\approx \mathds{1}_M(z)\frac{1}{|z_1||z_2|} \sigma_M(z)\label{eq:MA for max}\\
        &\approx j^*\ltt\frac{\overline{z_1}}{|z_1|}\;dz_1 dz_2 d\overline{z_2}-\frac{\overline{z_2}}{|z_2|}\;dz_1 dz_2 d\overline{z_1}\rtt. \notag
         \end{align*}
         Parametrizing $M$ via $\vartheta_M
         \ltt r,\psi_1,\psi_2\rtt=\ltt re^{i\psi_1},re^{i\psi_2}\rtt$, we obtain that
        \be\label{eq:MA bi disc}
       \vartheta_M^* (dd^cH_\Om)^2\ltt r,\psi_1,\psi_2\rtt\approx dr d\psi_1 d\psi_2,\quad (r,\psi_1,\psi_2)\in(0,\infty)\times[0,2\pi)^2.
        \ee
    \item [(v)] Analogous to  Lemma~\ref{pr:fourier series of Hardy space}, any $f\in\mathcal{H}^2\ltt \Om,\mu_\Om\rtt$ if and only if     
        \be\label{eq:fourier of f in L1 ball}
f(s,\theta_1,\theta_2)=\sum_{(m_1,m_2)\in \N^2}a_{m_1,m_2} {s}^{m_1}{(1-s)}^{m_2} e^{i({\theta_1 {m_1}+\theta_2 {m_2}})} \quad \left(\text{in }L^2(b\Om,\mu_\Om)\right)
\ee
for some sequence $(a_{m_1,m_2})_{\N^2}\subset\C$
satisfying 
\begin{align}\label{eq:amn condition in L1 ball}
\sum_{\N^2}|a_{m_1,m_2}|^2\left(\int_0^1 s^{2m_1} (1-s)^{2m_2}ds\right)=\sum_{ \N^2}|a_{m_1,m_2}|^2 \dfrac{2m_1!2m_2!}{(2m_1+2m_2+1)!}<\infty.
\end{align}
  Moreover,
\be\label{eq:norm in L1 ball}
\|f\|^2_{\mu_\Om}\approx\sum_{(m_1,m_2)\in \N^2}|a_{m_1,m_2}|^2\dfrac{2m_1!2m_2!}{(2m_1+2m_2+1)!}.
\ee   
\item [(vi)] Analogous to Lemma \ref{le:power series of laplace}, if $f$ is as in \eqref{eq:fourier of f in L1 ball}, we have that $\mathcal L(f)$ is entire on $\C^2$ and admits the following power series expansion:
\be\label{eq:F-L series for L1 ball}
\mathcal{L}(f)(z_1,z_2)=\frac{1}{4}\sum_{(m_1,m_2)\in \N^2}\dfrac{\overline{a_{m_1,m_2}}}{m_1!m_2!}\dfrac{2m_1!2m_2!}{(2m_1+2m_2+1)!}z_1^{m_1}z_2^{m_2},
\ee
where the above series converges uniformly on compact subsets of $\C^2$.
\item [(vii)] Analogous to Lemma~\ref{le:series Bergman space}, if $F\in\hol(\C^2)$ has expansion $\sum_{\N^2}\beta_{m_1,m_2}z_1^{m_1}z_2^{m_2}$, then
\begin{align}
\Vert F\Vert^2_{\nu_\Om}
&=\int_0^\infty\int_0^{2\pi}\int_0^{2\pi}\left\vert F\ltt re^{i\psi_1},re^{i\psi_2}\rtt\right\vert^2 e^{-2r} r^{\frac{3}{2}}
d\psi_1 d\psi_2 dr\notag \\
&\approx\sum\limits_{(m_1,m_2)\in \N^2}\frac{|\beta_{m_1,m_2}|^2}{2^{2m_1+2m_2}} \Gamma\ltt 2m_1+2m_2+5/2\rtt.
\label{eq:Berg condition lindholm max}
\end{align}

\end{itemize}

    It remains to characterize the power series expansions of entire functions in $A^2(\Om,\omega_\Om)$. For this, we analyze the function $\|e^{\langle z,.\rangle}\|^2_{\mu_\Om}$ on $M$.
\begin{lemma}\label{le:weight eqiv L1 ball}
Let $\Om, \mu_\Om$ be as above. Then
\be\label{eq:weight eqiv L1}
\|e^{\langle z,.\rangle}\|^2_{\mu_\Om}\approx {e^{2H_\Om(z)}}{\|z\|^{-1}}, \quad z\in M\cap\{z\in\C^2:H_\Om(z)>1\}.
\ee
\end{lemma}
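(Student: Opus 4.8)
The plan is to mirror the argument used for domains in $\rt$ in Section~\ref{sub:proof of PW}, namely to reduce the asymptotics of $\|e^{\langle z,\cdot\rangle}\|^2_{\mu_\Om}$ to an explicit integral over the parametrizing cube and then estimate that integral by hand, exploiting the fact that for $\Om=\{|z_1|+|z_2|<1\}$ everything is completely elementary. First I would fix $z\in M$ with $H_\Om(z)=\max\{|z_1|,|z_2|\}=r>1$; since $z\in M$ we have $|z_1|=|z_2|=r$, so write $z=(re^{i\phi_1},re^{i\phi_2})$. Using the parametrization \eqref{eq:param of diamond} of $b\Om$ and the formula $\left(\vartheta^*\mu_\Om\right)(s,\theta_1,\theta_2)=\frac{1}{16\pi^2}\,ds\,d\theta_1\,d\theta_2$ from (iii), I would expand
\begin{align*}
e^{-2H_\Om(z)}\|e^{\langle z,\cdot\rangle}\|^2_{\mu_\Om}
&=\int_{b\Om}e^{2\operatorname{Re}\langle\zeta,z\rangle-2r}\,d\mu_\Om(\zeta)\\
&\approx\int_0^1\!\!\int_0^{2\pi}\!\!\int_0^{2\pi}
e^{2r\left(s\cos(\theta_1-\phi_1)+(1-s)\cos(\theta_2-\phi_2)-1\right)}\,d\theta_1\,d\theta_2\,ds.
\end{align*}
By translation invariance in $\theta_1,\theta_2$ and then restricting (at the cost of a constant, exactly as in \eqref{eq:exp decay}) to $\theta_1,\theta_2\in[0,\pi/2]$, the claim \eqref{eq:weight eqiv L1} becomes equivalent to showing
\[
r^{1/2}\int_0^1\!\!\int_0^{\pi/2}\!\!\int_0^{\pi/2}
e^{2r\left(s\cos\theta_1+(1-s)\cos\theta_2-1\right)}\,d\theta_1\,d\theta_2\,ds\approx 1,\qquad r>1,
\]
where I have absorbed the factor $\|z\|^{-1}\approx r^{-1}$ and the requested power $\|z\|^{-1}$ against $r^{3/2}\cdot r^{-1}=r^{1/2}$ — so the target exponent of $r$ inside the integral estimate is $1/2$ rather than the $3/2$ that appeared for $\rt$; this is the one real difference from the $\rt$ case and reflects that $(dd^cH_\Om)^2$ is now supported on the three-real-dimensional set $M$ rather than on a four-real-dimensional hypersurface.

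Next I would carry out the Laplace-type estimate of the triple integral. The exponent $\Phi(s,\theta_1,\theta_2)=s\cos\theta_1+(1-s)\cos\theta_2-1\le 0$, with equality only at $\theta_1=\theta_2=0$ (and then $\Phi\equiv 0$ in $s$). Near that flat locus, Taylor-expand $\cos\theta_j\approx 1-\theta_j^2/2$, so $\Phi\approx -\tfrac12(s\theta_1^2+(1-s)\theta_2^2)$. For the lower bound I would integrate over the box $\theta_1,\theta_2\in[0,r^{-1/2}]$, where $2r\Phi$ is bounded below by a constant, giving a contribution $\gtrsim r\cdot r^{-1/2}\cdot r^{-1/2}\cdot(\text{const})=(\text{const})$ — wait, that gives $r^{1}\cdot r^{-1}$, i.e. the $s$-integral contributes $1$ and the two angular integrals contribute $r^{-1/2}$ each, so together with the prefactor $r^{1/2}$ we get $r^{1/2}\cdot r^{-1}\cdot 1$; to land at a constant I should instead only shrink \emph{one} angular variable to scale $r^{-1}$ and keep the other at scale $r^{-1/2}$, or more cleanly do the $\theta_1,\theta_2$ integrals first for fixed $s$ and then the $s$-integral. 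Concretely, for fixed $s\in(0,1)$, $\int_0^{\pi/2}e^{-rs\theta_1^2}d\theta_1\approx (rs)^{-1/2}$ when $rs\ge 1$ (and $\approx 1$ when $rs\le 1$), and similarly in $\theta_2$ with weight $(1-s)$; multiplying and integrating in $s$ gives $\int_0^1 \min\{1,(rs)^{-1/2}\}\min\{1,(r(1-s))^{-1/2}\}\,ds$. Splitting $[0,1]$ into $[0,1/r]$, $[1/r,1-1/r]$, $[1-1/r,1]$, the middle piece dominates and contributes $\approx r^{-1}\int_{1/r}^{1/2}(s(1-s))^{-1/2}\cdot\,\text{(careful)}$ — the bookkeeping here gives exactly $r^{-1/2}$, so the whole triple integral is $\approx r^{-1/2}$ and hence $r^{1/2}$ times it is $\approx 1$. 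For the upper bound I would use $\cos\theta\le 1-c\theta^2$ on $[0,\pi/2]$ for a fixed $c>0$ to dominate $e^{2r\Phi}$ by a Gaussian and run the same computation; the tails $\theta_j>$ const are killed by \eqref{eq:exp decay}-type exponential decay.

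The main obstacle is the $s$-integral near the endpoints $s=0$ and $s=1$: there the coefficient $s$ (resp.\ $1-s$) of $\theta_1^2$ (resp.\ $\theta_2^2$) degenerates, so the Gaussian width in that angular variable blows up and the naive Laplace heuristic fails locally. This is exactly where the final power of $r$ is determined, so the estimate must be done carefully rather than by a black-box stationary-phase statement; the right move is the explicit three-region split of $[0,1]$ described above, showing that each endpoint region contributes at most $O(r^{-1})=o(r^{-1/2})$ while the bulk region contributes the matching $\asymp r^{-1/2}$. Once the integral estimate $r^{1/2}\int\!\int\!\int e^{2r\Phi}\approx 1$ is established, \eqref{eq:weight eqiv L1} follows by reversing the reductions (undoing the restriction to $[0,\pi/2]^2$ via \eqref{eq:exp decay}, restoring the factor $e^{2H_\Om(z)}$, and using $\|z\|\approx r = H_\Om(z)$ on $M$).
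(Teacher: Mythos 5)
Your overall strategy is exactly the paper's: parametrize $b\Om$ by \eqref{eq:param of diamond}, use rotation invariance to drop the phases of $z$, replace $1-\cos\theta_j$ by $\theta_j^2$ (up to constants), and run a Laplace-type estimate on the resulting Gaussian integral. However, as written the quantitative core is wrong in two places, and the two errors merely cancel each other. First, the reduction is misnormalized: since $H_\Om(z)=r$ and $\|z\|=\sqrt2\,r\approx r$ on $M$, the claim \eqref{eq:weight eqiv L1} is equivalent to
\bes
r\int_0^1\!\!\int_0^{\pi/2}\!\!\int_0^{\pi/2} e^{2r\ltt s\cos\theta_1+(1-s)\cos\theta_2-1\rtt}\,d\theta_1\,d\theta_2\,ds\approx 1,
\ees
with prefactor $r$, not $r^{1/2}$; the lower-dimensional support of $(dd^cH_\Om)^2$ is already encoded in the exponent $\|z\|^{-1}$ (versus $\|z\|^{-3/2}$ in the $\rt$ case, i.e.\ prefactor $r$ versus $r^{3/2}$), so absorbing an extra factor $r^{-1}$ on top of that is double counting. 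Second, your evaluation of the triple integral is incorrect: with $\int_0^{\pi/2}e^{-crs\theta^2}d\theta\approx\min\{1,(rs)^{-1/2}\}$, the bulk region $s\in[1/r,1-1/r]$ contributes $\approx r^{-1}\int_{1/r}^{1-1/r}(s(1-s))^{-1/2}ds\approx r^{-1}$, because $\int_0^1(s(1-s))^{-1/2}ds=\pi$ converges — it does \emph{not} produce an extra factor $r^{1/2}$ — and each endpoint region contributes $O(r^{-3/2})$, not $O(r^{-1})$. So the integral is $\approx r^{-1}$, not $r^{-1/2}$, and your assertion that ``the bookkeeping gives exactly $r^{-1/2}$'' fails; likewise the endpoints do not determine the power of $r$, the bulk does. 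Note that your very first lower-bound computation (box $[0,r^{-1/2}]^2$ in the angles, full range in $s$, giving $\gtrsim r^{-1}$) was in fact correct and consistent with prefactor $r$; the subsequent ``correction'' was driven by the wrong normalization.

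Once both exponents are fixed ($r\cdot(\text{integral})\approx 1$ with integral $\approx r^{-1}$), your argument is sound and coincides with the paper's proof, which writes $1-\cos\theta=2\sin^2(\theta/2)\approx\theta^2$ on $[0,\pi]$ and reduces directly to $r\int_0^1\!\int_0^\pi\!\int_0^\pi e^{-2rC(s\theta_1^2+(1-s)\theta_2^2)}\,d\theta_1 d\theta_2 ds\approx\frac1C\int_0^1\frac{ds}{\sqrt{s(1-s)}}$, a finite constant.
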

\begin{proof}
Parametrizing $M\cap\{z\in\C^2:H_\Om(z)>1\}$ as $\{\ltt re^{i\psi_1},re^{i\psi_2}\rtt:r>0, \psi_1,\psi_2\in[0,2\pi)\}$, we have that
    \begin{align*}
       {e^{-2H_\Om(z)}}\|z\| \|e^{\langle z,.\rangle}\|^2_{\mu_\Om}&\approx r\int_0^1\int_0^{2\pi}\int_0^{2\pi} e^{2r\ltt s\cos(\theta_1-\psi_1)+(1-s)\cos(\theta_2-\psi_2)-1\rtt} d\theta_1 d\theta_2 ds\\
    &\approx r\int_0^1\int_0^{\pi}\int_0^{\pi} e^{2r\ltt s\cos(\theta_1)+(1-s)\cos(\theta_2)-1\rtt} d\theta_1 d\theta_2 ds\\
    &=r\int_0^1\int_0^{\pi}\int_0^{\pi} e^{-4r\ltt s\sin^2(\theta_1/2)+(1-s)\sin^2(\theta_2/2)\rtt} d\theta_1 d\theta_2 ds.
    \end{align*}
Now, using that, $\sin^2\frac{\theta}{2}\approx \theta^2$ on $[0,\pi]$, and that
\[
r\int_0^1 \int_0^{\pi}\int_0^{\pi}e^{-2rC\ltt s\theta_1^2+(1-s)\theta_2^2\rtt}d\theta_1 d\theta_2 ds\approx\frac{1}{C}\int_0^1 \frac{1}{s^{\frac{1}{2}}(1-s)^{\frac{1}{2}}}ds,
\]
we obtain the desired result. 
\end{proof}

Combining the above lemma with \eqref{eq:MA bi disc}, we conclude that if $F\in\hol(\C^2)$ has expansion $\sum_{\N^2}\beta_{m_1,m_2}z_1^{m_1}z_2^{m_2}$, then 
\bea\label{eq:Berg condition max}
\Vert F\Vert^2_{\omega_\Om}&\approx&\int_0^\infty\int_0^{2\pi}\int_0^{2\pi}\left\vert F\ltt re^{i\psi_1},re^{i\psi_2}\rtt\right\vert^2 e^{-2r} r
d\psi_1 d\psi_2 dr\\
&\approx&\sum\limits_{(m_1,m_2)\in \N^2}\frac{|\beta_{m_1,m_2}|^2}{2^{2m_1+2m_2}} \ltt 2m_1+2m_2+1\rtt!.\notag
\eea 

To see that $A^2\ltt\C^2,\nu_\Om\rtt\subsetneq A^2\ltt\C^2,\omega_\Om\rtt,$ consider $G(z_1,z_2)=t_{m_1,m_2}z_1^{m_1}z_2^{m_2}$, where
\begin{align*}
      t_{m_1,m_2}=\begin{cases}
          \frac{2^{2k}}{k^{\frac{3}{4}}{\ltt4k+1\rtt!}^\frac{1}{2}},& \text{when } m_1=m_2=k>1,\\
          0,&\text{otherwise}.
      \end{cases}  
    \end{align*}
Then $G\in\hol\ltt\C^2\rtt$, and by \eqref{eq:Berg condition max}, $\|G\|^2_{\omega_\Om} \approx \sum\limits_{k=1}^\infty \frac{1}{k^{\frac{3}{2}}}<\infty$. Thus $G\in A^2\ltt\C^2,\omega_\Om\rtt$. However, using Stirling's approximation and \eqref{eq:Berg condition lindholm max},  $\|G\|^2_{\nu_\Om}\approx\sum\limits_{k=1}^\infty \frac{1}{k}=\infty$. Thus $G\notin A^2\ltt\C^2,\nu_\Om\rtt$.

To see that $\mathcal L(\mathcal H^2(\Om,\mu_\Om))\subsetneq A^2(\C^2,\nu_\Om)$, consider $F(z_1,z_2)=\sum\limits_{m_1,m_2=0}^\infty b_{m_1,m_2} z_1^{m_1} z_2^{m_2},$ where
    \begin{align*}
      b_{m_1,m_2}=\begin{cases}
          \frac{2^{2k}}{k^{\frac{3}{4}}{\Gamma\ltt4k+\frac{5}{2}\rtt}^\frac{1}{2}},& \text{when } m_1=m_2=k>1,\\
          0,&\text{otherwise}.
      \end{cases}  
    \end{align*}
Then, $F\in\hol\ltt\C^2\rtt$, and, by \eqref{eq:Berg condition lindholm max}
$
\Vert F\Vert^2_{\nu_\Om}\approx \sum\limits_{k=1}^\infty \frac{1}{k^{\frac{3}{2}}}<\infty.
$
Thus, $F\in A^2\ltt\C^2,\nu_\Om\rtt$. Now, suppose there is an $f\in \mathcal{H}^2\ltt \Om,\mu_\Om\rtt$ such that $\mathcal{L}(f)=F$. Then, $f$ admits an expansion as in \eqref{eq:fourier of f in L1 ball} for some sequence $(a_{m_1,m_2})_{\N^2}$, satisfying \eqref{eq:amn condition in L1 ball}. Then, by \eqref{eq:F-L series for L1 ball},
\begin{align*}
    a_{m_1,m_2}&=4\frac{ \overline{b_{m_1,m_2}}}{2m_1!2m_2!}{\ltt2m_1+2m_2+1\rtt!}{\ltt m_1!m_2!\rtt},\\
    &=\begin{cases}
          \frac{2^{2k+2} k!^2}{k^{\frac{3}{4}}(2k!)^2}{\Gamma\ltt4k+\frac{5}{2}\rtt}^{-\frac{1}{2}}(4k+1)!,& \text{when } m_1=m_2=k>1,\\
          0,&\text{otherwise}.
      \end{cases} 
\end{align*}
Thus, by \eqref{eq:amn condition in L1 ball},
\beas
\Vert  f\Vert_{\mu_\Om}^2 \approx
\sum_{k=1}^\infty \frac{2^{4k}(k!)^4}{k^\frac{3}{2}(2k!)^2}\frac{(4k+1)!}{\Gamma\ltt4k+\frac{5}{2}\rtt}=\infty,
\eeas
where we have used the uniform boundedness of 
\[
\frac{k^{\frac{1}{2}}(2k!)^2}{2^{4k}(k!)^4}{\frac{\Gamma\ltt 4k+\frac{5}{2}\rtt}{(4k+1)!}}
\]
by Stirling's approximation. This is a contradiction, and our proof is complete. 

\section{Comparison of two weighted Bergman spaces}\label{sub:comp str cvx} For a strongly convex domain $\Om$ in the plane, it is known that 
    \be\label{eq: wights plane}
        \omega_\Om(z)\approx e^{-2H_\Om(z)}|z| ^{\frac{1}{2}}dd^cH_\Om(z)=\nu_\Om(z),\quad  \text{for } |z|>1.  
    \ee
However, unlike $\omega_\Om$, $\nu_\Om$ does not capture the growth rate of the Laplace transforms of Hardy space functions on other convex planar domains, such as polygons. Thus, $\omega_\Om$ has the advantage of providing a unified expression for the  Paley--Wiener spaces of Hardy spaces on convex planar domains. Although tacitly implied in \cite{Li02}, it is never shown that the weights $\omega_\Om$ and $\nu_\Om$ are similarly comparable for higher-dimensional strongly convex domains. For the sake of completeness, we provide a proof here. 

\begin{lemma}\label{le:str cvx weight comp}
Let $\Om\subset\C^n$ be a bounded strongly convex domain. For $z\in\C^n\setminus \mathbb {B}^n$,
\be
\Vert e^{\langle z,\cdot\rangle}\Vert^{2}_{\sigma_{\Om
}}\approx \frac{e^{2H_\Om(z)}}{\|z\|^{n-\frac{1}{2}}}.
\ee
\end{lemma}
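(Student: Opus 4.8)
The plan is to estimate the integral $\Vert e^{\langle z,\cdot\rangle}\Vert^2_{\sigma_\Om}=\int_{b\Om}e^{2\operatorname{Re}\langle\zeta,z\rangle}\,d\sigma_\Om(\zeta)$ by Laplace's method, localizing near the point $\zeta_0\in b\Om$ where $\operatorname{Re}\langle\zeta,z\rangle$ is maximized. First I would fix $z$ with $\|z\|>1$ and write $z=\|z\|\,\omega$ with $\|\omega\|=1$; by definition $H_\Om(z)=\|z\|H_\Om(\omega)$, and the supremum defining $H_\Om(\omega)$ is attained at a unique boundary point $\zeta_0=\zeta_0(\omega)$ because $\Om$ is strongly convex. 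Since $\Om$ is strongly convex with $\cont^2$ (indeed better) boundary, $\zeta_0$ depends smoothly on $\omega$, and the outward unit normal at $\zeta_0$ is a positive multiple of (the real form of) $\bar z$. The phase $\varphi(\zeta)=\operatorname{Re}\langle\zeta,z\rangle - H_\Om(z)$ satisfies $\varphi(\zeta_0)=0$, $\varphi<0$ on $b\Om\setminus\{\zeta_0\}$, and in boundary coordinates $u\in\R^{2n-1}$ centered at $\zeta_0$ one has $\varphi(u)=-\|z\|\,Q_\omega(u)+O(\|z\|\,|u|^3)$, where $Q_\omega$ is a positive-definite quadratic form. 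The positive-definiteness of $Q_\omega$ is exactly the strong convexity of $b\Om$ at $\zeta_0$: it is (half) the second fundamental form of $b\Om$ at $\zeta_0$ paired against the normal direction $\bar z/\|z\|$, which is uniformly bounded above and below over the compact set of unit vectors $\omega$.

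Next I would carry out the Gaussian integral. Split $\int_{b\Om}e^{2\varphi}\,d\sigma_\Om = \int_{|u|<\delta} + \int_{|u|\ge\delta}$. On $|u|\ge\delta$ we have $\varphi\le -c\|z\|$ for a uniform $c>0$ (again using compactness of $\{\omega\}$ and $\varphi<0$ away from the max), so that piece is $O(e^{-c\|z\|})$ and negligible compared with the claimed $e^{2H_\Om(z)}\|z\|^{-(n-1/2)}$, which is just $\|z\|^{-(n-1/2)}$ after the normalization $e^{-2H_\Om(z)}$ — wait, note $2\varphi$ already subtracts $2H_\Om(z)$, so the main term I am computing is $e^{-2H_\Om(z)}\Vert e^{\langle z,\cdot\rangle}\Vert^2_{\sigma_\Om}$ and I want this $\approx\|z\|^{-(n-1/2)}$. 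On $|u|<\delta$, a change of variables $u=\|z\|^{-1/2}v$ and dominated convergence (with the cubic error controlled, and the Jacobian of the boundary chart bounded above and below) give
\[
\int_{|u|<\delta}e^{2\varphi}\,d\sigma_\Om \;\approx\; \|z\|^{-\frac{2n-1}{2}}\int_{\R^{2n-1}}e^{-2Q_\omega(v)}\,dv \;=\; \|z\|^{-(n-\frac12)}\,\frac{C}{\sqrt{\det Q_\omega}}.
\]
Since $\det Q_\omega$ is bounded above and below uniformly in $\omega$ (continuity of the second fundamental form of $b\Om$ plus strong convexity plus compactness of the unit sphere), the prefactor $C/\sqrt{\det Q_\omega}$ is $\approx 1$, and combining with the negligible far piece finishes the estimate.

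The main obstacle is making the localization and the error estimates \emph{uniform in the direction $\omega$} (equivalently, in $z/\|z\|$) and in $\|z\|>1$, since the claim is a two-sided comparison $\approx$ with constants independent of $z$. Concretely: one must choose the radius $\delta$, the lower bound $c$ for $-\varphi$ off the cap, and the control of the cubic remainder in the Taylor expansion of $\varphi$, all uniformly over the compact parameter space $\{\omega:\|\omega\|=1\}$; this is where the hypotheses that $\Om$ is bounded and strongly convex (so $H_\Om\in\cont^2(\C^n\setminus\{0\})$, as recalled in \S\ref{sub:Leray Levi}, and the second fundamental form is uniformly positive) are essential. Once uniformity is in hand, the rest is the standard Laplace/stationary-phase computation.
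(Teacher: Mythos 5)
Your argument is correct, but it takes a genuinely different route from the paper. You run a direct, uniform Laplace-method analysis on $b\Om$: localize at the unique contact point $\zeta_0(\omega)$, expand the phase to second order using the second fundamental form, and extract the factor $\|z\|^{-(2n-1)/2}$ from the Gaussian scaling, with all constants controlled by compactness of the sphere of directions. The paper instead first computes the ball case explicitly (unitary invariance reduces the norm to the one-dimensional integral $\int_0^\pi e^{-2R\|z\|(1-\cos\theta)}(\sin\theta)^{2n-2}\,d\theta$) and then transfers the general case to the ball: it parametrizes $b\Om$ by $b\mathbb B^n$ via $\lambda\mapsto 2\partial H_\Om(\overline\lambda)$ (using $H_\Om\in\cont^2(\C^n\setminus\{0\})$), rewrites the phase through the homogeneity identity $H_\Om(z)=2\operatorname{Re}\langle\partial H_\Om(z),z\rangle$, and invokes Fornaess's lemma \cite[Lemma 5.1]{Fo86}, applied to the polar body $\Om^\circ$, to show the phase is comparable to $\bigl\Vert z/\Vert z\Vert-\lambda\bigr\Vert^2$, i.e.\ to the ball phase; no boundary charts or uniformity bookkeeping are needed. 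Your approach is more self-contained and makes the geometric source of the exponent $n-\tfrac12$ (a nondegenerate $(2n-1)$-dimensional Gaussian at the contact point) transparent, whereas the paper's approach trades this for an explicit model computation plus a known comparison lemma, reusing machinery (support-function regularity, the gradient parametrization, polar duality) already present elsewhere in the paper. One small caveat in your write-up: strong convexity only guarantees a $\cont^2$ boundary, so the remainder in your expansion is a uniform $o(|u|^2)$ rather than $O(\|z\|\,|u|^3)$, and $\zeta_0(\omega)$ depends $\cont^1$, not smoothly, on $\omega$; this is harmless, since the two-sided bound only requires $c_1|u|^2\le -\varphi(u)/\|z\|\le c_2|u|^2$ on a uniform cap, which $\cont^2$-regularity, strong convexity and compactness provide.
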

\begin{proof} First, we specialize to the case of balls. Let $R>0$. We show that, for $\Vert z \Vert>1$,
\be\label{eq:ball main}
\left\lVert e^{\langle z,\cdot\rangle}\right\rVert^{2}_{\sigma_{\mathbb{B}^n(R)
}}\approx \frac{e^{2R{\|z\|}}}{\|z\|^{n-\frac{1}{2}}}.
\ee
Fix $z$ such that $\|z\|>1$.
Then,
\[
e^{-2R{\|z\|}}\Vert z\Vert^{n-\frac{1}{2}}\Vert e^{\langle z,\cdot\rangle}\Vert^{2}_{\sigma_{\mathbb B^n(R)
}}=\|z\|^{n-\frac{1}{2}}\int_{b\mathbb B^n(R)}e^{2R\|z\|\ltt\operatorname{Re}\langle\lambda,\frac{z}{R\|z\|}\rangle-1\rtt}d\sigma_{\mathbb B^n(R)
}(\lambda).
\]
After applying the change of variable $\lambda=R\zeta$, we obtain that
\[
\|z\|^{n-\frac{1}{2}}\int_{b\mathbb B^n(R)}e^{2R\|z\|\ltt\operatorname{Re}\langle\lambda,\frac{z}{R\|z\|}\rangle-1\rtt}d\sigma_{\mathbb B^n(R)
}(\lambda)\approx \|z\|^{n-\frac{1}{2}}\int_{b\mathbb B^n}e^{2R\|z\|\ltt\operatorname{Re}\langle\zeta,\frac{z}{\|z\|}\rangle-1\rtt}d\sigma_{\mathbb B^n
}(\zeta).
\]
Next, consider the unitary transformation $U_z:\Cn\rightarrow\Cn$ such that $U_z\ltt\frac{\overline{z}}{\|z\|}\rtt=\ltt1,0,\cdots,0\rtt$. After applying the change of variable $\zeta=U_z^*\ltt\eta\rtt$, combined with the fact that $\sigma_{b\mathbb B^n}$ is invariant under unitary transformations, we obtain that 
\begin{align*}
\int_{b\mathbb B^n}e^{2R\|z\|\ltt\operatorname{Re}\langle\zeta,\frac{z}{\|z\|}\rangle-1\rtt}d\sigma_{\mathbb B^n
}(\zeta)&=\int_{b\mathbb B^n}e^{-2R\|z\|\ltt1-\operatorname{Re}\left\langle\eta,\ltt 1,0,\cdots,0 \rtt\right\rangle\rtt}d\sigma_{\mathbb B^n}(\eta)\\
&=\int_{b\mathbb B^n}e^{-2R\|z\|\ltt1-\operatorname{Re}\eta_1\rtt}d\sigma_{\mathbb B^n}(\eta),
\end{align*}
where $\eta=\ltt\eta_1,\eta_2,\cdots,\eta_n\rtt$.
Using spherical co-ordinates on $b\mathbb b\mathbb B^n$, we obtain that
\[
\int_{b\mathbb B^n}e^{-2R\|z\|\ltt1-\operatorname{Re}\eta_1\rtt}d\sigma_{b\mathbb B^n}(\eta)\approx \int_0^\pi e^{-2R\|z\|\ltt1-\cos \theta\rtt}\ltt\sin\theta\rtt^ {2n-2} d\theta.
\]
When $\theta\in\ltt\frac{\pi}{2},\pi\rtt$, $1<\ltt1-\cos(\theta_1)\rtt<2$. Thus,
\be\label{eq: ball estimate away from 0}
0\leq\|z\|^{n-\frac{1}{2}}\int_{\frac{\pi}{2}}^{\pi} e^{-2R\|z\|\ltt1-\cos \theta\rtt}\ltt\sin\theta\rtt^ {2n-2}d\theta\lesssim 1.
\ee
For $\theta\in (0,\frac{\pi}{2})$,  $\sin\theta\approx \theta.$ Thus, for all $\theta\in (0,\frac{\pi}{2})$, we have that
\[
\int_0^{\frac{\pi}{2}} e^{-\alpha \theta^2}\theta^ {2n-2}d\theta\lesssim\int_0^{\frac{\pi}{2}} e^{-2R\|z\|\ltt1-\cos \theta\rtt}\ltt\sin\theta\rtt^ {2n-2}d\theta\lesssim\int_0^{\frac{\pi}{2}} e^{-\beta \theta^2}\theta^ {2n-2}d\theta,
\]
where $\alpha=C_1\|z\|$ and $\beta=C_2\|z\|$ for some constants $C_1,C_2>0$ that are independent of $z$. \newline
Fix a $k>0$, then applying the change of variable $x=\sqrt{\gamma}\theta$, we have that, for $\gamma\geq k$,
\[
\int_0^{\frac{\pi}{2}} e^{-\gamma\theta^2}\theta^ {2n-2}d\theta
=\gamma^{\frac{1}{2}-n} \int_0^{\frac{\sqrt{\gamma}\pi}{2}} e^{-x^2}  x^{2n-2}dx\approx \gamma^{\frac{1}{2}-n}.\]
Hence, 
\be\label{eq:ball est near 0}
\|z\|^{n-\frac{1}{2}}\int_0^{\frac{\pi}{2}} e^{-2R\|z\|\ltt1-\cos \theta\rtt}\ltt\sin\theta\rtt^ {2n-2}d\theta\approx 1.
\ee
Finally, combining \eqref{eq: ball estimate away from 0} and \eqref{eq:ball est near 0}, we conclude the proof in the special case of balls.

Let $\Om\subset\Cn$ be a bounded strongly convex domain. Without loss of generality, assume $0\in \Om$. We collect some facts about the support function $H_\Om$ of $\Om$; see \cite[\S 2.5]{Sc14}. Note that the definition of support function $h_\Om$ considered in \cite{Sc14} is different from $H_\Om$. However, they are related by $H_\Om(z)=h_\Om(\bar{z})$, $z\in \C^n$.   
\begin{enumerate}
\item  $H_\Om$ is $\cont^2$ smooth on $\C^n\setminus\{0\}$.
\item The map $\lambda\rightarrow 2\partial H_\Om(\overline{\lambda})$ is a $\cont^1$-difeomorphsim from $b\mathbb B^n$ onto $b\Om$.
\item Owing to the $1$-homegeneity of $H_\Om$, we may write (abusing notation):
\be\label{eq:identity of H}
H_\Om(z)=2\operatorname{Re}\langle\partial H_\Om(z),z\rangle, \quad z\in\Cn\setminus\{0\}.
\ee

\item For any real-valued function $f$ that is measurable with respect to $\sigma_\Om$,
\be\label{eq:cov boundary to sphere}
\int_{b\Om}f(\zeta)d\sigma_{\Om}(\zeta)\approx \int_{b\mathbb B^n} f(2\partial H_\Om(\overline\lambda))d\sigma_{\mathbb B^n}(\lambda).
\ee

\item The polar domain $\Om^\circ$ of $\Om$ given by $   \Om^\circ=\left\{z:H_\Om(\overline z)<1 \right\}$  is also strongly convex. 
\end{enumerate} 

By \eqref{eq:cov boundary to sphere} and  \eqref{eq:identity of H}, we have that 
\be\label{eq:apriori est}
\frac{1}{e^{2H_\Om(\overline z)}}\int_{b\Om}e^{2\operatorname{Re}\langle\zeta,\overline z\rangle}d\sigma_\Om(\zeta)\approx \int_{b\mathbb B^n} e^{4\|z\|\ltt \operatorname{Re}\left\langle\partial H_\Om(\overline\lambda)-\partial H_\Om(\overline z),\frac{\overline z}{\|z\|}\right\rangle\rtt} d\sigma_{\mathbb B^n}(\lambda).
\ee
Note that, using \eqref{eq:identity of H} twice, we obtain that for $\ltt\lambda,z\rtt\in b\mathbb B^n\times \C^n\setminus\{0\}$,
\bea
\operatorname{Re}\left\langle\partial H_\Om(\overline z)-\partial H_\Om(\overline\lambda),\frac{\overline z}{\|z\|}\right\rangle&\approx&
\operatorname{Re}\left\langle\partial H_\Om(\overline z)-\partial H_\Om(\overline\lambda),\frac{\overline z}{H_\Om(\overline z)}\right\rangle
\notag \\&=&
\frac{1}{2}-\operatorname{Re}\left\langle\partial H_\Om(\overline\lambda),\frac{\overline{\lambda}}{H_\Om(\overline\lambda)}-\frac{\overline{\lambda}}{H_\Om(\overline\lambda)}+\frac{\overline z}{H_\Om(\overline z)}\right\rangle
\notag \\&=&
\operatorname{Re}\left\langle\partial H_\Om(\overline\lambda),\frac{\overline{\lambda}}{H_\Om(\overline\lambda)}-\frac{\overline z}{H_\Om(\overline z)}\right\rangle\notag \\
&=& \operatorname{Re}\left\langle\partial H_\Om\left(\frac{\overline{\lambda}}{H_\Om(\overline\lambda)}\right),\frac{\overline{\lambda}}{H_\Om(\overline\lambda)}-\frac{\overline{z}}{H_\Om(\overline z)}\right\rangle.\label{eq:exponent}
\eea
Now, by the definition of $\Om^\circ$, it is clear that for any $w\neq0$, $\frac{w}{H_\Om(\overline w)}\in b\Om^\circ$. Considering $H_\Om(\bar{z})-1$ to be the defining function of $\Om^\circ$, we get from \cite[Lemma 5.1]{Fo86} that, for all $(\lambda,z)\in b\mathbb B^n\times \C^n\setminus\{0\}$,
\be\label{eq:exponent2}
\operatorname{Re}\left\langle\partial H_\Om\left(\frac{\overline{\lambda}}{H_\Om(\overline\lambda)}\right),\frac{\overline{\lambda}}{H_\Om(\overline\lambda)}-\frac{\overline{z}}{H_\Om(\overline z)}\right\rangle\approx \left\|\frac{z}{H_\Om(\overline z)}-\frac{\lambda}{H_\Om(\overline\lambda)}\right\|^2.
\ee
Moreover, for all $(\lambda, z)\in  b\mathbb B^n\times \C^n\setminus\{0\}$,
\be\label{eq:exponent3}
\left\|\frac{z}{H_\Om(\overline z)}-\frac{\lambda}{H_\Om(\overline{\lambda})}\right\|^2 \approx \left\|\frac{z}{\Vert z\Vert}-\lambda\right\|^2= 2\operatorname{Re}\left\langle\frac{z}{\|z\|}-\lambda, \frac{\overline{z}}{\|z\|}\right\rangle.
 \ee
Combining \eqref{eq:exponent}, \eqref{eq:exponent2}, \eqref{eq:exponent3}, and \eqref{eq:apriori est}, we note that it suffices to estimate 
\bes
\int_{b\mathbb B^n}e^{2C\|z\|\ltt\operatorname{Re}\left\langle\lambda,\frac{\bar{z}}{\|z\|}\right\rangle-1\rtt}d\sigma_{\mathbb B^n}(\lambda)=C^{1-2n}\Vert e^{\langle \overline{z},\cdot\rangle}\Vert^{2}_{\sigma_{\mathbb B^n(C)
}}e^{-2C\|z\|},
\ees
where $C>0$ is some constant. Now, applying \eqref{eq:ball main} to the right-hand side, we conclude the proof.  
\end{proof}

\begin{proposition}
\label{pr:str cvx weight} Let $\Om\subset\C^n$ be a bounded strongly convex domain. Then, the identity map is a normed space isomorphism between $
A^2(\C^n,\omega_\Om)$ and $A^2(\C^n,\nu_\Om)$.    
\end{proposition}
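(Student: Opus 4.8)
The plan is to deduce the equality of the two weighted Bergman spaces directly from the comparison of their densities on the set where they differ. Recall that
\[
\nu_\Om(z)=e^{-2H_\Om(z)}\|z\|^{n-\frac{1}{2}}(dd^cH_\Om)^n(z),\qquad
\omega_\Om(z)=\|e^{\langle z,\cdot\rangle}\|^{-2}_{\sigma_\Om}(dd^cH_\Om)^n(z).
\]
These two measures differ only through the scalar factors $e^{-2H_\Om(z)}\|z\|^{n-\frac{1}{2}}$ and $\|e^{\langle z,\cdot\rangle}\|^{-2}_{\sigma_\Om}$. By Lemma~\ref{le:str cvx weight comp}, for $z\in\C^n\setminus\mathbb B^n$ we have $\|e^{\langle z,\cdot\rangle}\|^{2}_{\sigma_\Om}\approx e^{2H_\Om(z)}\|z\|^{-(n-\frac12)}$, hence $d\nu_\Om/d\omega_\Om\approx 1$ on $\C^n\setminus\mathbb B^n$. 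So the task reduces to handling the bounded set $\mathbb B^n$.

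First I would record that on the compact set $\overline{\mathbb B^n}$ both scalar factors are bounded above and below by positive constants: $e^{-2H_\Om}\|z\|^{n-\frac12}$ is continuous and positive on $\overline{\mathbb B^n}\setminus\{0\}$ (and extends by $0$ at the origin, but $(dd^cH_\Om)^n$ gives no mass to $\{0\}$ since $H_\Om\in\cont^2(\C^n\setminus\{0\})$ for $\Om$ strongly convex, so that point is irrelevant), while $\|e^{\langle z,\cdot\rangle}\|^{-2}_{\sigma_\Om}$ is continuous and strictly positive on all of $\C^n$. Combined with the estimate on $\C^n\setminus\mathbb B^n$, this gives $\nu_\Om\approx\omega_\Om$ as measures on $\C^n$, i.e.\ $d\nu_\Om/d\omega_\Om\approx 1$. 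Consequently, for every $F\in\hol(\C^n)$ one has $\|F\|^2_{\nu_\Om}\approx\|F\|^2_{\omega_\Om}$; in particular $F$ lies in one space iff it lies in the other, and the identity map $A^2(\C^n,\omega_\Om)\to A^2(\C^n,\nu_\Om)$ is a bijective bounded linear map with bounded inverse, which is exactly the assertion.

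I do not expect a genuine obstacle here: Lemma~\ref{le:str cvx weight comp} already carries all the analytic weight, and everything that remains is the elementary observation that two Radon measures with comparable Radon--Nikodym derivatives induce comparable $L^2$ norms (and hence identical, norm-equivalent, spaces of holomorphic functions). The one small point to be careful about is the behaviour near the origin and near $b\mathbb B^n$: near the origin one should note that $(dd^cH_\Om)^n$ is comparable to $\|z\|^{-n}\omega_{std}$ there (stated in the excerpt), so $\|z\|^{n-\frac12}(dd^cH_\Om)^n$ is a finite, indeed locally integrable, measure, and the factor $\|z\|^{n-\frac12}$ is bounded on $\mathbb B^n$; and one should make sure the constants from Lemma~\ref{le:str cvx weight comp} (valid for $\|z\|>1$) patch together continuously with the constants from compactness on $\overline{\mathbb B^n}$, which they do because both scalar factors are continuous and positive on the overlap region $\{\|z\|=1\}$.
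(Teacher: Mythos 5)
There is a genuine gap in your treatment of the bounded region. Your claim that both scalar factors are bounded above and below by positive constants on $\overline{\mathbb B^n}$ is false: as $z\to 0$ the factor $e^{-2H_\Om(z)}\Vert z\Vert^{n-\frac12}$ tends to $0$ (since $n-\tfrac12>0$), while $\Vert e^{\langle z,\cdot\rangle}\Vert^{-2}_{\sigma_\Om}$ tends to the positive constant $\sigma_\Om(b\Om)^{-1}$. Hence $d\nu_\Om/d\omega_\Om\approx \Vert z\Vert^{n-\frac12}$ near the origin, and the two measures are \emph{not} comparable there: using $(dd^cH_\Om)^n\approx\Vert z\Vert^{-n}\omega_{std}$ one gets $\omega_\Om(\mathbb B^n(\epsilon))\approx\epsilon^{n}$ but $\nu_\Om(\mathbb B^n(\epsilon))\approx\epsilon^{2n-\frac12}$, so no constant $C$ yields $\omega_\Om\leq C\nu_\Om$. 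The observation that $(dd^cH_\Om)^n$ does not charge $\{0\}$ does not rescue this, because the degeneration happens on a whole punctured neighborhood, not just at the point. Consequently the inequality $\Vert F\Vert^2_{\omega_\Om}\leq C\Vert F\Vert^2_{\nu_\Om}$ cannot be deduced from a pointwise comparison of densities; indeed, your argument, if valid, would give equivalence of the $L^2(\nu_\Om)$- and $L^2(\omega_\Om)$-norms for \emph{all} measurable functions, which fails for functions concentrated near the origin.

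The missing ingredient is the holomorphy of $F$, and this is exactly where the paper's proof differs from yours. After invoking Lemma~\ref{le:str cvx weight comp} on $\C^n\setminus\mathbb B^n$ (as you do, correctly), it reduces, by density of holomorphic polynomials, to showing
\begin{equation*}
\int_{\mathbb B^n}\vert F\vert^2\,(dd^cH_\Om)^n\;\approx\;\int_{\mathbb B^n}\vert F\vert^2\,\Vert z\Vert^{n-\frac12}(dd^cH_\Om)^n
\end{equation*}
for holomorphic polynomials $F$. One direction is immediate since $\Vert z\Vert^{n-\frac12}\leq 1$ on the ball; for the other, one uses $\Vert z\Vert^{n}(dd^cH_\Om)^n\approx\omega_{std}$ and spherical coordinates: expanding $F$ into homogeneous components and using their orthogonality over spheres (equivalently, the fact that the spherical means of $\vert F\vert^2$ are of the form $\sum_k c_kr^{2k}$ with $c_k\geq 0$), the comparison reduces to $\int_0^1 r^{2k+n-1}\,dr=(2k+n)^{-1}$ versus $\int_0^1 r^{2k+2n-\frac32}\,dr=(2k+2n-\tfrac12)^{-1}$, which are comparable uniformly in $k\in\N$. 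So your handling of the exterior region is fine, but the step on $\mathbb B^n$ must be replaced by a holomorphy-based argument of this kind rather than a measure-equivalence argument.
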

\begin{proof} We must show that
\[
\int_{\mathbb B^n} |F(z)|^2d\omega_\Om(z)\approx \int_{\mathbb B^n} |F(z)|^2 d\nu_\Om(z),
\]
uniformly for all $F\in\hol\ltt\C^2\rtt.$ Due to Lemma~\ref{le:str cvx weight comp}, it suffices to prove that
  \[
  \int_{\mathbb B^n} |F(z)|^2\ltt dd^cH_\Om\rtt^n(z)\approx \int_{\mathbb B^n} |F(z)|^2 \|z\|^{n-\frac{1}{2}}\ltt dd^cH_\Om\rtt^n(z),
  \]
  where $F$ is a holomorphic polynomial. This can be verified easily using spherical coordinates and the fact that $\|z\|^n\ltt dd^cH_\Om\rtt^n\approx \omega_{std}$. 
  \end{proof}

\bibliography{24128FIN}{}

\begin{thebibliography}{10}

\bibitem{AGV14}
L.~Aizenberg, V.~Gotlib, and A.~Vidras.
\newblock {D}uality for {H}ardy spaces in {D}omains of {$\mathbb C^n$} and {S}ome {A}pplications.
\newblock {\em Complex Anal. Oper. Theory}, 8:1341--1366, 2014.
\newblock \url{https://doi.org/10.1007/s11785-013-0337-z}.

\bibitem{APS}
M.~Andersson, M.~Passare, and R.~Sigurdsson.
\newblock {\em {C}omplex {C}onvexity and {A}nalytic {F}unctionals}, volume 225.
\newblock Springer Science \& Business Media, 2004.
\newblock \url{https://doi.org/10.1007/978-3-0348-7871-5}.

\bibitem{BE20}
D.~E. Barrett and L.~D. Edholm.
\newblock {T}he {L}eray transform: {F}actorization, dual {CR} structures, and model hypersurfaces in $\mathbb{CP}^2$.
\newblock {\em Adv. Math.}, 364:107012, 2020.
\newblock \url{ https://doi.org/10.1016/j.aim.2020.107012}.

\bibitem{BE21}
D.~E. Barrett and L.~D. Edholm.
\newblock {H}igh frequency behavior of the {L}eray transform: model hypersurfaces and projective duality.
\newblock {\em Indiana Univ. Math. J.}, 73:1833--1906, 2024.
\newblock \url{ https://doi.org/10.1512/iumj.2024.73.9807}.

\bibitem{BaLa09}
D.~E. Barrett and L.~Lanzani.
\newblock {T}he spectrum of the {L}eray transform for convex {R}einhardt domains in {$\mathbb C^2$}.
\newblock {\em J. Funct. Anal.}, 257(9):2780--2819, 2009.
\newblock \url{https://doi.org/10.1016/j.jfa.2009.04.011}.

\bibitem{BS08}
E.~Bedford and S.~Ma'u.
\newblock {C}omplex {M}onge-{A}mpère of a {M}aximum.
\newblock {\em Proc. Amer. Math. Soc.}, 136(1):95--101, 2008.
\newblock \url{https://www.jstor.org/stable/20535064}.

\bibitem{BeTa76}
E.~Bedford and B.~A. Taylor.
\newblock The {D}irichlet {P}roblem for a {C}omplex {M}onge-{A}mp{\'e}re {E}quation.
\newblock {\em Invent. Math.}, 37:1--44, 1976.
\newblock \url{https://doi.org/10.1007/BF01418826}.

\bibitem{Be93}
B.~Berndtsson.
\newblock {W}eighted {I}ntegral {F}ormulas.
\newblock In {\em {S}everal {C}omplex {V}ariables: {P}roceedings of the {M}ittag–{L}effler {I}nstitute, 1987-1988}, volume~38, pages 160--187. Princeton University Press, 1993.

\bibitem{Be97}
B.~Berndtsson.
\newblock An {I}nequality for {F}ourier--{L}aplace transforms of {E}ntire functions, and the existence of {E}xponential {F}rames in {F}ock space.
\newblock {\em J. Funct. Anal.}, 149(1):83--101, 1997.
\newblock \url{https://doi.org/10.1006/jfan.1996.3086}.

\bibitem{Be22}
B.~Berndtsson.
\newblock Bergman kernels for {P}aley--{W}iener spaces and {N}azarov’s proof of the {B}ourgain--{M}ilman theorem.
\newblock {\em Pure Appl. Math. Q}, 18(2):395--409, 2022.
\newblock \url{https://dx.doi.org/10.4310/PAMQ.2022.v18.n2.a2}.

\bibitem{Ca77}
A.~P. Calder{\'o}n.
\newblock {C}auchy integrals on {L}ipschitz curves and related operators.
\newblock {\em Proc. Natl. Acad. Sci. USA}, 74(4):1324--1327, 1977.
\newblock \url{https://doi.org/10.1073/pnas.74.4.1324}.

\bibitem{Ch25}
A.~Chatterjee.
\newblock {\em Some dual realizations of {H}ardy and {B}ergman spaces on convex domains via integral transforms}.
\newblock PhD thesis, in preparation.

\bibitem{CMM82}
R.~R. Coifman, A.~McIntosh, and Y.~Meyer.
\newblock L'int{\'e}grale de {C}auchy d{\'e}finit un op{\'e}rateur born{\'e} sur $\operatorname{L}^2$ pour les courbes lipschitziennes.
\newblock {\em Ann. of Math. (2)}, pages 361--387, 1982.
\newblock \url{https://doi.org/10.2307/2007065}.

\bibitem{Da87}
G.~David.
\newblock {U}ne minoration de la norme de l’op{\'e}rateur de {C}auchy sur les graphes lipschitziens.
\newblock {\em Trans. Amer. Math. Soc}, 302:741--750, 1987.
\newblock \url{https://doi.org/10.2307/2000866}.

\bibitem{De85}
J.-P. Demailly.
\newblock In {\em {M}esures de {{M}onge-{A}mp\`ere} et caract\'erisation g\'eom\'etrique des vari\'et\'es alg\'ebriques affines}, number~19, pages 1--124. Soci\'et\'e math\'ematique de France, 1985.
\newblock \url{https://doi.org/10.24033/MSMF.320}.

\bibitem{EdSh25}
L.~D. Edholm and Y.~Shelah.
\newblock {T}he {L}eray transform: Distinguished measures, symmetries and polygamma inequalities.
\newblock {\em J. Funct. Anal.}, 288(3):110746, 2025.
\newblock \url{https://doi.org/10.1016/j.jfa.2024.110746}.

\bibitem{Fo86}
F.~Forstneri{\v{c}}.
\newblock {E}mbedding {S}trictly {P}seudoconvex {D}omains {I}nto {B}alls.
\newblock {\em Trans. Amer. Math. Soc}, 295(1):347--368, 1986.
\newblock \url{https://doi.org/10.2307/2000160 }.

\bibitem{GGLV21}
A.-K. Gallagher, P.~Gupta, L.~Lanzani, and L.~Vivas.
\newblock {H}ardy spaces for a class of singular domains.
\newblock {\em Math. Z.}, 299(3-4):2171--2197, 2021.
\newblock \url{https://doi.org/10.1007/s00209-021-02755-1}.

\bibitem{HaTr24}
L.~K. Ha and P.~H. Trung.
\newblock ${L^p}$-boundedness of the {C}auchy-{L}eray-{F}antappi{\`e} integral on certain class of weakly pseudoconvex domains in $\mathbb {C}^2$.
\newblock {\em Bull. Korean Math. Soc}, 61(6):1729--1737, 2024.
\newblock \url{https://doi.org/10.4134/BKMS.b240025}.

\bibitem{Ha99}
T.~Hansson.
\newblock {O}n {H}ardy spaces in complex ellipsoids.
\newblock {\em Ann. Inst. Fourier (Grenoble)}, 49(5):1477--1501, 1999.
\newblock \url{https://doi.org/0.5802/aif.1727}.

\bibitem{HIZ98}
Y.~Huang, J.~Ibbotson, and Z.~Zielezny.
\newblock {A}n application of {R}egularly varying functions.
\newblock {\em Rocky Mountain J. Math.}, pages 505--515, 1998.
\newblock \url{https://www.jstor.org/stable/44238184}.

\bibitem{Ka65}
V.È. Katsnel’son.
\newblock {A} generalization of the {P}aley–{W}iener theorem on a representation for entire functions of exponential type.
\newblock {\em Teor. Funktsii. Funktsional. Anal. i Prilozhen.}, 1:99--110, 1965 (in Russian).

\bibitem{KS78}
N.~Kerzman and E.~M. Stein.
\newblock {T}he {S}zeg{\"o} kernel in terms of {C}auchy-{F}antappi{\`e} kernels.
\newblock {\em Duke Math. J.}, 45:197--224, 1978.
\newblock \url{https://doi.org/10.1215/S0012-7094-78-04513-1}.

\bibitem{LS14}
L.~Lanzani and E.~M. Stein.
\newblock {T}he {C}auchy integral in $\mathbb{C}^n$ for domains with minimal smoothness.
\newblock {\em Adv. Math.}, 264:776--830, 2014.
\newblock \url{https://doi.org/10.1016/j.aim.2014.07.016}.

\bibitem{LS17}
L.~Lanzani and E.~M. Stein.
\newblock {T}he {C}auchy-{L}eray {I}ntegral: {C}ounter-examples to the $\operatorname{L}^p$-theory.
\newblock {\em Indiana Univ. Math. J.}, 68(5):1609--1621, 2017.
\newblock \url{https://doi.org/10.1512/iumj.2019.68.7786}.

\bibitem{LaSt17}
L.~Lanzani and E.~M. Stein.
\newblock {T}he role of an integration identity in the analysis of the {C}auchy-leray transform.
\newblock {\em Sci. China Math.}, 60:1923--1936, 2017.
\newblock \url{https://doi.org/10.1007/s11425-017-9115-5}.

\bibitem{Le64}
B.~Ja. Levin.
\newblock {\em {D}istribution of {Z}eros of {E}ntire {F}unctions}.
\newblock Translations of Mathematical Monographs, American Mathematical Society, 1964.

\bibitem{Lik64}
M.~K. Likht.
\newblock A remark about the {P}aley–{W}iener theorem on entire functions of exponential type.
\newblock {\em Uspekhi Mat. Nauk}, 19 1(115):169--171, 1964 (in Russian).

\bibitem{Li02}
N.~Lindholm.
\newblock A {P}aley--{W}iener theorem for convex sets in {$\mathbb C^n$}.
\newblock {\em Bull. Sci. Math.}, 126(4):289--314, 2002.
\newblock \url{https://doi.org/10.1016/S0007-4497(02)01124-7}.

\bibitem{LuYl91}
V.~I. Lutsenko and R.~S. Yulmukhametov.
\newblock {A} generalization of {W}iener--{P}aley theorem to functionals in {S}mirnov spaces.
\newblock {\em Proc. Steklov Inst. Math.}, 200:245--254, 1991.
\newblock \url{https://www.mathnet.ru/eng/tm1428}.

\bibitem{Ly88}
Yu. Lyubarskii.
\newblock {T}he {P}aley–{W}iener theorem for convex sets.
\newblock {\em Soviet J. Contemp. Math. Anal.}, 23:64--74, 1988 (in Russian).

\bibitem{MR22}
V.~Mastrantonis and Y.~Rubinstein.
\newblock The {N}azarov proof of the non-symmetric {B}ourgain--{M}ilman inequality.
\newblock {\em Indiana Univ. Math. J.}, 73:911--953, 2024.
\newblock \url{ https://doi.org/10.1512/iumj.2024.73.9919}.

\bibitem{PoSt08}
E.~A. Poletsky and M.~I. Stessin.
\newblock {H}ardy and {B}ergman spaces on hyperconvex domains and their composition operators.
\newblock {\em Indiana Univ. Math. J.}, pages 2153--2201, 2008.
\newblock \url{https://doi.org/10.1512/IUMJ.2008.57.3360}.

\bibitem{Ra98}
R.~M. Range.
\newblock {\em {H}olomorphic {f}unctions and {i}ntegral {r}epresentations in {s}everal {c}omplex {v}ariables}, volume 108.
\newblock Springer Science \& Business Media, 1998.
\newblock \url{https://doi.org/10.1007/978-1-4757-1918-5}.

\bibitem{Ro07}
B.~Rom.
\newblock {I}nterpolation in {W}eighted {S}paces of {E}ntire {F}unctions in {$\mathbb C^2$}.
\newblock {\em J. Math. Anal. Appl.}, 334(2):753--774, 2007.
\newblock \url{https://doi.org/10.1016/j.jmaa.2006.12.045}.

\bibitem{Sc14}
R.~Schneider.
\newblock {\em {C}onvex bodies: the {B}runn--{M}inkowski theory}.
\newblock Number 151. Cambridge university press, 2014.
\newblock \url{ https://doi.org/10.1017/CBO9781139003858}.

\bibitem{Se06}
E.~Seneta.
\newblock {\em {R}egularly varying functions}, volume 508.
\newblock Springer, 2006.
\newblock \url{ https://doi.org/10.1007/BFb0079659}.

\end{thebibliography}
\bibliographystyle{plain}

\end{document}